\DeclareMathOperator{\dom}{Dom}
\newtheorem{proposition}{Proposition}
\newtheorem{theorem}{Theorem}
\newtheorem{lemma}{Lemma}
\theoremstyle{definition}
\newtheorem{remark}{Remark}
\newtheorem{example}{Example}
\newtheorem{definition}{Definition}
\newcommand{\R}{\mathbb R} 
\newcommand{\C}{\mathbb C} 
\newcommand{\N}{\mathbb N} 
\newcommand{\Z}{\mathbb Z} 
\newcommand{\M}[1]{\ensuremath{\left\Vert \,#1\, \right\Vert}}
\newcommand{\rd}[1]{\ensuremath{\left( #1 \right)}}
\newcommand{\h}[1]{\mathcal{#1}}
\newcommand{\hi}{\mathcal{H}} 
\newcommand{\vi}{\mathcal{V}} 
\newcommand{\lh}{\mathcal{L(H)}} 
\newcommand{\sv}{\mathcal{S(V)}} 
\newcommand{\ip}[2]{\left\langle\,#1|#2\,\right\rangle} 
\newcommand{\ran}{\textrm{ran}} 
\newcommand{\vp}{\varphi}
\def\<{\langle}
\def\>{\rangle}
\newcommand{\ang}[2]{\Braket{#1 | #2}}
\newcommand{\SSet}[2]{\Set{#1 \, | \, #2}}
\renewcommand{\l}{\lambda}
\newcommand{\no}[1]{\left\|#1\right\|} 
\newcommand{\bo}[1]{\mathcal{B}\left(#1\right)} 
\newcommand{\E}{\mathsf{E}}
\newcommand{\F}{\mathsf{F}}
\newcommand{\Qo}{\mathsf{Q}}
\newcommand{\Po}{\mathsf{P}}
\newcommand{\LH}{\mathcal{L}\big(L^2(\R)\big)}
\newcommand{\LHH}{\mathcal{L}\big(L^2(\cal I)\big)}
\newcommand{\strgdom}[2]{D({#1},{#2})}
\newcommand{\formset}[2]{\cal W({#1},{#2})}
\newcommand{\formsetna}{\formset{f}{\E}}
\newcommand{\predom}[3]{\cal W_{#3}({#1},{#2})}
\newcommand{\predomna}[1]{\predom{f}{\E}{#1}}
\newcommand{\ga}[2]{\Gamma({#1},{#2})}
\newcommand{\choices}[2]{\cal C({#1},{#2})}
\newcommand{\weakops}[3]{\cal L_W({#1},{#2},{#3})}
\newcommand{\gac}[3]{\Gamma_c({#1},{#2},{#3})}
\newcommand{\preop}[3]{G({#1},{#2},{#3})}
\newcommand{\weakD}[3]{\hat{D}_{#3}({#1},{#2})}
\newcommand{\weakop}[3]{\hat{L}_{#3}({#1},{#2})}
\newcommand{\weaksymD}[3]{D'_{#3}({#1},{#2})}
\newcommand{\weaksymop}[3]{L'_{#3}({#1},{#2})}
\newcommand{\sqd}[2]{\tilde{D}({#1},{#2})}
 \newcommand{\sqdop}[2]{\tilde{L}({#1},{#2})}
\newcommand{\strgop}[2]{L({#1},{#2})}
\newcommand{\formdom}[2]{\cal D_F({#1},{#2})}
\newcommand{\formdomna}{\formdom{f}{\E}}
\newcommand{\mweaksymop}[2]{L'({#1},{#2})}
\newcommand{\form}[2]{F_{{#1},{#2}}}
\newcommand{\formna}{\form{f}{\E}}
\newcommand{\cal}{\mathcal}
\newcommand{\CHI}[1]{\ensuremath{ \chi\raisebox{-1ex}{$\scriptstyle #1$} }}
\newlength{\PM@dpth}\newlength{\PM@hght}\newlength{\PM@wdth}
\newcommand{\clx}[1]{\settodepth{\PM@dpth}{$\underline{a}$}\settoheight{\PM@hght}{$#1$}%
\addtolength{\PM@hght}{\PM@dpth}\settowidth{\PM@wdth}{$#1$}\makebox[0pt][l]{\rule[\PM@hght]%
{\PM@wdth}{\fboxrule}}#1} \makeatother
\begin{document}
\title{Operator integrals and sesquilinear forms}
\author{D.\ A.\ Dubin}
\address{Department of Pure Mathematics, The Open University, Walton Hall, Milton Keynes, MK7 6AA, U.K.}
\email{dadubin13@gmail.com}
\author{J.\ Kiukas}
\address{Zentrum Mathematik, M5, Technische Universit\"at M\"unchen, Boltzmannstrasse 3, 85748 Garching, Germany}
\email{jukka.kiukas@tum.de}
\author{J.-P.\ Pellonp\"a\"a}
\address{Department of Physics and Astronomy, University of Turku, FI-20014 Turku, Finland}  \email{juhpello@utu.fi}
\author{K.\ Ylinen}
\address{Department of Mathematics and Statistics, University of Turku, FI-20014 Turku, Finland}
\email{ylinen@utu.fi}
\begin{abstract} We consider various systematic ways of defining unbounded operator valued integrals of complex functions with respect to (mostly)
 positive operator measures and positive sesquilinear form  measures, and investigate their relationships to each other in view of the extension theory of symmetric operators.
We demonstrate the associated mathematical subtleties with a
physically relevant example involving moment operators of the momentum observable of a particle confined to move on a bounded interval.
\end{abstract}

\maketitle

\

\noindent \begin{small} {\bf Keywords: } vector measure; operator measure; operator integral; sesquilinear form; quantum observable\end{small}

\section{Introduction}

Selfadjoint operators represent observables in the
traditional (von Neumann) description of quantum mechanics when a quantum system is associated with a Hilbert space $\mathcal{H}$.
By the spectral theorem, selfadjoint operators $A$  in $\mathcal{H}$ are in a bijective correspondence with spectral measures (normalized projection valued measures) $\mathsf{E}:\mathcal{B}\left( \mathbb{R} \right)\to
\mathcal{L(H)}$ where $\cal B(\R)$ is the Borel $\sigma$-algebra of the real line $\R$ and $\lh$ is the space of bounded operators on $\hi$.
 The correspondence in the spectral theorem can be written as an \emph{operator integral}, in the form $A=\int x\, d{\mathsf E}(x)$. More specifically, if $\E:\bo\R\to \lh$ is a normalized projection valued measure, and $f:\R\to \R$ a Borel
measurable (possibly unbounded) function, there exists a unique operator, denoted $\int f\, d\E$, such that its domain
\begin{equation}\label{sqint}
\dom\left(\int f\, d\E\right) = \left\{ \vp\in \hi \,\bigg| \int |f(x)|^2\, d\E_{\vp,\vp}(x) <\infty\right\}
\end{equation}
is dense, and, for all $\psi\in \hi$, $\vp\in \dom(\int f \, d\E)$,
\begin{equation}\label{opintegral}
\left\langle \psi\,\bigg|\left(\int f\, d\E\right)\vp\right\rangle = \int f(x)\, d\E_{\psi,\vp}(x)
\end{equation}
where $\E_{\psi,\vp}(X):=\langle\psi|\E(X)\vp\rangle$, $X\in\cal B(\R)$.
 This operator is selfadjoint and
 its spectral measure is
$X\mapsto \E\big(f^{-1}(X)\big)$. In addition, $\|(\int f \, d\E)\vp\|^2 = \int |f(x)|^2 \, d\E_{\vp,\vp}(x)$, consistent with  the feature
that the domain  consists of exactly those vectors for which the integral of the \emph{square} of $f$ is finite.

However, from the operational point of view of quantum measurement theory, this definition is often considered too restrictive: in standard modern quantum theory (in particular, quantum information theory), a generalization to (normalized) positive operator (valued) measures is used instead.
 A physical consequence is that a positive operator measure (POVM) which is not a projection valued measure (PVM) will, in particular, allow some imperfections of measurement.

Going from projection valued measures to general positive operator measures, some useful features of the theory are lost, most notably the spectral theorem and functional calculus. However, some ideas of spectral theory may be retained: According to Naimark's dilation theory, as given e.g.\ in \cite{riesz} or \cite{Akhiezer}, for any densely defined
symmetric operator $A$ in $\hi$ there exists a normalized POVM $\E:\h B(\R)\to \h \lh $, having the properties
\begin{equation}\label{symexpect}
\langle \psi | A\vp\rangle = \int x \, d\E_{\psi,\vp}(x), \; \; \psi\in \hi, \; \vp\in \dom(A),
\end{equation}
and
\begin{equation}\label{symnorm}
\| A\vp\|^2 = \int x^2 \, d\E_{\vp,\vp}(x), \; \; \vp\in \dom(A).
\end{equation}
However, unlike the case of spectral measures, the domain of $A$ need not coincide with the set of vectors for which the integral in \eqref{symnorm} is finite.
Moreover, the correspondence does not work the other way: not every POVM $\E:\h B(\R)\to \h \lh $
satisfies \eqref{symexpect} and \eqref{symnorm} with respect to some symmetric operator $A$. This has been noted in the
above references, along with the fact that the integral in the right hand side of \eqref{symnorm} may well be infinite for
\emph{any} nonzero vector $\vp$. Moreover, a normalized POVM corresponding to a symmetric operator $A$ as above is unique only if $A$ is
maximally symmetric (i.e.\ has no proper symmetric extension) \cite{Akhiezer}.

For these reasons, going from a POVM to a symmetric operator is not straightforward and choosing  a reasonable definition for the operator integral
$\int f \, d\E$ (including its domain) is problematic -- except when $f$ is bounded, in which case the domain is all of $\hi$.

In fact, the difficulties in choosing the domain have led the authors in \cite[p.\ 132]{Akhiezer} to consider $\int x \, d\E(x)$ in a symbolic sense only, as a shorthand for the
equations \eqref{symexpect} and \eqref{symnorm}, provided they hold for the given POVM.
As pointed out by Werner \cite{WernerScreen}, however, even the general
operator integral $\int f\, d\E$ can  be uniquely defined as a symmetric operator on the domain \eqref{sqint},
 so that
\eqref{opintegral} holds, in contrast to what appears to be intended in \cite[p.\ 132]{Akhiezer}.
(See the above paper by Werner, and also \cite{Lahti}.)
 The reason why this does not contradict the
observation that not every POVM satisfies \eqref{symexpect} and \eqref{symnorm} for some symmetric operator, is simply that
\eqref{symnorm} does not hold for $A= \int x\, d\E(x)$, in general.

When \eqref{symnorm} holds, with \eqref{sqint} dense, the POVM
is called \emph{variance free} \cite{Werner}. For a general POVM it may be the case that only the inequality
\begin{equation}\label{opintineq}
\left\|\left(\int f\, d\E\right)\vp\right\|^2\leq \int |f(x)|^2\, d\E_{\vp,\vp}
\end{equation}
holds. The domain of \eqref{sqint} has a physical meaning as the set of those
vector states for which the measurement distribution has finite variance. For this reason, this set
is a natural domain for the \emph{variance form}
$$(\psi,\vp)\mapsto \int x^2\, d\E_{\psi,\varphi}-\langle\tilde{\E}[1]\psi|\tilde{\E}[1]\varphi\rangle\in \C$$
 where $\tilde{\E}[1]=\int x\, d\E$ is the first moment operator of $\E$ (see Section \ref{viimeinenluku}).
This definition for the domain of the operator integral appears most frequently in the literature, see e.g.\ \cite{WernerScreen,Schroeck,Akhiezer}.

One might think that above the definition would settle the question of defining the operator integral. However, after losing the
equality in \eqref{opintineq}, it is no longer clear whether the finiteness of the integral in the right hand side is actually
\emph{needed} to define the operator integral. Loosely speaking, the reason for the square of $f$ appearing in the definition of the domain is connected to the multiplicativity of the projection valued measure, which is no longer true for POVMs. In fact, \emph{the square integrability domain \eqref{sqint} is
not necessarily the largest possible one where \eqref{opintegral} defines an operator.} This is easy to see: for example,
consider the POVM $X\mapsto \E(X):=\mu(X)I$, where $\mu$ is a probability measure and $I$ the identity operator on any Hilbert
space. If $f$ is a $\mu$-integrable function, the integrals $\int f\, d\E_{\psi,\vp}= \langle \psi |(\int f\, d\mu)\vp\rangle$
determine a well-defined operator with domain all of $\hi$, even if $\int |f|^2\, d\mu =\infty$, collapsing the domain
\eqref{sqint} to $\{0\}$. Hence, the natural definition of an operator integral needs closer mathematical examination.

A different definition has been used in \cite{Lahti,LahtiII}; we call this the \textit{strong} operator integral. As we will see, even this choice is not the largest reasonable, and we will also define \textit{weak} operator integrals which have still larger domains than the strong one. These are more operationally motivated, as they are constructed from the scalar measures $X\mapsto \langle \psi |\E(X)\varphi\rangle$.

The structure of the paper is as follows. We begin by considering strong operator integrals in the setting of general Banach spaces.
 When specializing to Hilbert spaces and positive operator measures,
 the role of the square integrability domain is explained.   Subsequently, we proceed to introduce weak operator integrals, and investigate their connection to operators defined via quadratic forms. A physically motivated example concludes the paper.

\section{Preliminaries and notations}

We begin with a fairly general setting: let $E$ and $F$ be Banach spaces and $L(E,F)$ the space of bounded linear operators
$T:E\to F$. (We use complex scalars as our main applications deal with complex Hilbert spaces.) Consider a measurable space $(\Omega,\mathcal A)$
(where by definition $\mathcal A$ is a $\sigma$-algebra of subsets of $\Omega$). A  map
$M:\mathcal A\to L(E,F)$ is called an {\em operator measure} if it is strong operator (or briefly, strongly)
$\sigma$-additive. This means that for each $x\in E$ the map $X\mapsto M_x(X):=M(X)x$ is a {\em vector measure}, i.e. $\sigma$-additive with respect to the norm in
$F$. By the Orlicz-Pettis theorem it is equivalent to require that for any $x\in E$ and $y'\in F'$ (the topological dual of $F$), the function
$X\mapsto M_{y',x}(X):=\langle y',M(X)x\rangle $ on $\mathcal A$ is a complex measure.
The following definition agrees with the usage in \cite{Dunford}.
 (We only integrate $\mathcal A$-measurable functions, though this restriction could be relaxed somewhat, see e.g. \cite{Ylinen}.)

\begin{definition}\label{def:vectorintegral}\rm  Let $\mu:\mathcal A\to F$ be a vector measure
 and $f:\Omega\to \C$ an $\mathcal A$-measurable function.
The function $f$ is {\em $\mu$-integrable} if there is a sequence $(f_n)$  of simple functions converging to $f$ pointwise
and such that $\lim_{n\to\infty}\int_Xf_n\,d\mu$ exists for all $X\in\mathcal A$.
Then $\int_\Omega f\,d\mu:=\lim_{n\to\infty}\int_\Omega f_n\,d\mu$
is called the
{\em integral} of f with respect to $\mu$.
\end{definition}

\begin{remark}\label{rem:lewis}\rm It turns out to be equivalent to the above definition to require that
$f$ is integrable with respect to the complex measure $\mu_{y'}:=y'\circ \mu$ for every $y'\in F'$
and for each $X\in\mathcal A$ one has $\nu(X)\in F$ (clearly unique) such that $\langle y',\nu(X)\rangle =\int_Xf\,d\mu_{y'}$
for all $X\in \mathcal A$, $y'\in F'$. (See \cite{Lewis70}, and \cite{Ylinen} for another proof.) If $f$ is integrable
with respect to every $\mu_{y'}$, it follows from the dominated convergence theorem and the uniform boundedness principle
(as in e.g. \cite[p. 328]{Lahti}) that for each $X\in\mathcal A$ there is some $\nu(X)\in F''$ satisfying
$\langle y',\nu(X)\rangle =\int_Xf\,d\mu_{y'}$ for each $y'\in F'$, and so in case $F$ is reflexive, we can conclude that
$f$ is actually $\mu$-integrable. We use this observation especially when $F$ is a Hilbert space.
\end{remark}

Let $\hi$ be a (complex) Hilbert space, and let $\lh$ denote the space of bounded operators on
$\mathcal{H}$. We do not have to assume that $\hi$ is separable, except in some examples where this is clearly indicated.
The identity operator of $\hi$ is denoted by $I_\hi$ or simply by $I$.
 For $\psi,\varphi\in \mathcal{H}$, we use
the symbol $\Ket{\psi}\Bra{\varphi}$ to denote the rank one
operator $\eta\mapsto \ang{\varphi}{\eta}\,\psi$. For a (linear) operator $A$ in
$\mathcal{H}$, we let $\dom(A)$ denote the domain of $A$, i.e.\
the (linear) subspace of $\mathcal{H}$ on which $A$
is defined.
As before, $(\Omega,\mathcal A)$ is a measurable space.
We let $\mathcal{B}\left( \Omega \right)$ denote
the Borel $\sigma$-algebra of any topological space $\Omega$.
We follow the convention $\N=\{0,1,2,\ldots\}$, and let $\CHI X$ be the characteristic function of the set $X\in\cal A$.

\begin{definition}\label{def:POVM}\rm Let $\mathsf{E}:{\cal A} \to \mathcal{L(H)}$ be a function.
\begin{enumerate}
\renewcommand{\labelenumi}{(\alph{enumi})}
\item
$\mathsf{E}$ is a \emph{positive operator (valued) measure}, or POVM
for short, if $\mathsf{E}$ is an operator measure and $\E(X)\ge0$ for all $X\in
\cal A$.
\item A POVM $\mathsf{E}$ is \emph{normalized} if
$\mathsf{E}(\Omega)=I$.
\item A projection valued POVM (PVM for short) which is normalized is a \emph{spectral
measure}.
\end{enumerate}
\end{definition}
\noindent
For a POVM $\mathsf{E}:\mathcal{A}\to \mathcal{L(H)}$
and $\psi,\varphi\in \mathcal{H}$, we let
$\mathsf{E}_{\psi,\varphi}$ denote the complex measure
$X\mapsto \ang{\psi}{\mathsf{E}(X)\varphi}$ and
$\mathsf{E}_\varphi$ denote the $\mathcal{H}$-valued vector
measure $X\mapsto \mathsf{E}(X)\varphi$.

\emph{Naimark's dilation theorem} (see e.g.\ \cite{riesz}) states that, for any  POVM $\E:\mathcal{A}\to \lh$, there
exists another Hilbert space $\h K$, a spectral measure $\F:\mathcal{A}\to \h L (\h K)$, and a bounded linear map $V:\hi \to \h K$,
such that $\E(X) = V^*\F(X)V$ for all $X\in \mathcal{A}$. If the set of the linear combinations of vectors $\F(X)V\varphi$, $X\in\mathcal A$, $\varphi\in\hi$, is dense in $\h K$, then the Naimark dilation $(\cal K,\F,V)$ is said to be {\it minimal}.
Note that $\E$ is normalized if and only if $V$ is an isometry, i.e.\ $V^*V=I$. In that case, $\hi$ can be identified with the range of $V$, a subspace of $\cal K$.

We already discussed integration with respect to a vector measure. Since an operator measure usually fails to be norm $\sigma$-additive,
integration with respect to operator measures needs a different approach.
For a bounded measurable function $f:\Omega\to \C$, integration with respect to a POVM is, however, quite elementary
(see e.g. \cite{Berberian}).
One starts by  setting
$
\int f d\mathsf{E} := \sum_{n=0}^k c_n
\mathsf{E}(X_n)\;\text{for}\; f= \sum_{n=0}^k c_n \CHI{X_n},\; c_n\in\C,
\; X_n\cap X_m=\emptyset, \; m\ne n\in\N.
$
The extension from these simple functions to bounded $\cal A$-measurable
functions $f:\Omega\to \mathbb{C}$ requires the convergence  of
the integrals of simple functions forming a uniformly convergent sequence. Ultimately this depends
on the fact that the range of any POVM is norm bounded, and the
resulting integral defines a bounded operator.

The following lemma is straightforward to prove by using the usual approximation techniques appearing in the construction of the integral.
\begin{lemma}\label{naimarkintegral} Let $(\h K,\F,V)$ be a Naimark dilation of $\E$. Then for every bounded $\h A$-measurable function $f:\Omega\to \C$, we have
$\int f d\E = V^*\left(\int f d\F\right) V$.
\end{lemma}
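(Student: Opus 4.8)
The plan is to exploit the linearity of both sides of the asserted identity in the integrand, together with the norm-continuity of the conjugation map $T\mapsto V^*TV$, so that the whole statement reduces to the case of simple functions, where it follows at once from the defining relation $\E(X)=V^*\F(X)V$ of the Naimark dilation.

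First I would verify the claim for a simple function $f=\sum_{n=0}^{k}c_n\CHI{X_n}$ with pairwise disjoint $X_n\in\mathcal A$. By the definition of the integral of a simple function against an operator measure, $\int f\,d\E=\sum_{n=0}^{k}c_n\E(X_n)$ and $\int f\,d\F=\sum_{n=0}^{k}c_n\F(X_n)$. Substituting $\E(X_n)=V^*\F(X_n)V$ and using linearity of $T\mapsto V^*TV$ then gives $\int f\,d\E=\sum_{n=0}^{k}c_n V^*\F(X_n)V=V^*\big(\sum_{n=0}^{k}c_n\F(X_n)\big)V=V^*\big(\int f\,d\F\big)V$, which is the desired identity for simple $f$.

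Next I would pass from simple functions to an arbitrary bounded $\mathcal A$-measurable $f$. Since $f$ is bounded and measurable, there is a sequence $(f_j)$ of simple functions converging to $f$ \emph{uniformly} on $\Omega$. As recalled in the construction of the bounded operator integral above, uniform convergence $f_j\to f$ forces norm convergence $\int f_j\,d\E\to\int f\,d\E$ in $\lh$ and $\int f_j\,d\F\to\int f\,d\F$ in $\mathcal L(\h K)$. Because $V$ is bounded, the conjugation map $T\mapsto V^*TV$ is a bounded linear operator from $\mathcal L(\h K)$ to $\lh$, satisfying $\no{V^*TV}\le\no{V}^2\no{T}$, hence norm-continuous. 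Applying it to the simple-function identity and letting $j\to\infty$ yields $\int f\,d\E=\lim_j\int f_j\,d\E=\lim_j V^*\big(\int f_j\,d\F\big)V=V^*\big(\lim_j\int f_j\,d\F\big)V=V^*\big(\int f\,d\F\big)V$.

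There is no genuine obstacle here, as the statement itself anticipates; the only point needing care is the interchange of the limit with the conjugation map, and this is exactly what the estimate $\no{V^*TV}\le\no{V}^2\no{T}$ secures. Everything else rests on the two facts already available: the Naimark relation $\E(X)=V^*\F(X)V$ applied termwise to simple functions, and the feature of the bounded integral that uniform approximation of $f$ transfers to norm approximation of both $\int f\,d\E$ and $\int f\,d\F$.
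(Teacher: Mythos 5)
Your proof is correct and follows exactly the route the paper intends: the paper gives no written proof, stating only that the lemma is ``straightforward to prove by using the usual approximation techniques appearing in the construction of the integral,'' and your argument (termwise application of $\E(X)=V^*\F(X)V$ for simple functions, then passage to the uniform limit via norm-continuity of $T\mapsto V^*TV$) is precisely that standard approximation argument, carried out in full.
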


For unbounded functions,  even
defining a domain for the operator valued integral needs attention.
We study this question next.

\section{Strong operator integrals}

 Let $(\Omega,\mathcal A)$ be a measurable space. We first consider general Banach spaces $E$ and $F$.

\begin{definition}\label{strongintBan}\rm Let $M:\mathcal A\to L(E,F)$ be an operator measure and $f:\Omega\to\C$  an $\mathcal A$-measurable
function. We let $D(f,M)$ denote the subset of $E$ consisting of those $x\in E$ for which $f$ is integrable with respect to the vector measure
$X\mapsto M_x(X)=M(X)x$.
If $x\in D(f,M)$, we denote by $L(f,M)x$ the integral $\int_{\Omega} f dM_x$.
\end{definition}
\begin{proposition}\label{IntLin} If $f:\Omega\to\C$ is  an $\mathcal A$-measurable function, the set $D(f,M)$, the domain of $L(f,M)$,
is a vector subspace of $E$,
 and $L(f,M): D(f,M)\to F$ is a linear map.
\end{proposition}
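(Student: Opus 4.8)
The plan is to bypass the sequence-dependent Definition~\ref{def:vectorintegral} in favour of the sequence-free characterization recorded in Remark~\ref{rem:lewis}, and then to reduce everything to the linearity of scalar integration in the measure. The obstacle one must avoid is precisely that Definition~\ref{def:vectorintegral} constructs $\int_\Omega f\,dM_x$ through \emph{some} sequence of simple functions converging pointwise to $f$, and this sequence may depend on $x$: a sequence witnessing the $M_x$-integrability of $f$ need not witness its $M_y$-integrability, so one cannot merely add the approximating integrals and pass to the limit. Remark~\ref{rem:lewis} removes this difficulty by phrasing integrability through the scalar measures $M_{y',x}$ alone, which are manifestly linear in the vector.

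First I would record the elementary identities at the level of measures. Since each $M(X)$ is a linear operator, for all $x,y\in E$, $\alpha\in\C$ and $X\in\mathcal A$ one has $M_{x+y}(X)=M(X)(x+y)=M_x(X)+M_y(X)$ and $M_{\alpha x}(X)=\alpha M_x(X)$; in particular $M_0=0$, whose integral is $0$, so $0\in D(f,M)$. Applying any $y'\in F'$ and using $(M_x)_{y'}=M_{y',x}$ yields the corresponding identities $M_{y',x+y}=M_{y',x}+M_{y',y}$ and $M_{y',\alpha x}=\alpha M_{y',x}$ between complex measures.

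Now take $x,y\in D(f,M)$. By Remark~\ref{rem:lewis}, $f$ is integrable with respect to each $M_{y',x}$ and $M_{y',y}$, and for every $X$ there are (unique) vectors $\nu_x(X),\nu_y(X)\in F$ with $\langle y',\nu_x(X)\rangle=\int_X f\,dM_{y',x}$ and $\langle y',\nu_y(X)\rangle=\int_X f\,dM_{y',y}$ for all $y'$. Since scalar integrability passes to sums of complex measures and the scalar integral is additive in the measure, $f$ is integrable with respect to $M_{y',x+y}=M_{y',x}+M_{y',y}$, with $\int_X f\,dM_{y',x+y}=\int_X f\,dM_{y',x}+\int_X f\,dM_{y',y}$. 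Setting $\nu(X):=\nu_x(X)+\nu_y(X)\in F$, linearity of the pairing gives $\langle y',\nu(X)\rangle=\int_X f\,dM_{y',x+y}$ for every $y'$, so both conditions of Remark~\ref{rem:lewis} hold for $M_{x+y}$; hence $x+y\in D(f,M)$ and, taking $X=\Omega$, $L(f,M)(x+y)=\nu(\Omega)=L(f,M)x+L(f,M)y$. The identical argument with $\nu_{\alpha x}(X):=\alpha\,\nu_x(X)$ gives $\alpha x\in D(f,M)$ and $L(f,M)(\alpha x)=\alpha L(f,M)x$. Thus $D(f,M)$ is a subspace and $L(f,M)$ is linear. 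The only substantive step is the appeal to Remark~\ref{rem:lewis}; after that, everything rests on additivity and homogeneity of the complex integral in the measure together with linearity of $\langle y',\cdot\rangle$, while uniqueness of $\nu(X)$ (valid since $F'$ separates points of $F$) guarantees the integrals are well defined.
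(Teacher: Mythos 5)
Your argument is correct. The paper itself does not give a proof of Proposition~\ref{IntLin}; it simply cites Corollary~3.7 of \cite{Ylinen}, so there is no in-text argument to compare against, and your self-contained derivation is a genuine addition. Your key move --- replacing the sequence-based Definition~\ref{def:vectorintegral} by the Lewis-type characterization of Remark~\ref{rem:lewis} --- is exactly the right way around the real obstacle, which you correctly identify: a sequence of simple functions witnessing the $M_x$-integrability of $f$ need not witness its $M_y$-integrability, so one cannot naively add approximating integrals and pass to the limit. Once integrability is phrased through the scalar measures $M_{y',x}$, everything reduces to the additivity and homogeneity of the complex integral in the measure (using $|\mu+\nu|\le|\mu|+|\nu|$ for the integrability of $f$ with respect to the sum) together with the linearity of $x\mapsto M_{y',x}$, and the representing vector $\nu_x(X)+\nu_y(X)$ verifies the second condition of Remark~\ref{rem:lewis} for $M_{x+y}$; uniqueness via the separation of points by $F'$ closes the argument. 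The one step you lean on implicitly is that the Definition~\ref{def:vectorintegral} integral coincides with the vector $\nu(\Omega)$ of Remark~\ref{rem:lewis} (so that $L(f,M)(x+y)=\nu(\Omega)$ really is the sum of the two integrals); this identification is part of the content of the equivalence asserted in the remark (and proved in \cite{Lewis70,Ylinen}), so it is legitimate to invoke, but it is worth stating explicitly that you are using the equality of the two integrals and not merely the equivalence of the two integrability notions. In short: the proof is sound, and it trades the paper's external citation for an elementary argument built on a fact the paper has already recorded.
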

\begin{proof} See e.g. \cite{Ylinen}, Corollary 3.7.
\end{proof}

 The following proposition is an immediate consequence of Remark \ref{rem:lewis}.

\begin{proposition}\label{strongintRefl} Assume that the Banach space  $F$ is reflexive.  For $x\in E$ the following conditions are equivalent:

{\rm(i)} $x\in D(f,M)$;

{\rm(ii)} $f$ is $M_{y',x}$ integrable for a all $y'\in F'$.
\end{proposition}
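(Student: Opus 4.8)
The plan is to recognize that condition (i) is, by Definition~\ref{strongintBan}, nothing but the $\mu$-integrability of $f$ in the sense of Definition~\ref{def:vectorintegral} for the single vector measure $\mu:=M_x$, $X\mapsto M(X)x$, and that the induced scalar measures $\mu_{y'}=y'\circ\mu$ coincide precisely with the complex measures $M_{y',x}$, $X\mapsto\langle y',M(X)x\rangle$, appearing in (ii). Once these identifications are in place, the entire statement reduces to a direct application of Remark~\ref{rem:lewis} to the measure $\mu$, and so I would organize the argument as two implications.

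For (i)$\Rightarrow$(ii) I would simply invoke the first (Lewis) characterization recorded in Remark~\ref{rem:lewis}: the $\mu$-integrability of $f$ entails, in particular, that $f$ is integrable with respect to each scalar measure $\mu_{y'}=M_{y',x}$. This direction uses neither reflexivity of $F$ nor any limiting argument, and is therefore immediate.

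The implication (ii)$\Rightarrow$(i) is the only step carrying content, and it is where reflexivity must enter. Starting from the integrability of $f$ with respect to every $\mu_{y'}$, the dominated convergence theorem and the uniform boundedness principle (as cited in Remark~\ref{rem:lewis}) produce, for each $X\in\mathcal A$, a functional $\nu(X)\in F''$ satisfying $\langle y',\nu(X)\rangle=\int_X f\,d\mu_{y'}$ for all $y'\in F'$. Since $F$ is reflexive, the canonical embedding $F\to F''$ is surjective, so each $\nu(X)$ is identified with a (necessarily unique) element of $F$. This places us exactly in the hypotheses of the Lewis criterion of Remark~\ref{rem:lewis}, which then yields the $\mu$-integrability of $f$, that is, $x\in D(f,M)$.

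I do not expect a genuine obstacle here, since all the analytic work is already packaged in Remark~\ref{rem:lewis} and the proposition is essentially a translation of that remark into the present notation. The single point deserving care is the reflexivity hypothesis used in (ii)$\Rightarrow$(i): it is exactly what permits the $F''$-valued set function $\nu$ to be pulled back into $F$. Without reflexivity one would obtain only an $F''$-valued ``integral,'' and the $\mu$-integrability demanded by Definition~\ref{def:vectorintegral} could fail, so the equivalence would break down precisely at that step.
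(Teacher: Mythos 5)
Your proposal is correct and follows exactly the paper's route: the paper states that the proposition is an immediate consequence of Remark~\ref{rem:lewis}, and your argument is precisely that remark applied to $\mu = M_x$ with $\mu_{y'} = M_{y',x}$, with reflexivity used to pull the $F''$-valued set function back into $F$. The only difference is that you have written out the details the paper leaves implicit.
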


We mainly apply the above results in the case where $F=\hi$, a Hilbert space.

\begin{definition}\label{def:ortho} \rm We say that a vector measure $\mu:\mathcal A\to \hi$ is {\em orthogonally scattered\,}
if $$\ip{\mu(X)}{\mu(Y)}=0$$ whenever the sets $X,\,Y\in\mathcal A$ are disjoint.
\end{definition}

Orthogonally scattered vector measures have a highly developed theory, see e.g. \cite{Masani68}.
A basic observation is that
if  $\mu:\mathcal A\to \hi$ is an orthogonally scattered vector measure, by denoting  $\lambda(X)=\lambda_\mu(X):={\no{\mu(X)}}^2$,
we get a finite positive measure $\lambda$ on $\mathcal A$. The following result is well known and we only give a brief 
indication of proof.

\begin{proposition}\label{prop:orthoint}\rm  Let $\mu:\mathcal A\to\hi$ be a an orthogonally scattered vector measure and $\lambda=\lambda_\mu$ the positive measure defined above. An $\mathcal A$-measurable function $f:\Omega\to\C$ is $\mu$-integrable if and only if $|f|^2$ is $\lambda$-integrable, in which case
$\no{\int_\Omega f\,d\mu}^2=\int_\Omega |f|^2 d\lambda.$
\end{proposition}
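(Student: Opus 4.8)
The plan is to encode the entire statement in a single Hilbert-space isometry and then read off both directions from its boundedness and surjectivity. Since $\mu$ is orthogonally scattered, for disjoint $X,Y$ the Pythagorean identity gives $\no{\mu(X\cup Y)}^2=\no{\mu(X)}^2+\no{\mu(Y)}^2$, and combined with the norm $\sigma$-additivity of $\mu$ this is what makes $\lambda=\lambda_\mu$ a finite positive measure, as already noted before the statement. First I would define a map $V$ on the simple functions of $L^2(\lambda)$ by $V\rd{\sum_n c_n\CHI{X_n}}:=\sum_n c_n\mu(X_n)$ for disjoint $X_n$. Orthogonal scatteredness gives $\no{Vs}^2=\sum_n|c_n|^2\no{\mu(X_n)}^2=\sum_n|c_n|^2\lambda(X_n)=\int_\Omega|s|^2\,d\lambda$, so $V$ is isometric on simple functions; as these are dense in $L^2(\lambda)$, it extends uniquely to an isometry $V:L^2(\lambda)\to\hi$. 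Note also that for a simple $s$ one has $\int_\Omega s\,d\mu=Vs$ straight from the definition of the integral.

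For the ``if'' direction together with the norm identity, suppose $|f|^2$ is $\lambda$-integrable, i.e.\ $f\in L^2(\lambda)$. I would choose simple functions $f_n\to f$ pointwise with $|f_n|\le|f|$ (such approximants always exist for a $\C$-valued measurable $f$); dominated convergence in $L^2(\lambda)$ then gives $f_n\to f$ and $\CHI X f_n\to\CHI X f$ in $L^2(\lambda)$ for every $X\in\mathcal A$. Since $\int_X f_n\,d\mu=V(\CHI X f_n)$ and $V$ is continuous, $\lim_n\int_X f_n\,d\mu=V(\CHI X f)$ exists for all $X$, so $f$ is $\mu$-integrable by Definition \ref{def:vectorintegral}, with $\int_\Omega f\,d\mu=Vf$. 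Because $V$ is an isometry this yields $\no{\int_\Omega f\,d\mu}^2=\no{f}_{L^2(\lambda)}^2=\int_\Omega|f|^2\,d\lambda$, the asserted equality.

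The substantive direction is the converse, and this is where I expect the main work. Suppose $f$ is $\mu$-integrable; by Remark \ref{rem:lewis} it is then integrable with respect to each scalar measure $\mu_y:=\ip{y}{\mu(\cdot)}$, $y\in\hi$. Writing $\mu_y(X)=\ip{y}{V\CHI X}=\ip{V^* y}{\CHI X}_{L^2(\lambda)}=\int_X\overline{V^* y}\,d\lambda$ identifies $\mu_y$ as absolutely continuous with respect to $\lambda$ with density $g_y:=\overline{V^* y}\in L^2(\lambda)$, so $\mu_y$-integrability of $f$ means precisely $f g_y\in L^1(\lambda)$. Because $V$ is isometric, $V^*V=I$ on $L^2(\lambda)$, hence $V^*$ is surjective and the densities $g_y$ sweep out all of $L^2(\lambda)$ as $y$ ranges over $\hi$. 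Thus $fg\in L^1(\lambda)$ for every $g\in L^2(\lambda)$, and the remaining, genuinely nontrivial, step is to conclude $f\in L^2(\lambda)$ from this. Here I would argue that the multiplication map $g\mapsto fg$, $L^2(\lambda)\to L^1(\lambda)$, has closed graph (if $g_n\to g$ in $L^2$ and $fg_n\to h$ in $L^1$, a common subsequence converges $\lambda$-a.e., forcing $h=fg$), hence is bounded; then $g\mapsto\int_\Omega fg\,d\lambda$ is a bounded functional on $L^2(\lambda)$, and Riesz representation (using that $\lambda$ is finite) gives $f\in L^2(\lambda)$, i.e.\ $|f|^2$ is $\lambda$-integrable. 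The norm identity then follows from the first part. The one step I would flag as the real obstacle is this last passage from ``$fg\in L^1$ for all $g\in L^2$'' to ``$f\in L^2$'', which is where the finiteness of $\lambda$ and a soft functional-analytic argument (closed graph plus duality) must be invoked rather than a direct estimate.
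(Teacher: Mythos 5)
Your proof is correct, and it is more self-contained than the paper's, which only points to Lemma A.2(b) of \cite{Lahti} for one direction and to an adaptation of Proposition \ref{prop:squareint} for the other. Your forward direction (from $|f|^2\in L^1(\lambda)$ to $\mu$-integrability, together with the norm identity) via the isometry $V:L^2(\lambda)\to\hi$ is essentially the intended adaptation of Proposition \ref{prop:squareint}: there the Naimark dilation yields an inequality, here orthogonal scatteredness yields the exact identity $\no{\int_X s\,d\mu}^2=\int_X|s|^2\,d\lambda$ for simple $s$. For the converse, however, you take a long detour through the scalar measures $\mu_y$, Radon--Nikodym densities, surjectivity of $V^*$, the closed graph theorem and $L^2$--$L^1$ duality, whereas the same isometry finishes it in one line: if $(f_n)$ is the sequence of simple functions furnished by Definition \ref{def:vectorintegral}, then $\int_\Omega f_n\,d\mu=Vf_n$ converges in $\hi$, so $\no{f_n-f_m}_{L^2(\lambda)}=\no{V(f_n-f_m)}\to0$; hence $(f_n)$ converges in $L^2(\lambda)$, and since a subsequence converges $\lambda$-a.e.\ while $f_n\to f$ pointwise, the $L^2$-limit is $f$, giving $|f|^2\in L^1(\lambda)$ without invoking Remark \ref{rem:lewis} at all. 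Your route does go through, but two details deserve mention: the step from $\mu$-integrability to $\mu_y$-integrability for every $y$ silently uses the nontrivial half of the equivalence in Remark \ref{rem:lewis} (Lewis's theorem), and the final Riesz representation step only produces some $h\in L^2(\lambda)$ representing the functional, so identifying $h$ with $\overline f$ $\lambda$-a.e.\ still needs a short truncation argument --- or one can bypass duality entirely via the estimate $\no{f\CHI{E_n}}_2^2=\int f\,\big(\overline f\,\CHI{E_n}\big)\,d\lambda\le C\no{f\CHI{E_n}}_2$ with $E_n=\{|f|\le n\}$, which uses the finiteness of $\lambda$ and the bound from the closed graph theorem directly.
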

\begin{proof}  In one direction, one may use the argument in the proof of Lemma A.2 (b) in \cite{Lahti}. In the other direction
a technique from the proof of Proposition \ref{prop:squareint} below may be adapted.
\end{proof}

\begin{remark}\label{rem:operortho}\rm (a) It follows from the above proposition that if $E$ is a Banach space and $M:\mathcal A \to L(E,\hi)$ is an operator measure such that for each $x\in E$ the vector measure $M_x:\mathcal A\to \hi$ is orthogonally scattered, then the domain
$D(f,M)$ of the strong operator integral $L(f,M)$ consists of precisely those vectors $x\in E$ for which
$|f|^2$ is integrable with respect to the measure $X\mapsto {\no{M_x(X)}}^2$ on $\mathcal A$.

(b) If $\E:\mathcal A\to\lh$ is a PVM, then for each $\varphi\in\hi$ the vector measure $\E_\varphi$ is orthogonally scattered and  ${\no{\E_\varphi(X)}}^2=\ip{\varphi}{\E(X)\varphi}$ whenever $X\in \mathcal A$.

(c) Consider the Hilbert space $\ell^2=\ell^2(\N)$. Let $\mathcal A$ be the power set of $\N$. Let
$g:\N\to \C$ be a bounded function and define $M:\mathcal A\to \lh$  by the formula  $M(X)\varphi=g\varphi\CHI X$ for all $X\in\mathcal A$, $\varphi\in\ell^2$.
Then $M$ satisfies the assumption in (a), so that $D(f,M)$ consists of those $\varphi\in \ell^2$ for which $fg\phi\in\ell^2$.
Note that  $M$ need not be a PVM, nor even a POVM. This example can be easily extended for more general measure spaces. 
\end{remark}
 We have seen (the well-known fact) that for a PVM $\E:\mathcal A\to\lh$, a vector $\varphi\in\hi$ belongs to $D(f,\E)$
 if and only if $|f|^2$ is integrable with respect to the measure $\E_{\varphi,\varphi}$. More generally, for any POVM
 $\E:\mathcal A\to\lh$ we call the set $\sqd{f}{\E}:=\{\varphi\in\hi\,|\, |f|^2 \text{ is $\E_{\varphi,\varphi}$-integrable}\}$
 the {\em square integrability domain} for the integral $\int_\Omega f\,d\E$. This makes sense, as it is known that
 $\sqd{f}{\E}$ is a linear subspace of $\hi$ contained in $D(f,\E)$. In \cite{Lahti} this was given a direct elementary proof.
 The authors of \cite{Lahti} were unaware that this result essentially had already appeared in \cite{WernerScreen}, where the proof
 is based on Naimark's dilation theorem. (For completeness, we give a proof below reproducing the idea in \cite{WernerScreen}.) The fact that $\sqd{f}{\E}$ is a linear subspace is implied by the following
easy consequence of the Cauchy-Schwarz inequality. We state it explicitly as it will also have some later use. (The terminology will be recalled at the beginning of Section \ref{weakopint}.)

\begin{lemma}\label{formineq}
Let $\cal V$ be a vector space, and $q:\cal V\times \cal V\to \C$ a positive sesquilinear form. Then
$$
q(\varphi+\psi,\varphi+\psi)\leq 2q(\varphi,\varphi)+ 2q(\psi,\psi),\qquad
\varphi,\psi\in \cal V.
$$
\end{lemma}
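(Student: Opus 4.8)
The plan is to derive the inequality directly from the Cauchy-Schwarz inequality applied to the positive sesquilinear form $q$. First I would recall that since $q$ is positive, it satisfies the Cauchy-Schwarz inequality in the form $|q(\varphi,\psi)|^2\leq q(\varphi,\varphi)\,q(\psi,\psi)$ for all $\varphi,\psi\in\cal V$; this is the standard consequence of positivity and is the tool I expect to carry the entire argument. The only mild subtlety is that $q$ need not be an inner product (it may be degenerate and is only positive semidefinite), but Cauchy-Schwarz holds in that generality, so nothing is lost.

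Next I would expand $q(\varphi+\psi,\varphi+\psi)$ using sesquilinearity. This gives
$$
q(\varphi+\psi,\varphi+\psi)=q(\varphi,\varphi)+q(\varphi,\psi)+q(\psi,\varphi)+q(\psi,\psi).
$$
The cross terms $q(\varphi,\psi)+q(\psi,\varphi)$ are real (being a number plus its conjugate, by the Hermitian symmetry that follows from positivity), and in any case their sum is bounded above by $2|q(\varphi,\psi)|$. Then I would apply Cauchy-Schwarz to estimate $|q(\varphi,\psi)|\leq q(\varphi,\varphi)^{1/2}q(\psi,\psi)^{1/2}$.

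Finally I would invoke the elementary inequality $2ab\leq a^2+b^2$ (equivalently $2q(\varphi,\varphi)^{1/2}q(\psi,\psi)^{1/2}\leq q(\varphi,\varphi)+q(\psi,\psi)$, which is just $(a-b)^2\geq 0$ with $a=q(\varphi,\varphi)^{1/2}$, $b=q(\psi,\psi)^{1/2}$). Combining these gives
$$
q(\varphi+\psi,\varphi+\psi)\leq q(\varphi,\varphi)+q(\psi,\psi)+2q(\varphi,\varphi)^{1/2}q(\psi,\psi)^{1/2}\leq 2q(\varphi,\varphi)+2q(\psi,\psi),
$$
which is exactly the claimed bound.

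There is no genuine obstacle here; the statement is a routine application of Cauchy-Schwarz followed by the arithmetic-geometric mean inequality, and the only point requiring a moment's care is justifying that the cross terms are controlled by $2|q(\varphi,\psi)|$ in the semidefinite (possibly degenerate) setting. Since positivity of $q$ already yields Hermitian symmetry and Cauchy-Schwarz, this is immediate.
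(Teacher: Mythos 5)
Your proof is correct and follows exactly the route the paper indicates: the lemma is stated there without a written-out proof, described only as an ``easy consequence of the Cauchy--Schwarz inequality,'' and your expansion of $q(\varphi+\psi,\varphi+\psi)$ followed by Cauchy--Schwarz and the arithmetic--geometric mean inequality is precisely that intended argument. (As a minor aside, one can also avoid Cauchy--Schwarz entirely by noting that $q(\varphi+\psi,\varphi+\psi)=2q(\varphi,\varphi)+2q(\psi,\psi)-q(\varphi-\psi,\varphi-\psi)\leq 2q(\varphi,\varphi)+2q(\psi,\psi)$ by positivity, but your version is equally valid.)
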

 
\begin{proposition}\label{prop:squareint}
The vector valued integral $\int f d\E_\varphi$ exists for each $\varphi\in \sqd{f}{\E}$.
\end{proposition}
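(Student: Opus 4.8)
The plan is to reduce the statement to the projection-valued case via Naimark's dilation theorem and then transport the resulting integral back to $\hi$ by a bounded operator. Fix a Naimark dilation $(\mathcal K,\F,V)$ of $\E$, so that $\E(X)=V^*\F(X)V$ for all $X\in\mathcal A$. For $\varphi\in\sqd{f}{\E}$ I would first record the two identities $\E_{\varphi,\varphi}(X)=\langle V\varphi|\F(X)V\varphi\rangle$ and $\E_\varphi(X)=\E(X)\varphi=V^*\F(X)V\varphi=V^*\F_{V\varphi}(X)$, where $\F_{V\varphi}$ denotes the $\mathcal K$-valued vector measure $X\mapsto\F(X)V\varphi$. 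Thus $\E_\varphi=V^*\circ\F_{V\varphi}$, and the square integrability hypothesis on $\varphi$ says precisely that $|f|^2$ is integrable with respect to the measure $X\mapsto\langle V\varphi|\F(X)V\varphi\rangle$.

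Next I would invoke Remark \ref{rem:operortho}(b): since $\F$ is a PVM, the vector measure $\F_{V\varphi}$ is orthogonally scattered in $\mathcal K$, and its associated positive measure is $\lambda_{\F_{V\varphi}}(X)=\no{\F(X)V\varphi}^2=\langle V\varphi|\F(X)V\varphi\rangle$, using that each $\F(X)$ is an orthogonal projection. Hence $\lambda_{\F_{V\varphi}}=\E_{\varphi,\varphi}$, so the hypothesis becomes the statement that $|f|^2$ is $\lambda_{\F_{V\varphi}}$-integrable. Proposition \ref{prop:orthoint} then applies directly and yields that $f$ is $\F_{V\varphi}$-integrable; in particular the $\mathcal K$-valued integral $\int f\,d\F_{V\varphi}$ exists.

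It remains to push this integral through the bounded operator $V^*$. Here I would use the elementary fact that the vector integral of Definition \ref{def:vectorintegral} commutes with bounded operators: if $f$ is integrable with respect to a vector measure $\mu$ and $T$ is bounded, then choosing a sequence $(f_n)$ of simple functions as in the definition, one has $T\int_X f_n\,d\mu=\int_X f_n\,d(T\mu)$ for every $n$ and every $X$ by linearity, so applying $T$ and passing to the limit shows that the same sequence witnesses $(T\mu)$-integrability of $f$, with $\int f\,d(T\mu)=T\int f\,d\mu$. Taking $\mu=\F_{V\varphi}$ and $T=V^*$, and using $\E_\varphi=V^*\circ\F_{V\varphi}$, I conclude that $f$ is $\E_\varphi$-integrable and $\int f\,d\E_\varphi=V^*\int f\,d\F_{V\varphi}$, which is exactly the asserted existence.

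The computations are all routine; the only point that needs care is the last transfer step, and specifically the observation that integrability is preserved by a bounded operator using the \emph{same} approximating sequence of simple functions. Since Definition \ref{def:vectorintegral} requires convergence of $\int_X f_n\,d\mu$ for every $X\in\mathcal A$ simultaneously, one must check that this single sequence continues to work after composing with $V^*$, which it does because $V^*$ is continuous and the identity $V^*\int_X f_n\,d\mu=\int_X f_n\,d(V^*\mu)$ is exact for simple functions. Note that no reflexivity or separability assumptions enter, so the argument reproduces the idea of \cite{WernerScreen} in full generality.
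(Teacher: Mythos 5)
Your argument is correct, and it rests on the same two pillars as the paper's proof: the Naimark dilation and the isometry $\no{\int_X g\,d\F_{V\varphi}}^2=\int_X|g|^2\,d\E_{\varphi,\varphi}$ available in the dilated space. The organization, however, is genuinely different. The paper works directly with a sequence of simple functions $f_n\to f$, $|f_n|\le|f|$, and shows that $\big(\int_X f_n\,d\E_\varphi\big)_n$ is Cauchy for every $X$ via the estimate $\no{\int_X(f_n-f_m)\,d\E_\varphi}^2\le\no{V^*}^2\int_X|f_n-f_m|^2\,d\E_{\varphi,\varphi}$ together with dominated convergence; the bounded map $V^*$ enters only as a constant in that inequality. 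You instead modularize: you identify $\F_{V\varphi}$ as an orthogonally scattered measure whose control measure is exactly $\E_{\varphi,\varphi}$, quote Proposition \ref{prop:orthoint} to obtain $\F_{V\varphi}$-integrability of $f$, and then transfer the integral through $V^*$ by a push-forward lemma for Definition \ref{def:vectorintegral}, correctly observing that the \emph{same} approximating sequence survives composition with a bounded operator. This buys a cleaner separation of concerns, but it does not make the proof more elementary: the direction of Proposition \ref{prop:orthoint} you invoke ($|f|^2$ $\lambda$-integrable implies $f$ $\mu$-integrable) is precisely the one the paper leaves as a sketch, remarking that it is proved by adapting the technique of the very proposition at hand --- i.e.\ the same Cauchy-plus-dominated-convergence argument. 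So your deduction is valid given the stated results, but the analytic core has been relocated into the cited proposition rather than replaced; a fully self-contained write-up would still have to supply that argument somewhere.
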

\begin{proof}
Let $(\h K,\F,V)$ be a Naimark dilation of $\E$ and $(f_n)$ a sequence of simple functions converging pointwise to $f$ with $|f_n|\leq |f|$. Then the bounded operator $\int f_n d\E$ is defined for each $n$ according to the definition in the preceding section. Fix $\varphi\in \sqd{f}{\E}$. Using Lemma \ref{naimarkintegral}, and the multiplicativity of the spectral measure $\F$, we have for each $X\in \cal A$, that
\begin{align*}
\Big\|\int_X (f_n-f_m) d\E_\varphi\Big\|^2 &= \Big\|V^*\int_X (f_n-f_m) d\F V\varphi\Big\|^2\leq \|V^*\|^2 \Big\|\int_X (f_n-f_m) d\F V\varphi\Big\|^2\\
&= \|V^*\|^2 \int_X |f_n-f_m|^2 d\F_{V\varphi,V\varphi}=\|V^*\|^2\int_X |f_n-f_m|^2 d\E_{\varphi,\varphi}.
\end{align*}
Since $|f|^2$ is integrable, it thus follows from the dominated convergence theorem that the sequence $(\int_X f_n d\E_\varphi)$ of vectors is a Cauchy sequence, and thus converges. This proves the existence of the integral $\int f d\E_{\varphi}$ of $f$ with respect to the vector valued measure $\E_\varphi$.
\end{proof}
According to this result, we can define a linear operator
\begin{align*}
\sqdop{f}{\E}:\, \sqd{f}{\E}\to \hi, \qquad \sqdop{f}{\E}\varphi := \int f d\E_\varphi.
\end{align*}
Since $f$ is integrable with respect to each scalar measures $\E_{\psi,\varphi}$, $\psi\in \hi$, if is integrable with respect to $\E_\varphi$ (see e.g. \cite{Dunford}), it follows that
$
\langle \psi |\sqdop{f}{\E}\varphi\rangle = \int f d\E_{\psi,\varphi}\text{ for all } \varphi\in \sqd{f}{\E},\, \psi\in \hi.
$
Since $\sqd{f}{\E}=\{\vp\in \hi \mid V\vp\in \sqd{f}{\F}\}$, where $\sqd{f}{\F}$ is the domain of the selfadjoint operator $\sqdop{f}{\F}$, it now follows that
\begin{equation}\label{squarenaimark}
\sqdop{f}{\E} = V^*\sqdop{f}{\F}V.
\end{equation}
(see also \cite{LahtiII}.)

Summarizing, for a POVM $\E$ and a measurable function $f$, we have $\tilde D(f,\E)\subset D(f,\E)$ and $\tilde L(f,\E)\subset L(f,\E)$
where
\begin{align}\label{domscal}
\strgdom{f}{\E}&=\Big\{ \varphi\in \hi\Big| \int |f|\,d|\E_{\psi,\varphi}|<\infty \text{ for all }\psi\in \hi\Big\},\\
\langle \psi|\strgop{f}{\E}\varphi\rangle &= \int f d\E_{\psi,\varphi},
\end{align}
for the  total variation $|\E_{\psi,\varphi}|$ of $\E_{\psi,\varphi}$.

In definition \ref{strongintBan} we used the notation $L(f,M)$ but did not give it a name. From now  on, we call it {\em the}
strong operator integral of $f$ or the {\em maximal} strong operator integral of $f$ with respect to the operator measure $M$. 
If $D$ is a linear subspace of $D(f,M)$, we may call the restriction of $L(f,M)$ to $D$ {\em a} strong operator integral. 
Thus for a POVM $\E$, the operator $\tilde L(f,\E)$ is a strong operator integral. In this Hilbert space setting the key to our terminology
is the possibility to use the whole of $\hi$ as a ``test space'': for any $\varphi$ in the appropriate domain, the integral of $f$ 
with respect to $\E_{\psi,\phi}$ for {\em every} $\psi\in\hi$ exists.

\begin{example}\label{ex:strongtestspace}\rm Let $A$ be an unbounded selfadjoint operator in $\hi$ and $\E:\mathcal B(\R)\to\lh$ its spectral measure.
Then $A=L(f,\E)$ and $D(f,\E)=\{\varphi\in\hi\,|\, f \text{ is $\E_{\psi,\varphi}$-integrable
for all $\psi\in\hi$ }\}$ where $f:\R\to\R$ is the identity map. 
Since $\E_{\psi,\varphi}(X)=\overline{\E_{\varphi,\psi}(X)}$, we may also observe that if $\psi\in\hi$,
then $f$ is $\E_{\varphi,\psi}$-integrable for all $\varphi$ in the dense subspace $D(f,\E)$ of $\hi$. But this does not imply that
$D(f,\E)=\hi$. In particular, we see that in Proposition \ref{strongintRefl} it is not enough to assume the $M_{y',x}$-integrability of 
$f$  for all
$y'$ in a dense subspace of $F'$.
\end{example}

The above example may serve as a motivation for considering integration with respect to operator measures where the requirement for 
the test space described before the example is relaxed. This leads us to a host of possibilities for so-called weak operator integrals
whose analysis will be our main concern in the sequel.

\section{Weak operator integrals}\label{weakopint}

Often in physical applications one is led to consider the scalar measures $X\mapsto \E_{\psi,\vp}(X)=\langle \psi |\E(X)\vp\rangle$
related to a Hilbert operator measure $\E$ instead of the vector measures $\E_\vp$. In this section we set up a very general framework for this.
For any vector spaces $\vi_1$, $\vi_2$, a map $S:\,\vi_1\times\vi_2\to\C$ is said to be a {\em sesquilinear form}, or just sesquilinear, if it is
linear in the second and antilinear  (i.e. conjugate linear) in the first argument. Such an $S$ is {\em positive} if $\vi_1=\vi_2$
and $S(\vp,\vp)\geq 0$ for all $\vp\in \vi_1$. Then $S$  satisfies
$ S(\psi,\vp)=\overline{S(\vp,\psi)}$ for all $\psi,\,\vp\in\vi_1$.

Any vector space $\vi$ may be regarded as a dense linear subspace of a Hilbert space $\hi$: take $\hi=\ell^2_K$ where $K$ is a Hamel
basis of $\vi$. In the context of sesquilinear forms there is, however, often  a postulated way the vector space is embedded as a dense
subspace of a Hilbert space. When this is the case, it is clear from the context so that, for example, there is a  given norm and hence a topology on on $\vi$.

Let $\vi\subseteq\hi$ be a dense (linear) subspace of $\hi$ and
$\sv$ the vector space of sesquilinear
forms $S:\,\vi\times\vi\to\C$.
Assume that $\E:\,{\cal A}\to\sv$ is a {\it positive sesquilinear form valued measure,} i.e.\
\begin{enumerate}
\item[(a)] $\E_{\psi,\vp}:\,\cal A\to\C,\;X\mapsto\E_{\psi,\vp}(X):=[\E(X)](\psi,\vp)$, is a complex measure for all $\psi,\,\vp\in\vi$,
\item[(b)] $\E_{\vp,\vp}(X)\ge 0$ for all $\vp\in\vi$ and $X\in\cal A$.
\end{enumerate}
We refer the reader to
\cite{HyPeYl1,HyPeYl2}
for a detailed study of such measures.
Note that any POVM $\E':\,\cal A\to\lh$ defines a unique positive sesquilinear form valued measure $\E:\,\cal A\to\cal S(\hi)$ by setting $[\E(X)](\psi,\vp):=\langle\psi|\E'(X)\vp\rangle$ (thus, in the case of POVMs, we may put $\vi=\hi$ below). We always identify $\E'$ with $\E$ and by an abuse of notation simply write $\E'=\E$.
Throughout this section, $f:\cal A\to\C$ is an $\cal A$-measurable function.

\subsection{Definition}

We begin with the maximal set of pairs $(\psi,\vp)$ for which $\int f \, d\E_{\psi,\vp}$ makes sense:
$$
\formset{f}{\E} := \{ (\psi,\vp) \in \vi\times \vi \mid f \text{ is } \E_{\psi,\vp}\text{--integrable}\}.
$$
Note that $\overline{\E_{\psi,\vp}(X)}\equiv\E_{\vp,\psi}(X)$ by positivity
so that $|\E_{\psi,\vp}|=|\E_{\vp,\psi}|$ and, hence,
$\formset{f}{\E}\subseteq\vi\times \vi$ is symmetric, i.e.\
$ (\psi,\vp) \in \formset{f}{\E}$ implies $ (\vp,\psi) \in \formset{f}{\E}$.
We then put
\begin{equation}\label{predomain}
\predomna{\varphi}:= \{ \psi\in \vi \mid (\psi,\vp)\in \formsetna \}
\end{equation}
for each $\vp\in \vi$. Since
$\E_{\alpha \psi_1+\beta\psi_2,\vp}= \overline\alpha \E_{\psi_1,\vp} +\overline\beta \E_{\psi_2,\vp}$, $\alpha, \beta\in \C$, $\psi_1,\psi_2\in \vi$, it follows that each $\predomna{\vp}\subseteq\vi\subseteq\hi$ is a linear subspace, and the
functional $\psi\mapsto \overline{\int f d\E_{\psi,\varphi}}$ is linear on that subspace. A similar argument shows that
\begin{equation}\label{predomincl}
\predomna{\varphi_1}\cap\predomna{\varphi_2}\subseteq \predomna{\alpha \vp_1+\beta \vp_2}
\end{equation}
for any $\vp_1,\vp_2\in \vi$ and $\alpha ,\beta \in \C$.

We are now interested in (linear) {\it operators} $T:\,\dom(T)\to\hi$ determined by these integrals through $\langle \psi |T\vp\rangle=\int f \, d\E_{\psi,\vp}$. Accordingly, such an operator should have the property that for each $\vp\in\dom(T)$:
$\langle \psi |T\vp\rangle=\int f \, d\E_{\psi,\vp}$, where $\psi$ runs through some subset $\cal S_\vp$ of $\predomna{\varphi}$ which separates the points of $\hi$ in the usual sense of self-duality of $\hi$.
We make this separation requirement  to always guarantee that the vector $T\vp$ is uniquely determined by the integrals $\int f \, d\E_{\psi,\vp}$ via the inner products just mentioned.
Note that here we really want to determine $T\vp$, and the vector $\psi$ is just in an auxiliary role.\footnote{If we would be interested in \emph{sesquilinear forms} rather than operators, then we should consider $\psi$ and $\vp$ in an equal footing. However, here we want to consider \emph{operator} integrals, so the given requirement is clearly the most natural one.} Since each $\predomna{\vp}$ is a linear subspace, the necessarily dense\footnote{Note that the orthogonal complement of a separating subset $\cal S$ is $\hi$, so $\cal S$ generates a dense subspace.} linear span $\cal D_\vp$ of such a separating subset $\cal S_\vp$ is also included in $\predomna{\varphi}$, and by linearity,
$\langle \psi |T\vp\rangle = \int f \, d\E_{\psi,\vp}$ for all $\psi\in\cal D_\vp$. Hence, we can take the separating subsets to be dense subspaces without restricting generality.

The above requirements imply, in particular, that $\dom(T)$ must be a subset of $$\ga{f}{\E}:=\{ \vp\in \vi\mid \predomna{\vp} \text{ is dense in }\hi\}.$$ The requirement of choosing the separating subspaces can now be formulated as follows:
Let $\choices{f}{\E}$ denote the family of maps
\begin{align*}
&\Phi:\ga{f}{\E}\to \{\cal D\subseteq \hi \mid \cal D \text{ is a dense subspace}\},\\ &\Phi(\vp)\subseteq \predomna{\vp} \text{ for all }\vp\in \ga{f}{\E}.
\end{align*}
Note that $\choices{f}{\E}\neq \emptyset$, because an obvious choice is $\Phi(\vp)=\predomna{\vp}$ for all $\vp\in \ga{f}{\E}$. We can now state the definition of a weak operator integral.
\begin{definition}\rm
We say that a linear operator $T:\dom(T)\to \hi$ is a \emph{weak operator integral} of $f$ with respect to $\E$, if $\dom(T)\subseteq \ga{f}{\E}$, and there exists a map $\Phi\in \choices{f}{\E}$, such that
\begin{equation}\label{weak}
\langle \psi |T\vp\rangle = \int f \, d\E_{\psi,\vp}, \quad \text{ for all } \vp\in \dom(T), \, \psi\in \Phi(\vp).
\end{equation}
We then also say that the weak operator integral $T$ is \emph{associated with the map $\Phi$}. For each $\Phi\in \choices{f}{\E}$, we let $\weakops{f}{\E}{\Phi}$ denote the set of weak operator integrals associated with $\Phi$.
\end{definition}
Note that $\ga{f}{\E}$ always contains at least the trivial subspace $\cal D_0 =\{0\}$, so for every choice of $\Phi$ there corresponds at least a trivial weak operator integral.

The choice of the function $\Phi$ is crucial; different choices may correspond to different operators $T$, because on the one hand, dense subspaces can even have trivial intersection, see Section \ref{viimeinenluku} for an example, and on the other hand, different choices can lead to the same operator.
In particular, we have the following result.

\begin{proposition} Let $\E:\cal A\to\lh$ be a POVM.
Each strong operator integral is also a weak operator integral associated with every $\Phi\in \choices{f}{\E}$.
\end{proposition}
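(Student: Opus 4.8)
The plan is to reduce the statement to the single structural feature of strong integration over a Hilbert space: membership of $\varphi$ in the domain $\strgdom{f}{\E}$ of the maximal strong operator integral already forces $f$ to be $\E_{\psi,\varphi}$-integrable against \emph{every} test vector $\psi\in\hi$. Let $T$ be an arbitrary strong operator integral, that is, the restriction of $\strgop{f}{\E}$ to some linear subspace $D\subseteq\strgdom{f}{\E}$, and fix any $\Phi\in\choices{f}{\E}$.

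First I would check the domain condition $\dom(T)=D\subseteq\ga{f}{\E}$. Take $\varphi\in D$. By the characterization of $\strgdom{f}{\E}$ recorded in \eqref{domscal} (equivalently, by Proposition \ref{strongintRefl}, since $\hi$ is reflexive), $f$ is $\E_{\psi,\varphi}$-integrable for every $\psi\in\hi$. By the definition of $\predomna{\varphi}$ this says precisely $\predomna{\varphi}=\hi$, which is of course dense, so $\varphi\in\ga{f}{\E}$. Hence $D\subseteq\ga{f}{\E}$, as required.

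Next I would verify the defining relation \eqref{weak}. Given $\varphi\in D$ and $\psi\in\Phi(\varphi)$, the inclusion $\Phi(\varphi)\subseteq\predomna{\varphi}=\hi$ shows that $\psi$ is simply an arbitrary vector of $\hi$. The second line of \eqref{domscal} gives $\langle\psi|\strgop{f}{\E}\varphi\rangle=\int f\,d\E_{\psi,\varphi}$ for all $\psi\in\hi$, and since $T\varphi=\strgop{f}{\E}\varphi$ by construction, we obtain $\langle\psi|T\varphi\rangle=\int f\,d\E_{\psi,\varphi}$ in particular for every $\psi\in\Phi(\varphi)$. As $\Phi\in\choices{f}{\E}$ was arbitrary, $T\in\weakops{f}{\E}{\Phi}$ for every such $\Phi$.

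I do not expect a genuine obstacle; the entire argument is front-loaded into the equivalence of Proposition \ref{strongintRefl}. The only point deserving care is to note that the strong integral validates the scalar identity against \emph{all} of $\hi$ rather than merely against vectors in some prescribed separating subspace, so that restricting attention to $\Phi(\varphi)\subseteq\hi$ can never break \eqref{weak}. This is exactly the ``whole of $\hi$ as test space'' property emphasized in the discussion surrounding Example \ref{ex:strongtestspace}.
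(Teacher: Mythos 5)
Your proof is correct and follows essentially the same route as the paper's: both rest on the observation from \eqref{domscal} that $\varphi\in\strgdom{f}{\E}$ forces $\predomna{\varphi}=\hi$, so that $\strgdom{f}{\E}\subseteq\ga{f}{\E}$ and \eqref{weak} holds for any $\Phi(\varphi)\subseteq\hi$. Your version merely spells out the details (including the harmless passage to restrictions of the maximal strong integral) that the paper's two-line proof leaves implicit.
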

\begin{proof} According to \eqref{domscal}, the domain of the maximal strong operator integral is given by
$$
\strgdom{f}{\E}=\{\varphi\in \hi\mid \predomna{\vp}=\hi\},
$$
so $\strgdom{f}{\E}\subseteq \ga{f}{\E}$. Given any $\Phi\in \choices{f}{\E}$, equation \eqref{weak} holds because of \eqref{domscal}.
\end{proof}

Now, given a map $\Phi\in \choices{f}{\E}$, we set
$$
\gac{f}{\E}{\Phi}:=\left\{ \varphi\in \ga{f}{\E}\Big| \Phi(\vp)\ni \psi\mapsto \int f\, d\E_{\psi,\vp} \text{ is continuous} \right\},
$$
and use the Frechet-Riesz theorem to define a unique map
\begin{align*}
\preop{f}{\E}{\Phi}:\,\gac{f}{\E}{\Phi}\to \hi, \qquad \langle \psi| \preop{f}{\E}{\Phi}\vp\rangle&=\int f d\E_{\psi,\vp} \, \text{ for all }\psi\in \Phi(\vp).
\end{align*}
Clearly, the domain of any weak operator integral associated with the map $\Phi$ is included in $\gac{f}{\E}{\Phi}$. This observation immediately gives the following characterization.
\begin{proposition}\label{weakconstr} Fix a $\Phi\in \choices{f}{\E}$. Given any subspace $\cal D_0$ of $\hi$, which is included in $\gac{f}{\E}{\Phi}$, the restriction $\preop{f}{\E}{\Phi}|_{\cal D_0}$ is a weak operator integral (with domain $\cal D_0$) associated to $\Phi$. Conversely, every element of $\weakops{f}{\E}{\Phi}$ is obtained this way.
\end{proposition}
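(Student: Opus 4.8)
The plan is to establish the two halves of the characterization separately; I would dispatch the converse first, since it is entirely self-contained, and treat the forward construction afterwards, where the only genuine subtlety lies. For the converse, take any $T\in\weakops{f}{\E}{\Phi}$, so that $T$ is a linear operator with $\dom(T)\subseteq\ga{f}{\E}$ and $\langle\psi|T\vp\rangle=\int f\,d\E_{\psi,\vp}$ for all $\vp\in\dom(T)$ and $\psi\in\Phi(\vp)$. Fixing $\vp\in\dom(T)$, the functional $\Phi(\vp)\ni\psi\mapsto\int f\,d\E_{\psi,\vp}$ coincides with $\psi\mapsto\langle\psi|T\vp\rangle$, which is continuous since $\m{\langle\psi|T\vp\rangle}\le\no{T\vp}\,\no{\psi}$ by the Cauchy--Schwarz inequality. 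Hence $\vp\in\gac{f}{\E}{\Phi}$, so $\dom(T)\subseteq\gac{f}{\E}{\Phi}$ --- this is the inclusion already remarked on before the statement.

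To finish the converse I would identify $T$ with a restriction of $\preop{f}{\E}{\Phi}$. For each $\vp\in\dom(T)$ the vectors $\preop{f}{\E}{\Phi}\vp$ and $T\vp$ satisfy $\langle\psi|\preop{f}{\E}{\Phi}\vp\rangle=\int f\,d\E_{\psi,\vp}=\langle\psi|T\vp\rangle$ for all $\psi\in\Phi(\vp)$, by the defining property of $\preop{f}{\E}{\Phi}$ and by \eqref{weak}; since $\Phi(\vp)$ is dense in $\hi$, this forces $\preop{f}{\E}{\Phi}\vp=T\vp$, whence $T=\preop{f}{\E}{\Phi}|_{\dom(T)}$ with $\dom(T)$ a subspace contained in $\gac{f}{\E}{\Phi}$, as required. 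For the forward direction, given any subspace $\cal D_0\subseteq\gac{f}{\E}{\Phi}$, the restriction $\preop{f}{\E}{\Phi}|_{\cal D_0}$ has domain $\cal D_0\subseteq\gac{f}{\E}{\Phi}\subseteq\ga{f}{\E}$ and satisfies \eqref{weak} verbatim by construction, so the only remaining thing to certify is that this restriction is genuinely a \emph{linear} operator.

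I expect that linearity to be the main obstacle, as it is the single place where the dependence of the test space $\Phi(\vp)$ on $\vp$ intervenes. For $\vp_1,\vp_2\in\cal D_0$ and scalars $\alpha,\beta$, the natural route is to show that the difference $\preop{f}{\E}{\Phi}(\alpha\vp_1+\beta\vp_2)-\alpha\preop{f}{\E}{\Phi}\vp_1-\beta\preop{f}{\E}{\Phi}\vp_2$ is orthogonal to a dense subspace: additivity of the scalar integral together with the predomain inclusion \eqref{predomincl} shows at once that this vector annihilates every $\psi$ in $\Phi(\vp_1)\cap\Phi(\vp_2)\cap\Phi(\alpha\vp_1+\beta\vp_2)$. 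The delicate point is that this triple intersection of dense subspaces need not be dense, so orthogonality to it does not by itself force the difference to vanish. Closing this gap is where I would focus: either by using that each of $\vp_1,\vp_2,\alpha\vp_1+\beta\vp_2$ lies in $\gac{f}{\E}{\Phi}$, so that the three functionals $\langle\cdot|\preop{f}{\E}{\Phi}\vp\rangle$ extend continuously to all of $\hi$ and one can try to propagate the agreement off the intersection, or by appealing to any extra compatibility that $\Phi$ possesses in the cases of interest. The converse never encounters this issue, since there the linearity of $T$ is supplied as a hypothesis.
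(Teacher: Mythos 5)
Your treatment of the converse is correct and is precisely the argument the paper intends: Cauchy--Schwarz gives $\dom(T)\subseteq\gac{f}{\E}{\Phi}$, density of $\Phi(\vp)$ forces $T\vp=\preop{f}{\E}{\Phi}\vp$, and $\dom(T)$ is a subspace because $T$ is linear by hypothesis. For the forward direction the paper offers nothing at all beyond the remark preceding the statement (``This observation immediately gives the following characterization''), i.e.\ it silently takes the linearity of $\preop{f}{\E}{\Phi}|_{\cal D_0}$ for granted. You are right that this is the one nontrivial point, and right to be uneasy: your orthogonality argument only lands in the triple intersection $\Phi(\vp_1)\cap\Phi(\vp_2)\cap\Phi(\alpha\vp_1+\beta\vp_2)$, which need not be dense.

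Neither of the two escape routes you sketch can close this gap for an arbitrary $\Phi\in\choices{f}{\E}$, because linearity can genuinely fail. The reason is that a conjugate-linear functional $\ell_\vp:\psi\mapsto\int f\,d\E_{\psi,\vp}$ which is \emph{discontinuous} on the dense subspace $\predomna{\vp}$ agrees with $\langle\,\cdot\,|v\rangle$ on a dense subspace of $\hi$ for \emph{every} $v\in\hi$ (the kernel of the discontinuous functional $\ell_\vp-\langle\,\cdot\,|v\rangle$ is dense in $\predomna{\vp}$, hence in $\hi$). Concretely, for the PVM $\E(X)=\sum_{n\in X}\kb{e_n}{e_n}$ on $\N$, $f(n)=2^n$ and $\vp=\sum_n2^{-n}e_n$, one gets $\ell_\vp(\psi)=\sum_n\langle\psi|e_n\rangle$ on $\predomna{\vp}=\{\psi\mid\sum_n|\langle\psi|e_n\rangle|<\infty\}$, which is discontinuous; choosing $\Phi(c\vp)$ to be such kernels with representers $u_c$ not depending linearly on $c$ places $\C\vp$ inside $\gac{f}{\E}{\Phi}$ while $\preop{f}{\E}{\Phi}(c\vp)=u_c$ is not homogeneous. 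So the forward implication as stated needs an extra hypothesis, e.g.\ that the triple intersections above are dense for vectors in $\cal D_0$ --- under which your argument via \eqref{predomincl} does complete the proof --- and this holds automatically for the fixed-subspace maps $\Phi_{\cal D_s}$, which are the only choices the paper actually uses later. In short: your converse is complete, your diagnosis of the forward direction is more honest than the paper's own non-proof, and the remaining gap is one of the statement's rather than one you could have closed.
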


Since the intersection of two dense subspaces does not have to be dense (it can even be $\{0\}$), it is clear that $\ga{f}{\E}$, and therefore also $\gac{f}{\E}{\Phi}$ are not themselves linear subspaces, in general. Hence, there is no canonical choice for a maximal weak operator integral associated with a given map $\Phi$. However, it follows immediately from the above proposition that given two operators $T,T'$, such that $T'\subseteq T$ (that is, $\dom(T')\subseteq\dom(T)$ and $T'\vp=T\vp$, $\vp\in\dom(T')$), and $T\in \weakops{f}{\E}{\Phi}$, it follows that $T'\in \weakops{f}{\E}{\Phi}$. In particular, the (nonempty) set $\weakops{f}{\E}{\Phi}$ is partially ordered via the usual operator ordering, or, equivalently, the inclusion of domains. Moreover, every (nonempty) totally ordered subset of $\weakops{f}{\E}{\Phi}$ has an upper bound in $\weakops{f}{\E}{\Phi}$ (the upper bound is obtained by taking the union of the domains of the operators in the chain). Hence, by Zorn's lemma, there exists at least one maximal element in $\weakops{f}{\E}{\Phi}$. We call such an element a \emph{maximal weak operator integral} associated to $\Phi$.

\begin{example}\label{bounded} For a POVM $\E$ and a bounded function $f$, we have $\gac{f}{\E}{\Phi}=\hi$ regardless of the choice of $\Phi$, so every weak operator integral is a restriction of the bounded operator $\int f d\E$ to some subspace.
\end{example}

\begin{example}\label{trivialexample}
Let $\E(X):=\mu(X)I$, where $\mu$ is a probability measure, and let $f$ be a $\mu$--integrable
function. Then $\formset{f}{\E}=\hi\times \hi$, and $\gac{f}{\E}{\Phi} = \hi$, regardless of the choice of $\Phi$, so that weak operator integrals are simply
restrictions of $\vp\mapsto (\int f \, d\mu)\vp$ to some subspaces of $\hi$. If $f$ is not $\mu$--integrable, then $\formset{f}{\E}=\{ (\psi,\vp)\in \hi\times \hi\mid \langle \psi|\vp\rangle =0\}$, and $\predomna{\vp}$ is the orthogonal complement of $\{ \vp\}$. This is dense only for $\vp=0$, so $\ga{f}{\E}=\{0\}$. Hence, there exists only one weak operator integral, which is the zero operator defined on $\{0\}$.
\end{example}

\subsection{Weak operator integrals determined by a fixed separating subspace}

We now look at the class $\weakops{f}{\E}{\Phi}$ with particular choices of $\Phi$.
The canonical choice would be to take, for each $\vp\in \ga{f}{\E}$, the separating subspace to be the maximal one, i.e.\ $\Phi(\vp)=\predomna{\vp}$ for each $\vp$.
However, in practice, it often happens that a fixed dense subspace (of e.g. smooth functions) is fixed. For example, this can be a linear space spanned by some physically relevant orthonormal basis of $\hi$ (e.g.\ the photon number basis of a single mode optical field).

Accordingly, we now investigate the case where a fixed \emph{dense} subspace $\cal D_s$ is given ($s$ stands for separating). For $\vp\in \ga{f}{\E}$ we define $\Phi_{\cal D_s}(\vp)=\cal D_s$ if $\cal D_s\subseteq \predomna{\vp}$ and $\Phi_{\cal D_s}(\vp)=\predomna{\vp}$ otherwise. Then we have

\begin{proposition} The set
$$
\weakD{f}{\E}{\cal D_s}:=\Big\{ \vp\in \hi\Big| \cal D_s\subseteq \predomna{\vp}, \, \cal D_s\ni \psi\mapsto \int f d\E_{\psi,\vp}\in \C\; \text{\rm is continuous}\Big\}
$$
is the domain of a (clearly unique) element $\weakop{f}{\E}{\cal D_s}\in\weakops{f}{\E}{\Phi_{\cal D_s}}$. In the case where $\E$ is a POVM, this operator is an extension of the maximal strong operator integral $\strgop{f}{\E}$.
\end{proposition}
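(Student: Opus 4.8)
The plan is to recognize $\weakD{f}{\E}{\cal D_s}$ as a linear subspace of $\gac{f}{\E}{\Phi_{\cal D_s}}$ and then appeal to Proposition \ref{weakconstr}, which reduces the construction of a weak operator integral with a prescribed domain to verifying two things: that the prescribed set is a subspace, and that it is contained in $\gac{f}{\E}{\Phi_{\cal D_s}}$. First I would check that $\weakD{f}{\E}{\cal D_s}$ is a linear subspace. Given $\vp_1,\vp_2\in\weakD{f}{\E}{\cal D_s}$ and scalars $\alpha,\beta$, the inclusion $\cal D_s\subseteq\predomna{\vp_1}\cap\predomna{\vp_2}$ together with \eqref{predomincl} yields $\cal D_s\subseteq\predomna{\alpha\vp_1+\beta\vp_2}$. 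For the continuity requirement I would use that $\E$ is form-valued, so that $\E_{\psi,\alpha\vp_1+\beta\vp_2}=\alpha\E_{\psi,\vp_1}+\beta\E_{\psi,\vp_2}$ as complex measures; hence $\int f\,d\E_{\psi,\alpha\vp_1+\beta\vp_2}=\alpha\int f\,d\E_{\psi,\vp_1}+\beta\int f\,d\E_{\psi,\vp_2}$ for $\psi\in\cal D_s$, and a linear combination of two functionals continuous on $\cal D_s$ is again continuous. Thus $\alpha\vp_1+\beta\vp_2\in\weakD{f}{\E}{\cal D_s}$.

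Next I would verify the inclusion $\weakD{f}{\E}{\cal D_s}\subseteq\gac{f}{\E}{\Phi_{\cal D_s}}$. If $\vp\in\weakD{f}{\E}{\cal D_s}$, then $\cal D_s\subseteq\predomna{\vp}$, and since $\cal D_s$ is dense this forces $\predomna{\vp}$ to be dense, i.e.\ $\vp\in\ga{f}{\E}$; moreover $\Phi_{\cal D_s}(\vp)=\cal D_s$ by the very definition of $\Phi_{\cal D_s}$, and the continuity of $\psi\mapsto\int f\,d\E_{\psi,\vp}$ on $\cal D_s$ is exactly the defining condition for membership in $\gac{f}{\E}{\Phi_{\cal D_s}}$. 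With these two facts in hand, Proposition \ref{weakconstr} applied to the subspace $\weakD{f}{\E}{\cal D_s}$ produces $\weakop{f}{\E}{\cal D_s}:=\preop{f}{\E}{\Phi_{\cal D_s}}|_{\weakD{f}{\E}{\cal D_s}}\in\weakops{f}{\E}{\Phi_{\cal D_s}}$ with the stated domain. Uniqueness is then immediate: for $\vp$ in the domain the value $\weakop{f}{\E}{\cal D_s}\vp$ is pinned down by $\langle\psi|\weakop{f}{\E}{\cal D_s}\vp\rangle=\int f\,d\E_{\psi,\vp}$ for all $\psi\in\cal D_s$, and a dense subspace separates the points of $\hi$.

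For the final assertion, I would assume $\E$ is a POVM and recall from \eqref{domscal} that $\strgdom{f}{\E}=\{\vp\in\hi\mid\predomna{\vp}=\hi\}$. If $\vp\in\strgdom{f}{\E}$ then $\cal D_s\subseteq\hi=\predomna{\vp}$, and the functional $\psi\mapsto\int f\,d\E_{\psi,\vp}=\langle\psi|\strgop{f}{\E}\vp\rangle$ is continuous on all of $\hi$, hence on $\cal D_s$; therefore $\vp\in\weakD{f}{\E}{\cal D_s}$, giving $\strgdom{f}{\E}\subseteq\weakD{f}{\E}{\cal D_s}$. On this smaller domain both operators satisfy $\langle\psi|\cdot\,\vp\rangle=\int f\,d\E_{\psi,\vp}$ for every $\psi\in\cal D_s$, so they coincide by the separating property of $\cal D_s$, which yields $\strgop{f}{\E}\subseteq\weakop{f}{\E}{\cal D_s}$, i.e.\ $\weakop{f}{\E}{\cal D_s}$ extends the maximal strong operator integral.

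The argument is essentially bookkeeping, with the real work already carried out in Proposition \ref{weakconstr}. The one point that genuinely requires care is the continuity half of the subspace property: it is here that the sesquilinear (form-valued) structure of $\E$ is used, allowing linearity in the form argument to be passed through the scalar integral, and it is this that makes $\weakD{f}{\E}{\cal D_s}$ a subspace even though $\gac{f}{\E}{\Phi_{\cal D_s}}$ need not be.
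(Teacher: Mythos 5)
Your proof is correct and follows essentially the same route as the paper: establish that $\weakD{f}{\E}{\cal D_s}$ is a linear subspace of $\gac{f}{\E}{\Phi_{\cal D_s}}$ using \eqref{predomincl} and linearity of $\vp\mapsto\E_{\psi,\vp}$, invoke Proposition \ref{weakconstr}, and check $\strgdom{f}{\E}\subseteq\weakD{f}{\E}{\cal D_s}$ via $\predomna{\vp}=\hi$. Your write-up is in fact slightly more explicit than the paper's on the continuity half of the POVM inclusion and on why the two operators agree on the common domain.
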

\begin{proof} Clearly,
$
\weakD{f}{\E}{\cal D_s}=\big\{ \vp\in \gac{f}{\E}{\Phi_{\cal D_s}} \big| \cal D_s\subseteq \predomna{\vp}\big\};
$
in particular, $\weakD{f}{\E}{\cal D_s}$ is a subset of $\gac{f}{\E}{\Phi_{\cal D_s}}$. We have to show that it is a linear space. Let $\vp_1,\vp_2\in \weakD{f}{\E}{\cal D_s}$, and $\alpha ,\beta \in \C$. Now $\cal D_s \subseteq \predomna{\vp_1}\cap\predomna{\vp_2}\subseteq \predomna{\alpha \vp_1+\beta \vp_2}$ (see \eqref{predomincl}); in particular, the latter is dense, so $\alpha \vp_1+\beta \vp_2\in \ga{f}{\E}$, and $\Phi(\alpha \vp_1+\beta \vp_2)=\cal D_s$. Since $\psi\mapsto \int f d\E_{\psi,\vp_i}$ is continuous on $\cal D_s$ for $i=1,2$, then $\psi\mapsto \int f d\E_{\psi,\alpha \vp_1+\beta \vp_2}$ is continuous on $\cal D_s$. Hence, $\alpha \vp_1+\beta \vp_2\in \gac{f}{\E}{\Phi_{\cal D_s}}$. We have shown that $\weakD{f}{\E}{\cal D_s}$ is a linear space. By Proposition \ref{weakconstr}, the restriction of $\preop{f}{\E}{\Phi_{\cal D_s}}$ to $\weakD{f}{\E}{\cal D_s}$ is an element of $\weakops{f}{\E}{\Phi_{\cal D_s}}$. It remains to prove that in the case where $\E$ is a POVM, the domain of the maximal strong operator integral is included in $\weakD{f}{\E}{\cal D_s}$. But this is clear because for $\vp\in \strgdom{f}{\E}$, we have $\cal D_s \subseteq \hi=\predomna{\vp}$, regardless of  $\cal D_s$.
\end{proof}

Since
$\strgop{f}{\E}\subseteq \weakop{f}{\E}{\cal D_s}$ for any POVM $\E$, one can ask when these two operators are the same. Since $\|\eta\|=\sup\{|\langle \psi|\eta\rangle| \mid \psi\in \cal D_s,\, \|\psi\|\leq 1\}$ (as $\cal D_s$ is dense), the following result is a direct consequence of \cite[Theorem 3.5]{Ylinen} (see also \cite{Lewis70}).

\begin{proposition} Suppose $\E$ is a POVM, and let $\cal D_s\subseteq \hi$ be a dense subspace. Then $\strgop{f}{\E}=\weakop{f}{\E}{\cal D_s}$ if and only if for each $\vp\in \weakD{f}{\E}{\cal D_s}$, we have
$$
\lim_{n\rightarrow\infty} \sup_{\psi\in \cal D_s, \,\|\psi\|\leq 1} \int_{X_n} |f| d|\E_{\psi,\vp}|=0
$$
whenever the sets $X_n\in \h A$ satisfy $X_{n+1}\subseteq X_{n}$, $n\in \N$, and $\cap_n X_n=\emptyset$.
\end{proposition}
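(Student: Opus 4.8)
The plan is to exploit the fact, established in the preceding proposition, that for a POVM one always has $\strgop{f}{\E}\subseteq\weakop{f}{\E}{\cal D_s}$. Consequently the two operators coincide exactly when their domains do, and since $\strgdom{f}{\E}\subseteq\weakD{f}{\E}{\cal D_s}$ holds automatically, equality is equivalent to the reverse inclusion $\weakD{f}{\E}{\cal D_s}\subseteq\strgdom{f}{\E}$. This reduces the whole statement to a pointwise criterion: for a fixed $\vp\in\weakD{f}{\E}{\cal D_s}$, decide when $\vp$ already lies in $\strgdom{f}{\E}$.

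First I would recall that $\vp\in\strgdom{f}{\E}$ means precisely that $f$ is integrable with respect to the $\hi$-valued vector measure $\E_\vp:X\mapsto\E(X)\vp$ (Definition \ref{strongintBan} applied with $M=\E$). The hypothesis $\vp\in\weakD{f}{\E}{\cal D_s}$ supplies exactly the two ingredients needed to test this: $f$ is $\E_{\psi,\vp}$-integrable for every $\psi\in\cal D_s$, and the linear functional $\cal D_s\ni\psi\mapsto\int f\,d\E_{\psi,\vp}$ is bounded. Here the scalar measures $\E_{\psi,\vp}$ are precisely the scalarizations $(\E_\vp)_\psi$ of the vector measure $\E_\vp$ by the functionals $\psi\in\cal D_s\subseteq\hi=\hi'$.

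The heart of the argument is then an appeal to \cite[Theorem 3.5]{Ylinen}. Because $\cal D_s$ is dense, its unit ball is a norming set for $\hi$, that is, $\|\eta\|=\sup\{|\langle\psi|\eta\rangle|\mid\psi\in\cal D_s,\,\|\psi\|\le1\}$. Ylinen's theorem characterizes $\E_\vp$-integrability of $f$, given scalar integrability against a norming family of functionals and boundedness of the associated functional, by the uniform smallness of the variation integrals over decreasing sequences of sets shrinking to the empty set. Applying it to $\mu=\E_\vp$ with the norming set furnished by $\cal D_s$, I obtain that $f$ is $\E_\vp$-integrable (equivalently $\vp\in\strgdom{f}{\E}$) if and only if
$$
\lim_{n\to\infty}\sup_{\psi\in\cal D_s,\,\|\psi\|\le1}\int_{X_n}|f|\,d|\E_{\psi,\vp}|=0
$$
for all $X_n\in\cal A$ with $X_{n+1}\subseteq X_n$ and $\cap_n X_n=\emptyset$; the total variations $|\E_{\psi,\vp}|$ are exactly the quantities entering Ylinen's condition. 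Quantifying over all $\vp\in\weakD{f}{\E}{\cal D_s}$ and combining with the reduction of the first paragraph yields the stated equivalence.

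The step I expect to require the most care is the precise matching of hypotheses in the citation: one must confirm that boundedness of $\psi\mapsto\int f\,d\E_{\psi,\vp}$ on the dense subspace $\cal D_s$, together with the norming property of its unit ball, is exactly the input \cite[Theorem 3.5]{Ylinen} demands, and that the theorem's uniform countable additivity condition is phrased in terms of the total variations $|\E_{\psi,\vp}|$ as written. Once this alignment is verified, no further computation is needed, since the entire analytic content is carried by the cited result.
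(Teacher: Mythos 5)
Your proposal is correct and follows essentially the same route as the paper: the paper likewise reduces the question to the domain inclusion via $\strgop{f}{\E}\subseteq \weakop{f}{\E}{\cal D_s}$, invokes the norming property $\|\eta\|=\sup\{|\langle \psi|\eta\rangle| \mid \psi\in \cal D_s,\, \|\psi\|\leq 1\}$ of the dense subspace $\cal D_s$, and then cites \cite[Theorem 3.5]{Ylinen} (with \cite{Lewis70}) for the characterization of $\E_\vp$-integrability in terms of the stated uniform vanishing condition. Your additional care about matching the hypotheses of the cited theorem is exactly the only nontrivial point, and the paper treats it the same way.
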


\subsection{Symmetric weak operator integrals}

Since the integrals $\int f d\E_{\psi,\vp}$ are symmetric in the sense that $(\psi,\vp)\in \formsetna$ implies $(\vp,\psi)\in \formsetna$, and $\overline{\int f d\E_{\psi,\vp}}= \int \overline{f} d\E_{\vp,\psi}$, it is natural to ask when a weak operator integral is a symmetric operator. We will not look at the most general case, but concentrate on the elements of $\weakops{f}{\E}{\Phi_{\cal D_s}}$, with the fixed separating subspace $\cal D_s\subseteq \h V$. Since continuity properties of the integral $\int f d\E_{\psi,\vp}$ with respect to the vectors $\vp,\psi$ are rather weak (even in the case where $\E$ is POVM), knowing that
$$
\langle \psi | \weakop{f}{\E}{\cal D_s}\vp\rangle = \int f d\E_{\psi,\vp}=\overline{\int \overline{f} d\E_{\vp,\psi}}
$$
for all $\psi\in \cal D_s$, $\vp\in \weakD{f}{\E}{\cal D_s}$, is not obviously enough to connect this to the case where $\vp\in \cal D_s$ and $\psi\in \weakD{f}{\E}{\cal D_s}$. Therefore, we now assume that the dense subspace $\cal D_s$ satisfies the equivalent conditions of the following trivial lemma.
\begin{lemma}\label{triv} Let $\cal D_s\subseteq \h V$ be a subspace. Then
$
\cal D_s\subseteq \{\vp\in \hi\mid \cal D_s\subseteq \predomna{\vp}\}
$
if and only if $\cal D_s\times \cal D_s\subseteq \formsetna$.
\end{lemma}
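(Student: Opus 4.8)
The plan is to prove the equivalence by simply unwinding the definitions: both sides turn out to be verbatim restatements of one and the same quantified assertion, which is exactly why the lemma deserves the label ``trivial''.

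First I would recall that, for $\vp\in\cal D_s\subseteq\h V$, the set $\predomna{\vp}$ is well defined and, by its definition \eqref{predomain}, the membership $\psi\in\predomna{\vp}$ means precisely that $(\psi,\vp)\in\formsetna$. Consequently the inclusion $\cal D_s\subseteq\predomna{\vp}$ is nothing but the assertion that $(\psi,\vp)\in\formsetna$ for every $\psi\in\cal D_s$.

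Next I would unfold the left-hand condition. The statement $\cal D_s\subseteq\{\vp\in\hi\mid\cal D_s\subseteq\predomna{\vp}\}$ says exactly that every $\vp\in\cal D_s$ satisfies $\cal D_s\subseteq\predomna{\vp}$; combining this with the observation of the previous step, it is equivalent to requiring $(\psi,\vp)\in\formsetna$ for all $\psi,\vp\in\cal D_s$. On the other hand, the right-hand condition $\cal D_s\times\cal D_s\subseteq\formsetna$ states, by the definition of the Cartesian product, precisely that $(\psi,\vp)\in\formsetna$ for every pair $(\psi,\vp)\in\cal D_s\times\cal D_s$, i.e.\ for all $\psi,\vp\in\cal D_s$. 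The two conditions therefore coincide, and the equivalence follows.

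The only point worth flagging is the apparent asymmetry between the two sides—the left one quantifies as ``for each $\vp$, for each $\psi$'' whereas the right one quantifies over pairs. This asymmetry is illusory, since both universal quantifiers range over the same subspace $\cal D_s$ and may be interchanged freely; in particular the symmetry of $\formsetna$ (noted just after the definition of $\formset{f}{\E}$) is not needed here. Hence there is no genuine obstacle: the proof is a pure reformulation.
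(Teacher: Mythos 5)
Your proof is correct and is exactly the definitional unwinding the paper has in mind — the paper itself calls the lemma ``trivial'' and offers no separate argument. Nothing to add.
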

\begin{proposition}\label{weaksymdef} Suppose that $\cal D_s\subseteq \h V$ is a dense subspace satisfying $\cal D_s\times \cal D_s\subseteq \formsetna$. We define a (clearly unique) operator $\weaksymop{f}{\E}{\cal D_s}$ whose domain and action are given by
\begin{align*}
\weaksymD{f}{\E}{\cal D_s}&:=\Big\{\vp \in \cal D_s\Big| \cal D_s\ni\psi\mapsto \int f\, d\E_{\psi,\vp}\in \C\; \text{\rm is continuous}\Big\},\\
\langle \psi|\weaksymop{f}{\E}{\cal D_s}\vp\rangle &=\int f d\E_{\psi,\vp}, \qquad \psi\in \cal D_s,\;\vp\in \weaksymD{f}{\E}{\cal D_s}.
\end{align*}
Then
$\weaksymop{f}{\E}{\cal D_s}\subseteq \weakop{f}{\E}{\cal D_s}.$
In particular, $\weaksymop{f}{\E}{\cal D_s}$ is a weak operator integral, with $\weaksymop{f}{\E}{\cal D_s}\in \weakops{f}{\E}{\Phi_{\cal D_s}}$. Moreover, if $f$ is real-valued, then $\weaksymop{f}{\E}{\cal D_s}$ is a symmetric operator.
\end{proposition}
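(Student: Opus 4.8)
The plan is to reduce the whole statement to the operator $\weakop{f}{\E}{\cal D_s}$ constructed in the previous proposition, by first identifying the domain as $\weaksymD{f}{\E}{\cal D_s} = \cal D_s \cap \weakD{f}{\E}{\cal D_s}$. Indeed, for $\vp \in \cal D_s$ the standing hypothesis $\cal D_s \times \cal D_s \subseteq \formsetna$, read through Lemma \ref{triv}, already yields $\cal D_s \subseteq \predomna{\vp}$; so among vectors of $\cal D_s$ the only remaining condition in the definition of $\weaksymD{f}{\E}{\cal D_s}$ is continuity of $\cal D_s \ni \psi \mapsto \int f\, d\E_{\psi,\vp}$, which is exactly the condition cutting out $\weakD{f}{\E}{\cal D_s}$. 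First I would verify this equality of sets, whereupon linearity of the domain is immediate, since $\cal D_s$ is a subspace by assumption and $\weakD{f}{\E}{\cal D_s}$ is one by the previous proposition. The operator $\weaksymop{f}{\E}{\cal D_s}$ is then the restriction of $\weakop{f}{\E}{\cal D_s}$ (equivalently, of the Frechet--Riesz map $\preop{f}{\E}{\Phi_{\cal D_s}}$) to this subspace: for each such $\vp$ both send $\vp$ to the unique vector $\eta$ with $\langle \psi | \eta\rangle = \int f\, d\E_{\psi,\vp}$ for all $\psi \in \cal D_s$, uniqueness being guaranteed by the density of $\cal D_s$. This simultaneously establishes the asserted uniqueness and the inclusion $\weaksymop{f}{\E}{\cal D_s} \subseteq \weakop{f}{\E}{\cal D_s}$.

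Membership in $\weakops{f}{\E}{\Phi_{\cal D_s}}$ then costs nothing: we have $\weakop{f}{\E}{\cal D_s} \in \weakops{f}{\E}{\Phi_{\cal D_s}}$, and the restriction remark following Proposition \ref{weakconstr} shows that any restriction of a weak operator integral associated to $\Phi_{\cal D_s}$ to a subspace of its domain is again such an integral. Hence $\weaksymop{f}{\E}{\cal D_s} \in \weakops{f}{\E}{\Phi_{\cal D_s}}$.

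For the symmetry claim I would take $\psi, \vp \in \weaksymD{f}{\E}{\cal D_s}$ and exploit that the domain lies inside $\cal D_s$, so that each of $\psi$ and $\vp$ is at once a domain vector and an admissible test vector. Writing $T = \weaksymop{f}{\E}{\cal D_s}$, this yields both $\langle \psi | T\vp\rangle = \int f\, d\E_{\psi,\vp}$ and $\langle \vp | T\psi\rangle = \int f\, d\E_{\vp,\psi}$; conjugating the second and invoking $\overline{\int f\, d\E_{\vp,\psi}} = \int \overline f\, d\E_{\psi,\vp}$ (from $\overline{\E_{\vp,\psi}(X)} = \E_{\psi,\vp}(X)$) together with $\overline f = f$ gives $\langle T\psi | \vp\rangle = \int f\, d\E_{\psi,\vp} = \langle \psi | T\vp\rangle$, i.e.\ symmetry. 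The one delicate point --- and the very reason for introducing this symmetric variant --- is precisely this last computation: the identity $\langle \psi | T\vp\rangle = \langle T\psi | \vp\rangle$ needs both scalar integrals $\int f\, d\E_{\psi,\vp}$ and $\int f\, d\E_{\vp,\psi}$ to be available and to represent $T$, which forces the domain into $\cal D_s$ rather than the larger $\weakD{f}{\E}{\cal D_s}$. Since $\cal D_s \times \cal D_s \subseteq \formsetna$ secures exactly this, no real obstacle remains once the identification $\weaksymD{f}{\E}{\cal D_s} = \cal D_s \cap \weakD{f}{\E}{\cal D_s}$ is in place.
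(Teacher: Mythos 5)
Your proof is correct and follows essentially the same route as the paper's: both reduce the first two claims to showing $\weaksymD{f}{\E}{\cal D_s}\subseteq \weakD{f}{\E}{\cal D_s}$ (your sharper identification $\weaksymD{f}{\E}{\cal D_s}=\cal D_s\cap \weakD{f}{\E}{\cal D_s}$ is the same observation), invoke Proposition \ref{weakconstr} and the restriction remark for membership in $\weakops{f}{\E}{\Phi_{\cal D_s}}$, and prove symmetry by the identical conjugation computation using $\overline{\E_{\vp,\psi}}=\E_{\psi,\vp}$ and $f=\overline{f}$.
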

\begin{proof} It is clear that $\weaksymop{f}{\E}{\cal D_s}$ is a well-defined operator on the given domain $\weaksymD{f}{\E}{\cal D_s}$. (Note that the condition $\cal D_s\times \cal D_s\subseteq \formsetna$ ensures that the integral is defined.) We now show that $\weaksymD{f}{\E}{\cal D_s}\subseteq \weakD{f}{\E}{\cal D_s}$, which by Proposition \ref{weakconstr} implies that $\weaksymop{f}{\E}{\cal D_s}\in \weakops{f}{\E}{\Phi_{\cal D_s}}$ and $\weaksymop{f}{\E}{\cal D_s}\subseteq \weakop{f}{\E}{\cal D_s}$. Accordingly, let $\vp\in \weaksymD{f}{\E}{\cal D_s}$. In particular, $\vp\in \cal D_s$. Since $\cal D_s\times \cal D_s\subseteq \formsetna$, we have $\cal D_s\subseteq \predomna{\vp}$. But $\psi\mapsto \int f \, d\E_{\psi,\vp}$ is continuous on $\cal D_s$, so $\vp\in \weakD{f}{\E}{\cal D_s}$. It remains to show that $\weaksymop{f}{\E}{\cal D_s}$ is symmetric if $f$ is real-valued. For that, let $\psi,\vp\in \weaksymD{f}{\E}{\cal D_s}$. Then both of them are also in $\cal D_s$. Hence,
\begin{align*}
\langle \psi |\weaksymop{f}{\E}{\cal D_s}\vp\rangle = \int f\,d\E_{\psi,\vp}=\overline{\int f\, d\E_{\vp,\psi}}
=\overline{\langle \vp |\weaksymop{f}{\E}{\cal D_s}\psi\rangle}=\langle \weaksymop{f}{\E}{\cal D_s}\psi|\vp\rangle.
\end{align*}
\end{proof}

We call an operator $\weaksymop{f}{\E}{\cal D_s}$ \emph{symmetric weak operator integral} determined by $\cal D_s$. (Even in the case where $f$ is not real valued.)

\begin{example}
Suppose that $\hi$ is separable, let $\{\vp_n\}_{n\in \N}$ be an orthonormal basis of $\hi$, and put $\vi:={\rm lin}\{\vp_n\,|\,n\in \N\}$,
and $\E:\,\cal A\to\sv$ a positive sesquilinear form measure.
Let $f:\,\cal A\to\C$ be such that
\begin{equation}\label{continuity}
\sum_{n\in \N}\big|\int f d\E_{\vp_n,\vp_m}\big|^2<\infty \text{ for all } m\in \N.
\end{equation}
In particular, $\int f d\E_{\vp_n,\vp_m}$ exists for all $n,m\in \N$, that is, $(\vp_n,\vp_m)\in \formsetna$ for each $n$. By sesquilinearity, it follows that $(\psi,\vp)\in \formsetna$ for all $\psi,\vp\in \vi$, i.e. $\formsetna= \vi \times \vi$. Hence, $\vi$ itself satisfies the conditions of Proposition \ref{weaksymdef}, and we have the symmetric weak operator integral $\weaksymop{f}{\E}{\vi}$.
It now follows from \eqref{continuity} that for each $m\in \N$,
$$
\vi\ni\psi\mapsto\int f d\E_{\psi,\vp_m}=\sum_{n\in N}\<\psi|\vp_n\>
\int f d\E_{\vp_n,\vp_m}
\in\C
$$
is continuous. Since each $\vp\in \vi$ is a (finite) linear combination of the vectors $\vp_m$, the continuity holds for each $\vp\in \vi$. Hence, the domain of the symmetric weak operator integral $\weaksymop{f}{\E}{\vi}$ is the whole of $\vi$, and its action is determined by
$$
\weaksymop{f}{\E}{\vi}\vp_m=\sum_{n\in N} \left(\int f d\E_{\vp_n,\vp_m}\right) \vp_n, \text{ for all } m\in \N.
$$
Of course, an operator defined via this same formula may have a larger domain; for example, if
$$
\E_{\vp_n,\vp_m}(X)=\delta_{nm}\mu_n(X),\qquad n,m\in N,\;
X\in\cal A,
$$
where $\delta_{nm}$ is the Kronecker delta and
 $\{\mu_n\}$ is a sequence\footnote{Obviously, $\E$ defines a POVM if and only if
$\sup_{n\in\N} \mu_n(\Omega)<\infty.$}
 of bounded positive measures on $\cal A\subseteq 2^\Omega$ then $\int f d\E_{\vp_n,\vp_m}=\delta_{nm} f_m$, where $f_m:=\int f d\mu_m$, and
the largest possible domain of an extension of the weak operator integral $\weaksymop{f}{\E}{\vi}$ is
$\{\vp\in\hi | \sum_m |f_m\<\vp_m|\vp\>|^2<\infty\}$. Note that this extension is bounded if and only if $\sup_{m\in\N}|f_m|<\infty$. However, the extension is \emph{not} a weak operator integral, because its domain is larger than the form domain $\vi$ of the sesquilinear form valued measure $\E$.
\end{example}

We immediately notice the following
\begin{proposition}\label{strgweakprop} Suppose $\E$ is a POVM, and the strong operator integral $\strgop{f}{\E}$ is densely defined. Set $\cal D_s=\strgdom{f}{\E}$. Then $\strgop{f}{\E}=\weaksymop{f}{\E}{\cal D_s}$, i.e.\ the strong operator integral is the symmetric weak operator integral determined by its domain.
\end{proposition}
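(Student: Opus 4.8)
The plan is to reduce everything to the single defining relation $\langle\psi|\strgop{f}{\E}\vp\rangle=\int f\,d\E_{\psi,\vp}$ (valid for \emph{all} $\psi\in\hi$) together with the density of $\cal D_s$, and to check the two inclusions $\strgop{f}{\E}\subseteq\weaksymop{f}{\E}{\cal D_s}$ and $\weaksymop{f}{\E}{\cal D_s}\subseteq\strgop{f}{\E}$ separately. Before doing so, I would first confirm that $\weaksymop{f}{\E}{\cal D_s}$ is even defined, i.e.\ that the hypotheses of Proposition \ref{weaksymdef} hold for $\cal D_s=\strgdom{f}{\E}$. For this I would invoke the identity $\strgdom{f}{\E}=\{\vp\in\hi\mid\predomna{\vp}=\hi\}$ established in the proof that every strong operator integral is a weak one. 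It shows that for each $\vp\in\cal D_s$ we have $\predomna{\vp}=\hi\supseteq\cal D_s$, whence $\cal D_s\times\cal D_s\subseteq\formsetna$; moreover $\cal D_s$ is dense because $\strgop{f}{\E}$ is assumed densely defined. Thus Proposition \ref{weaksymdef} applies and $\weaksymop{f}{\E}{\cal D_s}$ is a well-defined (symmetric, if $f$ is real) operator.

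For the inclusion $\strgop{f}{\E}\subseteq\weaksymop{f}{\E}{\cal D_s}$, I would take $\vp\in\strgdom{f}{\E}=\cal D_s$ and observe that the functional $\cal D_s\ni\psi\mapsto\int f\,d\E_{\psi,\vp}$ coincides with $\psi\mapsto\langle\psi|\strgop{f}{\E}\vp\rangle$. The crucial (and essentially only) point of the whole argument is that this makes the continuity requirement in the definition of $\weaksymD{f}{\E}{\cal D_s}$ automatic: the map is an inner product against the fixed vector $\strgop{f}{\E}\vp$, hence continuous. Therefore $\vp\in\weaksymD{f}{\E}{\cal D_s}$, and since $\langle\psi|\weaksymop{f}{\E}{\cal D_s}\vp\rangle=\int f\,d\E_{\psi,\vp}=\langle\psi|\strgop{f}{\E}\vp\rangle$ for all $\psi\in\cal D_s$, the actions agree on the common domain.

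For the reverse inclusion I would simply note that, by the very definition of its domain, $\weaksymD{f}{\E}{\cal D_s}\subseteq\cal D_s=\strgdom{f}{\E}$, so every $\vp$ in the former domain lies in $\dom(\strgop{f}{\E})$. Combined with the previous paragraph this yields $\weaksymD{f}{\E}{\cal D_s}=\strgdom{f}{\E}$. On this common domain both operators satisfy $\langle\psi|\,\cdot\,\vp\rangle=\int f\,d\E_{\psi,\vp}$ for every $\psi\in\cal D_s$, and since $\cal D_s$ is dense this uniquely pins down the image vector; hence $\weaksymop{f}{\E}{\cal D_s}\vp=\strgop{f}{\E}\vp$ for each such $\vp$, giving the claimed equality. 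I do not expect any genuine obstacle here: the argument is a bookkeeping exercise in the definitions, the only thing one must be careful not to skip being the verification of the Proposition \ref{weaksymdef} hypotheses and the recognition that the continuity half of the domain condition comes for free from the inner-product representation $\int f\,d\E_{\psi,\vp}=\langle\psi|\strgop{f}{\E}\vp\rangle$.
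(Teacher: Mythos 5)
Your proof is correct and follows essentially the same route as the paper's: verify that $\cal D_s\times\cal D_s\subseteq\formsetna$ so that Proposition \ref{weaksymdef} applies, observe that for $\vp\in\strgdom{f}{\E}$ the functional $\psi\mapsto\int f\,d\E_{\psi,\vp}=\langle\psi|\strgop{f}{\E}\vp\rangle$ is continuous on all of $\hi$ and hence on $\cal D_s$, and note that the reverse domain inclusion $\weaksymD{f}{\E}{\cal D_s}\subseteq\cal D_s$ holds by definition. Your write-up is merely more explicit about the density of $\cal D_s$ and the uniqueness of the image vector, both of which the paper leaves implicit.
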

\begin{proof} If $\vp\in \cal D_s$ then $\int f d\E_{\psi,\vp}$ exists for all $\psi\in \hi$, so $\cal D_s\times\cal D_s\subseteq \hi\times \cal D_s\subseteq \formsetna$. Hence, $\weaksymop{f}{\E}{\cal D_s}$ is defined. Moreover, if $\vp \in \cal D_s$ then $\psi\mapsto \int fd\E_{\psi,\vp}$ is continuous on the whole $\hi$, and hence also on the subspace $\cal D_s$. Thus $\cal D_s\subseteq \weaksymD{f}{\E}{\cal D_s}\subseteq \cal D_s$, and the proof is complete.
\end{proof}

Hence, the domain of the strong operator integral, when dense, is one choice for a separating subspace $\cal D_s$ of a weak operator integral when $\E$ is a POVM. It it easy to see that even in the general case there is a \emph{maximal choice} for this subspace, which can be explicitly written down:
\begin{proposition}\label{formdomdef} The set
$$
\formdom{f}{\E}:=\Big\{\vp\in\hi\Big| \int |f| d\E_{\vp,\vp}<\infty\Big\}=\{\vp\in\hi\mid \vp\in \predomna{\vp}\}
$$
is the largest subspace $\cal D\subseteq\hi$ such that
$\cal D\times\cal D\subseteq \formset{f}{\E}$ (in the sense that any subspace $\cal D$ with this property, is included in $\formdom{f}{\E}$).
\end{proposition}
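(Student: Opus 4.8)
The plan is to prove the statement in four steps: the displayed equality, the fact that $\formdomna$ is a linear subspace, the inclusion $\formdomna\times\formdomna\subseteq\formsetna$, and finally maximality. The equality is essentially a matter of unwinding definitions together with positivity. By condition (b), $\E_{\vp,\vp}$ is a finite positive measure, and for such a measure $f$ is integrable exactly when $\int|f|\,d\E_{\vp,\vp}<\infty$. Since $\vp\in\predomna{\vp}$ means precisely $(\vp,\vp)\in\formsetna$, i.e.\ that $f$ is $\E_{\vp,\vp}$-integrable, the two set descriptions coincide.

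Next I would check that $\formdomna$ is a subspace. Scaling is immediate from $\E_{\alpha\vp,\alpha\vp}=|\alpha|^2\E_{\vp,\vp}$. For sums, fix $\vp,\psi\in\formdomna$ and apply Lemma \ref{formineq} to the positive sesquilinear form $(\eta,\zeta)\mapsto\E_{\eta,\zeta}(X)$ for each fixed $X\in\cal A$, which gives the setwise estimate $\E_{\vp+\psi,\vp+\psi}(X)\le 2\E_{\vp,\vp}(X)+2\E_{\psi,\psi}(X)$. The routine but essential fact here is that setwise domination of finite positive measures is inherited by integrals of the nonnegative function $|f|$; hence $\int|f|\,d\E_{\vp+\psi,\vp+\psi}\le 2\int|f|\,d\E_{\vp,\vp}+2\int|f|\,d\E_{\psi,\psi}<\infty$, so $\vp+\psi\in\formdomna$.

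The heart of the argument is the inclusion $\formdomna\times\formdomna\subseteq\formsetna$, i.e.\ upgrading diagonal integrability to integrability against the off-diagonal complex measures $\E_{\psi,\vp}$. Here I would use polarization: for every $X$,
\[
\E_{\psi,\vp}(X)=\tfrac14\sum_{k=0}^3 i^{-k}\,\E_{\psi+i^k\vp,\,\psi+i^k\vp}(X),
\]
so $\E_{\psi,\vp}$ is a finite complex-linear combination of the positive measures $\E_{\eta_k,\eta_k}$ with $\eta_k:=\psi+i^k\vp$. Since $\formdomna$ is a subspace, each $\eta_k$ lies in it, so $f$ is integrable against every $\E_{\eta_k,\eta_k}$; subadditivity of the total variation under linear combinations then yields $\int|f|\,d|\E_{\psi,\vp}|<\infty$, that is, $(\psi,\vp)\in\formsetna$. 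Alternatively one can derive the setwise bound $|\E_{\psi,\vp}|(X)\le\tfrac12\E_{\psi,\psi}(X)+\tfrac12\E_{\vp,\vp}(X)$ from the Cauchy--Schwarz inequality applied over partitions of $X$, but the polarization route reuses the subspace step more directly.

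Maximality is then immediate: if $\cal D$ is any subspace with $\cal D\times\cal D\subseteq\formsetna$, then for $\vp\in\cal D$ the diagonal pair $(\vp,\vp)$ lies in $\formsetna$, so $f$ is $\E_{\vp,\vp}$-integrable and $\vp\in\formdomna$; thus $\cal D\subseteq\formdomna$. I expect the genuine obstacle to be the diagonal-to-off-diagonal step, which is the only place positivity of $\E$ is used in an essential way, together with the elementary monotonicity and subadditivity properties of integration against positive and complex measures.
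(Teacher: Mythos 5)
Your proposal is correct and follows essentially the same route as the paper's proof: the subspace property of $\formdomna$ via Lemma \ref{formineq}, the polarization of $\E_{\psi,\vp}$ into the four positive measures $\E_{\psi+i^k\vp,\psi+i^k\vp}$ to pass from diagonal to off-diagonal integrability, and the trivial maximality argument. You merely spell out a few details (the equivalence of the two set descriptions, and the setwise monotonicity step behind the subspace claim) that the paper leaves implicit.
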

\begin{proof}
The fact that the set $\formdom{f}{\E}$ is a linear subspace of $\hi$ follows immediately from Lemma \ref{formineq}. Next we note that given $\vp,\psi\in \hi$, the measure $\E_{\psi,\vp}$ is a linear combination of four measures of the form $\E_{\psi+ i^k\vp,\psi+ i^k\vp}$, $k=0,1,2,3$. If $(\psi,\psi)\in \formset{f}{\E}$ and $(\vp,\vp)\in \formset{f}{\E}$ then $f$ is integrable with respect to each of the four measures, since $\formdom{f}{\E}$ is a linear subspace. Hence, $f$ is also integrable with respect to $\E_{\psi,\vp}$, that is, $(\psi,\vp)\in \formset{f}{\E}$. Thus, $\formdom{f}{\E}\times\formdom{f}{\E}\subseteq\formset{f}{\E}$. On the other hand, if $\cal D\subseteq \hi$ is any subspace with $\cal D\times\cal D\subseteq\formset{f}{\E}$, then $(\vp,\vp)\in \formset{f}{\E}$ for all $\vp\in \cal D$, so that $\cal D\subseteq \formdom{f}{\E}$. Thus, $\formdom{f}{\E}$ is the largest of such subspaces $\cal D$.
\end{proof}

Assuming that $\formdomna$ is dense, we denote
$$\mweaksymop{f}{\E}:=\weaksymop{f}{\E}{\formdomna},$$
and call this \emph{the largest symmetric weak operator integral} determined by $f$ and $\E$. All other symmetric operator integrals are restrictions of this one. In particular, if $\E$ is a POVM, Proposition \ref{strgweakprop} gives
$$
\strgop{f}{\E}\subseteq \mweaksymop{f}{\E}.
$$
Note that this inclusion holds even in the case where $\strgop{f}{\E}$ is not dense (which can easily happen even if $\formdomna$ is dense), because if $\int f d\E_{\psi,\vp}$ exists for all $\psi$ then $\int |f| d\E_{\vp,\vp}<\infty$.

The following result deals with the case of spectral measures.
\begin{proposition}\label{spectral}
Suppose that $\E$ is projection valued. Then
$$\sqdop{f}{\E}=\strgop{f}{\E}=\mweaksymop{f}{\E}.$$
\end{proposition}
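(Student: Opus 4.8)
The plan is to treat the two equalities separately, since the first is essentially already recorded in the text while the second carries the real content. For $\sqdop{f}{\E}=\strgop{f}{\E}$ I would invoke the fact, recalled just before Lemma \ref{formineq} and resting on Remark \ref{rem:operortho}(b) together with Proposition \ref{prop:orthoint}, that for a projection valued $\E$ each vector measure $\E_\varphi$ is orthogonally scattered with associated measure $\lambda_{\E_\varphi}=\E_{\varphi,\varphi}$. Consequently $\varphi\in\strgdom{f}{\E}=D(f,\E)$ if and only if $|f|^2$ is $\E_{\varphi,\varphi}$-integrable, i.e. $\strgdom{f}{\E}=\sqd{f}{\E}$. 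As both $\sqdop{f}{\E}$ and $\strgop{f}{\E}$ are determined on this common domain by $\langle\psi|\,\cdot\,\varphi\rangle=\int f\,d\E_{\psi,\varphi}$ for \emph{every} $\psi\in\hi$, they coincide.

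Before addressing the second equality I would check that $\mweaksymop{f}{\E}$ is defined, i.e. that $\formdomna$ is dense: Cauchy--Schwarz against the finite measure $\E_{\varphi,\varphi}$ (whose total mass is $\E_{\varphi,\varphi}(\Omega)\le\no{\varphi}^2$) gives $\sqd{f}{\E}\subseteq\formdomna$, and $\sqd{f}{\E}$ is dense since the cut-offs $\E(X_n)\varphi$ lie in $\sqd{f}{\E}$ and converge to $\varphi$, where $X_n:=\{x\mid|f(x)|\le n\}$. The inclusion $\strgop{f}{\E}\subseteq\mweaksymop{f}{\E}$ is already established (Proposition \ref{strgweakprop} and the discussion after Proposition \ref{formdomdef}), so everything reduces to the reverse domain inclusion $\weaksymD{f}{\E}{\formdomna}\subseteq\sqd{f}{\E}$.

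This is the heart of the argument. Given $\varphi\in\weaksymD{f}{\E}{\formdomna}$, continuity of $\psi\mapsto\int f\,d\E_{\psi,\varphi}$ on $\formdomna$ (Proposition \ref{weaksymdef}) yields a constant $C$ with $|\int f\,d\E_{\psi,\varphi}|\le C\no{\psi}$ for all $\psi\in\formdomna$. I would feed into this the truncated test vectors $\psi_n:=\big(\int f\CHI{X_n}\,d\E\big)\varphi$, which are genuine vectors of $\hi$ because $f\CHI{X_n}$ is bounded. Using multiplicativity of the PVM, the adjoint rule $(\int h\,d\E)^*=\int\bar h\,d\E$ for bounded $h$, and orthogonal scattering (Proposition \ref{prop:orthoint}), one computes that $\E_{\psi_n,\psi_n}$ and $\E_{\psi_n,\varphi}$ have densities $|f|^2\CHI{X_n}$ and $\bar f\CHI{X_n}$ with respect to $\E_{\varphi,\varphi}$. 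Hence $\psi_n\in\formdomna$, since $\int|f|\,d\E_{\psi_n,\psi_n}=\int_{X_n}|f|^3\,d\E_{\varphi,\varphi}\le n^3\no{\varphi}^2$, while $\int f\,d\E_{\psi_n,\varphi}=\int_{X_n}|f|^2\,d\E_{\varphi,\varphi}=\no{\psi_n}^2$. The continuity estimate then reads $\int_{X_n}|f|^2\,d\E_{\varphi,\varphi}\le C\big(\int_{X_n}|f|^2\,d\E_{\varphi,\varphi}\big)^{1/2}$, so these truncated integrals are bounded by $C^2$ uniformly in $n$; monotone convergence along $X_n\uparrow\Omega$ gives $\int|f|^2\,d\E_{\varphi,\varphi}\le C^2<\infty$, i.e. $\varphi\in\sqd{f}{\E}$, as required.

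The hard part, and the only place where the multiplicative (spectral) structure is genuinely used, is this last step: one must choose the test vectors $\psi_n$ so that the continuity estimate refers back to precisely the quantity $\int_{X_n}|f|^2\,d\E_{\varphi,\varphi}$ one wishes to control, and then verify both that $\psi_n\in\formdomna$ (so the estimate may legitimately be applied to it) and the two density computations, which rely on $\E(Y)\int h\,d\E=\int\CHI{Y}h\,d\E$ and the adjoint formula for bounded functional calculus. Everything else is bookkeeping with the inclusions already available in the preceding sections.
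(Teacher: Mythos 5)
Your argument is correct, and for the substantive inclusion $\dom(\mweaksymop{f}{\E})\subseteq\sqd{f}{\E}$ it takes a genuinely different route from the paper. The paper factorizes $f=g\cdot(gh)$ with $g=\sqrt{|f|}$ and $h=f/|f|$, identifies $\formdomna=\dom(\strgop{g}{\E})=\dom(\strgop{gh}{\E})$, proves the identity $\int f\,d\E_{\psi,\vp}=\langle\strgop{g}{\E}\psi|\strgop{gh}{\E}\vp\rangle$ on $\formdomna\times\formdomna$ by a truncation-and-limit argument, and then reads off from continuity that $\strgop{gh}{\E}\vp\in\dom(\strgop{g}{\E}^*)$, invoking the composition rule $\strgop{g}{\E}^*\strgop{gh}{\E}\subseteq\strgop{g^2h}{\E}=\strgop{f}{\E}$ of the unbounded spectral calculus. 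You instead put the truncation into the \emph{test vectors}: from the bound $|\int f\,d\E_{\psi,\vp}|\le C\no{\psi}$ on $\formdomna$ you test against $\psi_n=\bigl(\int f\CHI{X_n}\,d\E\bigr)\vp$, use only the bounded functional calculus (multiplicativity and the adjoint formula) to compute the densities of $\E_{\psi_n,\vp}$ and $\E_{\psi_n,\psi_n}$ with respect to $\E_{\vp,\vp}$, obtain the self-improving estimate $\int_{X_n}|f|^2\,d\E_{\vp,\vp}\le C\bigl(\int_{X_n}|f|^2\,d\E_{\vp,\vp}\bigr)^{1/2}$, and conclude by monotone convergence. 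The computations you flag as the crux ($\psi_n\in\formdomna$ via $\int_{X_n}|f|^3\,d\E_{\vp,\vp}\le n^3\no{\vp}^2$, and $\int f\,d\E_{\psi_n,\vp}=\no{\psi_n}^2$) all check out. Your version buys a more elementary, self-contained proof that never leaves the bounded calculus and makes the uniform bound $\int|f|^2\,d\E_{\vp,\vp}\le C^2$ explicit and quantitative; the paper's version buys a structural identity (the factorization of $\strgop{f}{\E}$ through an adjoint pair) that it reuses conceptually in Proposition \ref{propotatsioone10} and in the Naimark-dilation picture. Your preliminary observations — the identification $\strgdom{f}{\E}=\sqd{f}{\E}$ via orthogonal scattering, and the density of $\formdomna$ via the cut-offs $\E(X_n)\vp$ — agree with what the paper takes for granted as ``the usual spectral integral.''
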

\begin{proof}
Since $\sqdop{f}{\E}=\strgop{f}{\E}$ is densely defined (the usual spectral integral), the weak operator integral $\mweaksymop{f}{\E}$ exists, and is an extension of $\strgop{f}{\E}$. Hence, we only need to show that $\dom(\mweaksymop{f}{\E})\subseteq \sqd{f}{\E}$. Define $g:\Omega\to \C$ by $g = \sqrt{|f|}$, and $h:\Omega \to \C$ by setting $h( x) = f( x)/(|f( x)|)$ if $f( x)\neq 0$, and $h( x)=0$ otherwise. Then $h$ and $g$ are measurable, $h$ is bounded, $g\geq 0$, and $f=g^2h$. Now
\begin{equation}\label{compos}
\strgop{g}{\E}^*\strgop{gh}{\E}\subseteq \strgop{g^2h}{\E} = \strgop{f}{\E},
\end{equation}
by the usual rules of spectral calculus of unbounded functions. Now
$$
\formdomna=\{ \vp\in \hi \mid \int |f| \, d\E_{\vp,\vp} <\infty \} = \dom(\strgop{g}{\E}) = \dom(\strgop{gh}{\E}).
$$
According to what has been concluded earlier by using polarization, $f$ is $\E_{\psi,\vp}$-integrable whenever
\emph{both} $\psi$ and $\vp$ belong to $\formdomna$.
Since $\E$ is a spectral measure, we have
$$
\int f d\E_{\psi,\vp} = \int g(gh)\, d\E_{\psi,\vp}=\langle \strgop{g}{\E}\psi |\strgop{gh}{\E}\vp\rangle, \ \vp,\psi\in \formdomna.
$$
Indeed, if $g$ is bounded, then this follows from the multiplicativity of the spectral measure, and in the general case, we approximate $g$ with the sequence $(g_n)$, where $g_n( x) = g( x)$ if $g( x) \leq n$, and $g_n( x) =0$ otherwise, and conclude that on the one hand, $\strgop{g_nh}{\E}\vp\rightarrow \strgop{gh}{\E}$, $\strgop{g_n}{\E}\vp\rightarrow \strgop{g}{\E}$ strongly,
and on the other hand, $\int g_n^2h\, d\E_{\psi,\vp}\rightarrow \int g^2h \, d\E_{\psi,\vp}$ by dominated convergence (since $|f|=g^2$ is $\E_{\psi,\vp}$-integrable).

Now if $\vp\in \dom(\mweaksymop{f}{\E})$ then by definition, $\psi\mapsto \int f\, d\E_{\psi,\vp}$ is continuous in $\formdomna=\dom(\strgop{g}{\E})$. By the formula obtained, this implies that $\strgop{gh}{\E}\vp$ belongs to
$\dom(\strgop{g}{\E}^*)$, i.e.\ $\vp\in \dom(\strgop{g}{\E}^*\strgop{gh}{\E}$, so $\vp\in \dom(\strgop{f}{\E})$. The proof is complete.
\end{proof}

\section{Sesquilinear form valued integral}

\subsection{The sesquilinear form valued integral of a sesquilinear form valued measure and a measurable function}

Since $\E$ is a sesquilinear form valued measure, it is natural to consider the \emph{sesquilinear form valued integral} of a measurable function with respect to $\E$. In this section, we first define this integral, and then consider its connection to weak operator integrals.

We start by defining a function
\begin{align*}
\form{f}{\E}:\,\formset{f}{\E}\to \C, \qquad (\psi,\varphi)\mapsto\form{f}{\E}(\psi,\varphi)=\int f d\E_{\psi,\varphi}.
\end{align*}
This function satisfies e.g.
$(\alpha \psi_1+\beta \psi_2,\varphi)\in \formset{f}{\E}$ and $\form{f}{\E}(\alpha \psi_1+\beta \psi_2,\varphi)=\overline{\alpha }\form{f}{\E}(\psi_1,\varphi)+\overline{\beta }\form{f}{\E}(\psi_2,\varphi)$,
for any $(\psi_1,\varphi),(\psi_2,\varphi)\in \formset{f}{\E}$.
In addition, $(\psi,\vp)\in \formset{f}{\E}$ if and only if $(\vp,\psi)\in \formset{f}{\E}$, and
\begin{equation}\label{symmetry}
\overline{\form{f}{\E}(\psi,\vp)} = \form{\overline f}{\E}(\vp,\psi), \qquad (\psi,\vp)\in \formset{f}{\E}.
\end{equation}
In order to consider $\form{f}{\E}$ as a sesquilinear form, we have to restrict its domain of definition to a set of the form $\cal D\times \cal D\subseteq \formset{f}{\E}$, where $\cal D\subseteq \vi$ is a subspace. (Clearly, any such restriction is sesquilinear.) According to Proposition \ref{formdomdef}, there is a canonical choice for $\cal D$, namely the largest one $\formdomna$. We denote the restriction of $\formna$ to $\formdomna$ by the same symbol. We say that
$$
\formna:\formdomna\times\formdomna\to\C
$$
is \emph{the form integral of $f$ with respect to $\E$}. The subspace $\formdomna$ is the \emph{form domain}.

It follows from \eqref{symmetry} that $\formna$ is symmetric if $f$ is real valued. It is clearly positive if $f$ is a positive function.

\begin{remark} The form domain should not be confused with the square integrability domain, which is the form domain of the form integral of $|f|^2$ with respect to $\E$. In the case of $f(x)=x$ on $\R$, the latter is called \emph{variance form;} see Introduction.
\end{remark}

In order to consider the connection between the (unique) form integral of $f$ with respect to $\E$, and the various weak operator integrals, we need some preliminaries on the standard extension theory of quadratic forms.

\subsection{Preliminaries on quadratic forms}

We start with some basic preliminaries on the theory of quadratic forms (see e.g.\ \cite{RS1,Kato}). \emph{A quadratic form} is a sesquilinear form $q:\cal D\times \cal D\to \C$, where $\cal D\subseteq \hi$ is a {\it dense} subspace, called the \emph{form domain}. If $q(\psi,\vp)=\overline{q(\vp,\psi)}$, for all $\psi,\vp\in \cal D$, then $q$ is called \emph{symmetric}, and if $q(\vp,\vp)\geq 0$ for all $\vp\in \cal D$, it is called \emph{positive}.

The \emph{adjoint form} $q^*$ of $q$ is defined on the same domain $\cal D$, via
$$q^*(\vp,\psi)=\overline{q(\psi,\vp)}, \qquad \vp,\,\psi\in \cal D.$$ Inclusion $q'\subseteq q$ between two quadratic forms is defined via the corresponding inclusion of the form domains. A linear combination of two quadratic forms is defined in the obvious way, with the domain being the intersection of the form domains. In particular, the \emph{real} and \emph{imaginary parts} of a quadratic form $q$ are defined by
\begin{align*}
\Re q&:=\frac 12 (q+q^*), & \Im q&:=\frac {1}{2i} (q-q^*).
\end{align*}
A positive quadratic form $q:\cal D\times \cal D\to \C$ is said to be \emph{closed} if $\vp_n\in \cal D$, $\vp_n\rightarrow \vp\in \hi$, and $$\lim_{n,m\rightarrow \infty} q(\vp_n-\vp_m,\vp_n-\vp_m)=0$$
imply $\vp\in \cal D$ and
$$
\lim_{n\rightarrow\infty} q(\vp_n-\vp,\vp_n-\vp)=0.
$$
It follows that $q$ is closed if and only if $\Re q$ is closed (see \cite[p.\ 313]{Kato}.

There is a canonical way of associating a positive selfadjoint operator to a positive closed quadratic form. It is given by the following theorem (see \cite{Kato,EBD1}).
\begin{theorem}\label{Katorep} Let $q$ be a closed symmetric positive quadratic form with dense form domain $\cal D$. Then there exists a positive selfadjoint operator $T$ such that $\dom(\sqrt{T})=\cal D$, and
$$
q(\psi,\vp)=\langle \sqrt{T}\psi|\sqrt{T}\vp\rangle, \quad \text{ for all }\psi,\vp\in \cal D.
$$
\end{theorem}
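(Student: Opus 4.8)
The plan is to reduce everything to an auxiliary Hilbert space structure on the form domain $\cal D$, and then read off the square root of the desired operator from the Riesz representation in that space. Concretely, I would equip $\cal D$ with the sesquilinear form
$$\langle\psi,\vp\rangle_1:=q(\psi,\vp)+\langle\psi|\vp\rangle,\qquad \psi,\vp\in\cal D.$$
Since $q$ is positive and symmetric, this is a genuine inner product, with associated norm $\no{\vp}_1^2=q(\vp,\vp)+\no{\vp}^2\geq\no{\vp}^2$. The decisive point is that the closedness hypothesis on $q$ is precisely what makes $(\cal D,\langle\cdot,\cdot\rangle_1)$ complete: a $\no{\cdot}_1$-Cauchy sequence is Cauchy both in $\hi$ and in the $q$-seminorm, hence converges in $\hi$ to some $\vp$, and closedness forces $\vp\in\cal D$ together with $q$-convergence, i.e.\ $\no{\cdot}_1$-convergence. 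Denote this Hilbert space by $\cal D_1$; the inclusion $\cal D_1\hookrightarrow\hi$ is a contraction with dense range.

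Next I would apply the Riesz theorem inside $\cal D_1$. For each $\eta\in\hi$ the functional $\vp\mapsto\langle\eta|\vp\rangle$ is bounded on $\cal D_1$ (bound $\no{\eta}$), so there is $R\eta\in\cal D_1$ with $\langle\eta|\vp\rangle=\langle R\eta,\vp\rangle_1$ for all $\vp\in\cal D$. Viewing $R$ as a map $\hi\to\hi$, one checks by the standard computations that $R$ is a positive selfadjoint contraction with $0\leq R\leq I$, and injective with dense range (if $R\eta=0$ then $\langle\eta|\vp\rangle=0$ for all $\vp$ in the dense set $\cal D$, so $\eta=0$). Hence $T:=R^{-1}-I$ is a well-defined positive selfadjoint operator with $\dom(T)=\ran(R)$. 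On $\dom(T)$ one obtains the representation $q(\vp,\chi)=\langle T\vp|\chi\rangle$ for all $\chi\in\cal D$, by writing $\vp=R\zeta$ and using the defining identity for $R$; in particular $q(\vp,\vp)=\no{\sqrt{T}\vp}^2$ and therefore $\no{\vp}_1=\no{\sqrt{I+T}\vp}$ for $\vp\in\dom(T)$.

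The crux is to upgrade this from $\dom(T)$ to all of $\cal D$, i.e.\ to prove that $\dom(\sqrt{T})=\cal D$ exactly. I would argue that $\dom(T)=\ran(R)$ is dense in $\cal D_1$ (if $\vp\in\cal D_1$ is $\langle\cdot,\cdot\rangle_1$-orthogonal to every $R\eta$, then $\langle\eta|\vp\rangle=0$ for all $\eta$, so $\vp=0$), and that $\dom(T)=\dom(I+T)$ is a core for $\sqrt{I+T}$ by the usual spectral-calculus fact that $\dom(S)$ is a core for $S^{1/2}$ when $S\geq 0$. Thus $(\cal D,\no{\cdot}_1)$ and $(\dom(\sqrt{I+T}),\no{\sqrt{I+T}\,\cdot})$ are both completions of the single normed space $(\dom(T),\no{\cdot}_1)$, the two norms being literally equal there; since each norm dominates $\no{\cdot}$, the identity on $\dom(T)$ extends continuously into $\hi$ in both cases, so the two completions coincide as subspaces of $\hi$. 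This gives $\cal D=\dom(\sqrt{I+T})=\dom(\sqrt{T})$ (the last equality because $\sqrt{I+T}$ and $\sqrt{T}$ have the same domain, their squares differing by the bounded operator $I$), together with $q(\vp,\vp)=\no{\sqrt{T}\vp}^2$ on all of $\cal D$; polarization then yields $q(\psi,\vp)=\langle\sqrt{T}\psi|\sqrt{T}\vp\rangle$.

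I expect the main obstacle to be exactly this last identification of domains: one must simultaneously use that $\dom(T)$ is dense in the closed form space $\cal D_1$ (which is where closedness is genuinely consumed) and that it is a core for $\sqrt{I+T}$, and then check that the two abstract completions are realized as the \emph{same} concrete subspace of $\hi$. The earlier steps (building $\cal D_1$, the Riesz/contraction argument, and the representation on $\dom(T)$) are routine by comparison.
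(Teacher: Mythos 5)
Your proof is correct. The paper does not actually prove Theorem \ref{Katorep} --- it defers to Kato and Davies --- and your argument is precisely the classical representation-theorem proof found in those references: make $\cal D$ into a Hilbert space via $\no{\cdot}_1^2=q(\cdot,\cdot)+\no{\cdot}^2$ (completeness being exactly the closedness of $q$), produce the contraction $R$ by Riesz representation, set $T=R^{-1}-I$, and then identify $\cal D$ with $\dom(\sqrt{I+T})=\dom(\sqrt T)$ by noting that $\dom(T)=\ran(R)$ is simultaneously $\no{\cdot}_1$-dense in $\cal D$ and a core for $\sqrt{I+T}$, with the two norms equal there. The one delicate step --- that the two completions of $\dom(T)$ are the \emph{same} subspace of $\hi$ --- is handled correctly, since both inclusions into $\hi$ are continuous and injective and agree on the common dense subspace.
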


We say that $T$ given by the above theorem is the \emph{operator associated to the quadratic form} $q$. 
We will make use of the following simple corollary; it also shows that $T$ is uniquely determined, hence the definite article.

\begin{proposition}\label{formop} Let $q$ be a closed symmetric positive quadratic form with dense form domain $\cal D\subseteq \hi$ and $T$ a positive positive selfadjoint operator associated to it as in Theorem \ref{Katorep}. Then
\begin{align}\label{formopdef}
\dom(T)&=\{\vp\in \cal D\mid \cal D\ni \psi\mapsto q(\psi,\vp)\in \C \; \text{\rm is continuous }\},\nonumber\\
q(\psi,\vp)&=\langle \psi|T\vp\rangle, \quad \text{ for all }\psi\in \cal D, \vp\in \dom(T).
\end{align}
If there is a Hilbert space $\cal K$, and an operator $A:\cal D\to \cal K$, such that
$$
q(\psi,\vp)=\langle A\psi|A\vp\rangle, \quad \text{ for all }\psi,\vp\in \cal D,
$$
then $A^*A=T$.
\end{proposition}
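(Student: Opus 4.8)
The plan is to read everything off the factorization $q(\psi,\vp)=\langle \sqrt{T}\psi|\sqrt{T}\vp\rangle$ furnished by Theorem \ref{Katorep}, exploiting the selfadjointness of $\sqrt{T}$ together with $T=(\sqrt{T})^2$, so that $\dom(T)=\{\vp\in\dom(\sqrt T)\mid \sqrt T\vp\in\dom(\sqrt T)\}\subseteq\dom(\sqrt T)=\cal D$. For the easy inclusion in \eqref{formopdef}, I would take $\vp\in\dom(T)$; then $\vp\in\cal D$ and $\sqrt T\vp\in\dom(\sqrt T)$, so for every $\psi\in\cal D=\dom(\sqrt T)$ selfadjointness gives $q(\psi,\vp)=\langle \sqrt T\psi|\sqrt T\vp\rangle=\langle\psi|T\vp\rangle$. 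This proves the action formula and shows at once that $\psi\mapsto q(\psi,\vp)$ is continuous (bounded by $\|T\vp\|\,\|\psi\|$), so $\dom(T)$ is contained in the right-hand side.

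For the reverse inclusion I would start from a $\vp\in\cal D$ for which $\psi\mapsto q(\psi,\vp)$ is continuous on $\cal D$. Density of $\cal D$ and the Frechet--Riesz theorem then produce a unique $\eta\in\hi$ with $q(\psi,\vp)=\langle\psi|\eta\rangle$ for all $\psi\in\cal D$. Feeding this into the factorization gives $\langle \sqrt T\psi|\sqrt T\vp\rangle=\langle\psi|\eta\rangle$ for every $\psi\in\dom(\sqrt T)$, which is precisely the condition that $\sqrt T\vp\in\dom((\sqrt T)^*)=\dom(\sqrt T)$ with $\sqrt T(\sqrt T\vp)=\eta$. Hence $\vp\in\dom((\sqrt T)^2)=\dom(T)$ and $T\vp=\eta$, finishing \eqref{formopdef}.

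For the final assertion, suppose $A:\cal D\to\cal K$ satisfies $q(\psi,\vp)=\langle A\psi|A\vp\rangle$ on $\cal D\times\cal D$. Using the action formula already proved, for $\vp\in\dom(T)$ and any $\psi\in\cal D=\dom(A)$ I get $\langle A\psi|A\vp\rangle=q(\psi,\vp)=\langle\psi|T\vp\rangle$; by the definition of the adjoint this says $A\vp\in\dom(A^*)$ and $A^*A\vp=T\vp$, so $T\subseteq A^*A$. Consequently $\dom(A^*A)\supseteq\dom(T)$ is dense, and $A^*A$ is symmetric since $\langle\psi|A^*A\vp\rangle=\langle A\psi|A\vp\rangle=\langle A^*A\psi|\vp\rangle$ for $\psi,\vp\in\dom(A^*A)$. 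As a selfadjoint operator is maximal symmetric, the symmetric extension $A^*A$ of $T$ must equal $T$, giving $A^*A=T$.

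I expect the only real subtlety to be the bookkeeping with $\sqrt T$ in the reverse inclusion: recognizing $\langle\sqrt T\psi|\sqrt T\vp\rangle=\langle\psi|\eta\rangle$ as the defining relation for $\sqrt T\vp\in\dom((\sqrt T)^*)$, and using $(\sqrt T)^*=\sqrt T$ and $\dom(T)=\dom((\sqrt T)^2)$. For the last part the point to watch is that $A^*A$ is densely defined (which is guaranteed by $T\subseteq A^*A$) before invoking maximality; this route has the advantage of not requiring $A$ to be closed or closable.
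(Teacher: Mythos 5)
Your proof is correct, and for \eqref{formopdef} it is essentially the paper's argument: both of you read the domain off the defining property of the adjoint of $\sqrt{T}$, using the factorization $q(\psi,\vp)=\langle\sqrt{T}\psi|\sqrt{T}\vp\rangle$ together with $T=(\sqrt{T})^*\sqrt{T}$; you merely write out the two inclusions that the paper compresses into a single chain of set equalities. The genuine divergence is in the final claim $A^*A=T$. The paper notes that for \emph{any} $A$ with $q(\psi,\vp)=\langle A\psi|A\vp\rangle$, the set $\dom(A^*A)=\{\vp\in\cal D\mid A\vp\in\dom(A^*)\}$ is characterized by exactly the same continuity condition as in \eqref{formopdef} --- a condition depending only on $q$ --- so $\dom(A^*A)=\dom(T)$ on the nose, and equality of the operators then follows from density of $\cal D$. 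You instead establish only the inclusion $T\subseteq A^*A$ and close by observing that $A^*A$ is a symmetric, densely defined extension of the selfadjoint operator $T$, hence equals $T$ because selfadjoint operators are maximally symmetric. Both closings are sound: the paper's is slightly more economical, since the domain identity comes for free from the computation already needed for \eqref{formopdef}, whereas yours trades that for a standard maximality fact and avoids verifying the inclusion $\dom(A^*A)\subseteq\dom(T)$ directly. One small caveat: the advantage you claim at the end (not needing $A$ to be closed or closable) is equally enjoyed by the paper's route, which likewise uses only the raw definition of $A^*$; the closedness of $A$ is recorded in the paper only as a remark after the proof, not as a hypothesis.
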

\begin{proof} If $A$ is as in the lemma, we have, by the definition of the adjoint, that
\begin{align*}
\dom(A^*A) &= \{\vp\in \cal D\mid A\vp\in \dom(A^*)\}\\
&=\{\vp\in \cal D\mid \cal D\ni \psi\mapsto \langle A\psi|A\vp\rangle\in \C \text{ is continuous }\}\\
&=\{\vp\in \cal D\mid \cal D\ni \psi\mapsto q(\psi,\vp)\in \C \text{ is continuous }\}.
\end{align*}
In particular, this holds for $\cal K=\hi$ and $A=\sqrt{T}$, which gives \eqref{formopdef}, because $T=(\sqrt{T})^*\sqrt{T}$. It follows that $\dom(A^*A)=\dom(T)$, and if $\vp\in \dom(T)$, $\psi\in \cal D$, we have
$$\langle \psi|A^*A\vp\rangle = \langle A\psi|A\vp\rangle =q(\psi,\vp)=\langle \psi|T\vp\rangle.$$
As $\cal D$ is dense, this implies that $A^*A=T$.
\end{proof}
\begin{remark} Note that it is nontrivial that the domain of $A^*A$ in the above proposition is actually dense. This fact follows from the above theorem. Note also that the operator $A$ is automatically closed, because the form $q$ was assumed to be closed. A special case of this result is the well-known theorem of von Neumann (see e.g.\ \cite[p.\ 180]{RS2}), which says that $A^*A$ is selfadjoint if $A$ is a closed densely defined operator.
\end{remark}

\subsection{Connection between form integral and weak operator integral}

In the case where $\formdomna$ is dense, the sesquilinear form $\formna$ is a quadratic form. In general, the adjoint form is given by
$$
\formna^*(\psi,\vp)= \int \overline{f} d\E_{\psi,\vp}, \, \psi,\vp\in \formdomna.
$$
Moreover,
\begin{align*}
\Re(\formna)&\subseteq \form{\Re(f)}{\E},& \Im(\formna)&\subseteq \form{\Im(f)}{\E},
\end{align*}
where $\Re(f)$ and $\Im(f)$ are the real and imaginary parts of the function $f$, respectively. We can further decompose these into positive and negative parts, so that
\begin{align*}
f&=f_1-f_2+i(f_3-f_4),\\
&f_i\geq 0, & |\Re(f)|&=f_1+f_2, & |\Im(f)|&=f_3+f_4.
\end{align*}
Then clearly $\formdom{f}{\E}=\cap_{i=1}^4\formdom{f_i}{\E}$, so the form integral decomposes naturally as the linear combination of the corresponding positive forms:
\begin{align*}
\form{f}{\E}&=\form{f_1}{\E}-\form{f_2}{\E}+i\form{f_3}{\E}-i\form{f_4}{\E}.
\end{align*}
Unfortunately, the situation is not so simple in case of the weak operator integrals. However, the following result holds:
\begin{proposition} Suppose that $\formdomna$ is dense. Then 
\begin{align*}
\mweaksymop{f}{\E}\supseteq \mweaksymop{f_1}{\E}-\mweaksymop{f_2}{\E}+i\mweaksymop{f_3}{\E}-i\mweaksymop{f_4}{\E}\in \weakops{f}{\E}{\Phi_{\mathcal D_s}},
\end{align*}
where $\mathcal D_s=\formdomna$, and the inclusion can be interpreted as the ordering relation in the class $\weakops{f}{\E}{\Phi_{\mathcal D_s}}$ of weak operator integrals.
\end{proposition}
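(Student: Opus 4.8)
The plan is to set $T := \mweaksymop{f_1}{\E} - \mweaksymop{f_2}{\E} + i\mweaksymop{f_3}{\E} - i\mweaksymop{f_4}{\E}$ (writing $c_1 = 1$, $c_2 = -1$, $c_3 = i$, $c_4 = -i$, so that $f = \sum_{j=1}^4 c_j f_j$) and to establish the two assertions in turn: that $T \in \weakops{f}{\E}{\Phi_{\mathcal D_s}}$ with $\mathcal D_s = \formdomna$, and that $T \subseteq \mweaksymop{f}{\E}$. First I would note that each summand is defined: since $\formdomna = \cap_{j} \formdom{f_j}{\E}$ is dense by hypothesis, every $\formdom{f_j}{\E} \supseteq \formdomna$ is dense, so $\mweaksymop{f_j}{\E} = \weaksymop{f_j}{\E}{\formdom{f_j}{\E}}$ exists. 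The identity that makes everything fit together is $\dom(T) = \cap_j \mweaksymD{f_j}{\E} \subseteq \cap_j \formdom{f_j}{\E} = \formdomna$: although the summands are built with the possibly larger separating subspaces $\formdom{f_j}{\E}$, any vector in $\dom(T)$ already lies in the smaller common form domain.

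For the membership in $\weakops{f}{\E}{\Phi_{\mathcal D_s}}$, I would fix $\vp \in \dom(T)$ and test against $\psi \in \formdomna$. As $\psi \in \formdomna \subseteq \formdom{f_j}{\E}$, the defining relation of each $\mweaksymop{f_j}{\E}$ gives $\langle \psi | \mweaksymop{f_j}{\E}\vp\rangle = \int f_j\, d\E_{\psi,\vp}$, whence $\langle \psi | T\vp\rangle = \sum_j c_j \int f_j\, d\E_{\psi,\vp}$. Since $\psi$ and $\vp$ both lie in $\formdomna$, the polarization argument in the proof of Proposition \ref{formdomdef} shows each $f_j$, and hence $f = \sum_j c_j f_j$, is $\E_{\psi,\vp}$-integrable, so linearity of the scalar integral collapses the sum to $\int f\, d\E_{\psi,\vp}$. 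Thus $\langle \psi | T\vp\rangle = \int f\, d\E_{\psi,\vp}$ for all $\psi \in \formdomna$. Because $\formdomna \times \formdomna \subseteq \formsetna$ yields $\formdomna \subseteq \predomna{\vp}$ and $\formdomna$ is dense, we obtain $\vp \in \ga{f}{\E}$ and $\Phi_{\mathcal D_s}(\vp) = \formdomna$; the displayed identity is then precisely \eqref{weak}, so $T \in \weakops{f}{\E}{\Phi_{\mathcal D_s}}$.

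For the ordering $T \subseteq \mweaksymop{f}{\E}$, I would observe that for $\vp \in \dom(T) \subseteq \formdomna$ the functional $\formdomna \ni \psi \mapsto \int f\, d\E_{\psi,\vp} = \langle \psi | T\vp\rangle$ is continuous, being the restriction to $\formdomna$ of the bounded functional $\langle \,\cdot\, | T\vp\rangle$ on $\hi$. Hence $\vp \in \mweaksymD{f}{\E}$, and density of $\formdomna$ forces $T\vp = \mweaksymop{f}{\E}\vp$; this is exactly the asserted inclusion read as the ordering relation in $\weakops{f}{\E}{\Phi_{\mathcal D_s}}$.

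The only point requiring care, and the place I expect the subtlety to sit, is the bookkeeping of the separating subspaces: each $\mweaksymop{f_j}{\E}$ naturally carries the separating subspace $\formdom{f_j}{\E}$, which may strictly contain $\mathcal D_s = \formdomna$, and one must verify that passing to the common form domain is precisely what makes $f$ itself $\E_{\psi,\vp}$-integrable (so the scalar integrals add correctly) and what allows the whole linear combination to be housed in the single class $\weakops{f}{\E}{\Phi_{\mathcal D_s}}$. Everything else reduces to linearity of the integral together with the already-established descriptions of $\formdomna$, $\mweaksymD{f_j}{\E}$, and $\ga{f}{\E}$.
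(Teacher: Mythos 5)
Your proof is correct and follows essentially the same route as the paper's: the domain of the linear combination lies in $\formdomna=\cap_i\formdom{f_i}{\E}$, testing against $\psi\in\formdomna$ and using linearity of the scalar integral gives $\langle\psi|T\vp\rangle=\int f\,d\E_{\psi,\vp}$, and continuity plus density yields the inclusion in $\mweaksymop{f}{\E}$. The only (harmless) differences are that you verify membership in $\weakops{f}{\E}{\Phi_{\mathcal D_s}}$ directly from the definition rather than via Proposition \ref{weakconstr}, and you get continuity of $\psi\mapsto\int f\,d\E_{\psi,\vp}$ from boundedness of $\langle\,\cdot\,|T\vp\rangle$ instead of summing the continuous functionals $\psi\mapsto\int f_i\,d\E_{\psi,\vp}$.
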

\begin{proof} First note that since $\formdomna$ is dense, so is each $\formdom{f_i}{\E}$; hence, the weak operator integrals $\mweaksymop{f_i}{\E}$ are defined. Denote $A:=\mweaksymop{f_1}{\E}-\mweaksymop{f_2}{\E}+i\mweaksymop{f_3}{\E}-i\mweaksymop{f_4}{\E}$. By definition, $$\dom(A)=\cap_{i=1}^4\dom(\mweaksymop{f_i}{\E}),$$ so that
$\dom(A)\subseteq \formdomna$. If $\varphi\in \dom(A)$, each functional $\psi\mapsto \int f_i d\E_{\psi,\varphi}$ is continuous on $\formdom{f}{\E}=\cap_{i=1}^4\formdom{f_i}{\E}$, and coincides with $\psi\mapsto \langle \psi|\mweaksymop{f_i}{\E}\varphi\rangle$ there. This implies that $\varphi\in \dom(\mweaksymop{f}{\E})$. Hence, $A\subseteq \mweaksymop{f}{\E}$. Since $\mweaksymop{f}{\E}\in \weakops{f}{\E}{\Phi_{\mathcal D_s}}$, it follows from Proposition \ref{weakconstr} that also $A\in \weakops{f}{\E}{\Phi_{\mathcal D_s}}$. This completes the proof.
\end{proof}

We now consider the relationship between $\form{f}{\E}$ and $\mweaksymop{f}{\E}$ in the case of a positive function $f$, and a POVM $\E$.
\begin{proposition} \label{propotatsioone10}
Let $\E$ be a POVM and $f:\Omega\to \C$ a positive measurable function, such that $\formdomna$ is dense. Then the quadratic form $\formna$ is symmetric, positive and closed. The associated positive selfadjoint operator $T$ (see Theorem \ref{Katorep}), is given by
$$T=(\strgop{\sqrt{f}}{\F}V)^*\strgop{\sqrt{f}}{\F}V,$$
where $(\cal K,\F,V)$ is any Naimark dilation of $\E$. Moreover,
$$
T=\mweaksymop{f}{\E},
$$
i.e., $T$ is the largest symmetric weak operator integral determined by $f$ and $\E$. In particular, $\mweaksymop{f}{\E}$ is selfadjoint.
\end{proposition}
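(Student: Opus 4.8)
The plan is to make the form concrete via a Naimark dilation and then feed it into the quadratic-form machinery of Proposition \ref{formop}. Fix any Naimark dilation $(\cal K,\F,V)$ of $\E$, so that $\E(X)=V^*\F(X)V$ and hence $\E_{\psi,\vp}=\F_{V\psi,V\vp}$ as complex measures for all $\psi,\vp\in\hi$. Write $g:=\sqrt f$, a nonnegative real-valued measurable function, and let $\strgop{g}{\F}$ be the spectral integral of $g$ with respect to the PVM $\F$; by Remark \ref{rem:operortho}(b) and Proposition \ref{spectral} this is a selfadjoint (hence closed) operator with domain $\{\eta\in\cal K\mid \int f\,d\F_{\eta,\eta}<\infty\}$. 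Since $f\ge0$ gives $\int|f|\,d\E_{\vp,\vp}=\int f\,d\F_{V\vp,V\vp}$, the form domain of Proposition \ref{formdomdef} is exactly $\formdomna=\{\vp\in\hi\mid V\vp\in\dom(\strgop{g}{\F})\}$, which is precisely the domain of the operator $A:=\strgop{g}{\F}V$.

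First I would establish the representation $\formna(\psi,\vp)=\langle A\psi|A\vp\rangle$ for all $\psi,\vp\in\formdomna$. Transferring to $\F$ and using $f=g^2$, this reduces to the spectral identity $\int g^2\,d\F_{\eta,\xi}=\langle\strgop{g}{\F}\eta|\strgop{g}{\F}\xi\rangle$ valid for $\eta,\xi\in\dom(\strgop{g}{\F})$, which follows from the multiplicativity of the spectral measure $\F$ together with the truncation argument already used in the proof of Proposition \ref{spectral} (replace $g$ by $g_n=g\,\CHI{\{g\le n\}}$, use strong convergence of $\strgop{g_n}{\F}$ and dominated convergence, since $f=g^2$ is $\F_{\eta,\xi}$-integrable). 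With $\eta=V\psi$, $\xi=V\vp$ this yields the claimed identity. Positivity ($\formna(\vp,\vp)=\|A\vp\|^2\ge0$) and symmetry of $\formna$ are then immediate.

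Next I would verify that $A=\strgop{g}{\F}V$ is closed: if $\vp_n\to\vp$ in $\hi$ and $A\vp_n\to\chi$ in $\cal K$, then $V\vp_n\to V\vp$ because $V$ is bounded, and closedness of the selfadjoint operator $\strgop{g}{\F}$ forces $V\vp\in\dom(\strgop{g}{\F})$ with $\strgop{g}{\F}V\vp=\chi$, i.e.\ $\vp\in\dom(A)$ and $A\vp=\chi$. As $\formdomna$ is dense by hypothesis, $A$ is a closed densely defined operator, and the form norm $\psi\mapsto\|\psi\|^2+\formna(\psi,\psi)=\|\psi\|^2+\|A\psi\|^2$ is exactly the graph norm of $A$; completeness of the graph therefore makes $\formna$ closed. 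Thus $\formna$ is symmetric, positive and closed, and the von Neumann part of Proposition \ref{formop} identifies its associated positive selfadjoint operator as $T=A^*A=(\strgop{\sqrt f}{\F}V)^*\strgop{\sqrt f}{\F}V$. Independence of the chosen dilation is automatic, since $\formna$ depends only on $\E$ and $f$ and Theorem \ref{Katorep} determines $T$ uniquely from the form.

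Finally, to identify $T$ with $\mweaksymop{f}{\E}=\weaksymop{f}{\E}{\formdomna}$, I would simply compare the two descriptions. By Proposition \ref{formop}, $\dom(T)=\{\vp\in\formdomna\mid \psi\mapsto\formna(\psi,\vp)\ \text{is continuous on }\formdomna\}$ with $\langle\psi|T\vp\rangle=\formna(\psi,\vp)$; since $\formna(\psi,\vp)=\int f\,d\E_{\psi,\vp}$, this is verbatim the domain and defining relation of $\mweaksymop{f}{\E}$ in Proposition \ref{weaksymdef}, so $T=\mweaksymop{f}{\E}$ and in particular $\mweaksymop{f}{\E}$ is selfadjoint. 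I expect the only genuine work to be the spectral identity together with the domain matching $\formdomna=\dom(\strgop{g}{\F}V)$; once the representation $\formna=\langle A\,\cdot\,|A\,\cdot\,\rangle$ is in place, everything else is a direct application of the quadratic-form theory and the elementary closedness of $\strgop{g}{\F}V$.
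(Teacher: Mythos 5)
Your proposal is correct and follows essentially the same route as the paper's proof: pass to a Naimark dilation, identify $\formdomna$ with $\dom(\strgop{\sqrt f}{\F}V)$ and the form with $\langle A\,\cdot\,|A\,\cdot\,\rangle$ for $A=\strgop{\sqrt f}{\F}V$, deduce closedness from the closedness of the spectral integral, and invoke Proposition \ref{formop} to get $T=A^*A=\mweaksymop{f}{\E}$. The only cosmetic difference is that you verify the off-diagonal identity by a direct truncation argument where the paper establishes the diagonal case and polarizes.
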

\begin{proof} Clearly, $\formna(\vp,\vp)\geq 0$ for all $\vp\in \formna$. Let $(\cal K, \F, V)$ be a Naimark dilation of $\E$, so that $\E(X)= V^*\F(X)V$ and $\F$ is projection valued. Now
\begin{align}\label{eqform}
\formdomna &= \{\vp\in \hi \mid V\vp\in \strgdom{\sqrt{f}}{\F} \},\nonumber\\
\formna(\vp,\vp) &=\|\strgop{\sqrt{f}}{\F}V\vp\|^2, \, \text{ for all }\vp\in \formdomna.
\end{align}
Now if $\vp_n\in \formdomna$, such that $\vp_n\rightarrow \vp\in \hi$, and $\lim_{n,m\rightarrow \infty} \formna(\vp_n-\vp_m,\vp_n-\vp_m)=0$, it follows that $V\vp_n\rightarrow V\vp$, and $(\strgop{\sqrt{f}}{\F}V\vp_n)_n$ converges in $\cal K$. Since $\F$ is projection valued, $\strgop{\sqrt{f}}{\F}$ is a closed operator on its domain, so $V\vp\in \strgdom{\sqrt{f}}{\F}$, and $\lim_{n\to\infty} \strgop{\sqrt{f}}{\F}V\vp_n=\strgop{\sqrt{f}}{\F}V\vp$. But this implies that $\vp\in \formdomna$, and $\lim_{n\rightarrow\infty} \formna(\vp_n-\vp,\vp_n-\vp)=0$. Hence the form $\formna$ is closed. From \eqref{eqform} it now follows by polarization and Proposition \ref{formop} that $(\strgop{\sqrt{f}}{\F}V)^*\strgop{\sqrt{f}}{\F}V$ is the selfadjoint operator associated to the form $\formna$. From Proposition \ref{formop}, we immediately see that $T=\mweaksymop{f}{\E}$. This completes the proof.
\end{proof}

\begin{remark}\rm Notice that in the above proposition, $V^*\strgop{f}{\F}V\subseteq T=\mweaksymop{f}{\E},$ because
\begin{equation}\label{inclusion}
V^*\strgop{\sqrt{f}}{\F}\subseteq (\strgop{\sqrt{f}}{\F}V)^*.
\end{equation}
From \eqref{squarenaimark}, we know that
$V^*\strgop{f}{\F}V=\sqdop{f}{\E}$. Hence, in this case, the difference between the strong operator integral on the square integrability domain and the maximal symmetric weak operator integral, is in the operator inclusion \eqref{inclusion}, which can be proper because continuity of the functional $\psi\mapsto \langle \strgop{\sqrt{f}}{\F}\psi |\vp\rangle$ on $V(\hi)\cap \dom(\strgop{\sqrt{f}}{\F})$ does not necessarily imply its continuity on the full domain $\dom(\strgop{\sqrt{f}}{\F})$.
\end{remark}

\section{Application: moment operators of a POVM}\label{viimeinenluku}

Consider the \emph{operator valued moments} (or \emph{moment operators}) of a normalized POVM $\E:\cal B(\R)\to \lh$. They
are simply defined as operator integrals $\int x^k d\E$ of real functions $x\mapsto x^k$ where $k\in \N$.\footnote{For simplicity, we will use the symbol $x^k$ to denote the function $x\mapsto x^k$.}
Hence, we have three natural ways
to defined them: $\tilde{\E}[k]:=\tilde{L}(x^k,\E)$, $\E[k]:=L(x^k,\E)$, and ${\E}'[k]:={L}'(x^k,\E)$.
Recall that $\tilde{\E}[k]\subseteq \E[k]\subseteq {\E}'[k]$ and their domains are
\begin{eqnarray*}
\dom(\tilde{\E}[k]) &=& \Big\{ \vp\in \hi \,\Big| \int x^{2k}\, d\E_{\vp,\vp}(x) <\infty\Big\}, \\
\dom({\E}[k]) &=& \Big\{ \varphi\in \hi \, \Big| \int |x|^k d|\E_{\psi,\varphi}|<\infty \text{ for all }\psi\in \hi\Big\}, \\
\dom({\E'}[k])&=&\Big\{\vp \in \cal D_F(x^k,\E)\Big| \cal D_F(x^k,\E)\ni\psi\mapsto \int x^k\, d\E_{\psi,\vp}\in \C\; \text{\rm is continuous}\Big\}\\
\text{if }&& \formdom{x^k}{\E}=\Big\{\vp\in\hi\Big| \int |x|^k d\E_{\vp,\vp}<\infty\Big\}\;\text{ is dense in }\hi.
\end{eqnarray*}
By comparison, the \emph{sesquilinear form valued moments} are given by the form integral
\begin{align*}
\form{x^k}{\E}(\psi,\varphi) &=\int x^k d\E_{\psi,\varphi}, & \psi,\varphi\in \formdom{x^k}{\E}.
\end{align*}
Since $\dom(\tilde{\E}[k])= \formdom{x^{2k}}{\E}$, we can define the \emph{variance form} on this form domain as
$$
(\psi,\varphi)\ni \form{x^{2k}}{\E}(\psi,\varphi)-\langle \tilde{\E}[k]\psi|\tilde{\E}[k]\varphi\rangle.
$$
If this form is identically zero, the POVM is called \emph{variance-free} \cite{Werner}.

Let $(\cal K,\F,V)$ be a Naimark dilation of $\E$.
From Proposition \ref{spectral} one sees that
$\tilde{\F}[k]=\F[k]=\F'[k]$ for any $k\in\N$. Moreover,
$\dom(\F[k]V)=\dom(\tilde{\E}[k])$ 
and
\begin{equation}\label{tildeoperators}
\tilde{\E}[k]=V^*\F[k]V.
\end{equation}
(This was also proved in \cite{LahtiII}.)
If $k$ is even, i.e.\ $k=2j$, $j\in\N$, and
$
\formdom{x^{2j}}{\E}=\dom(\tilde{\E}[j])
$
is dense {\it (which is assumed below)}, then it follows from Proposition \ref{propotatsioone10}
that
\begin{equation}\label{pilkkuoperators}
{\E}'[2j]=(\F[j]V)^*(\F[j]V)
\end{equation}
is positive and selfadjoint. Now
$\dom({\E}'[2j])$ consists of exactly those vectors $\vp\in \dom(\tilde{\E}[j])$ for which
$\F[j]V\vp\in\dom\big((\F[j]V)^*\big)$.
Note that $\F[j]V$ is a map $\dom(\tilde{\E}[j])\to \cal K$,
so the adjoint $(\F[j]V)^*$ maps from a subspace of $\cal K$ to $\cal H$.
It is clear that
\begin{equation}\label{incluusio}
V^*\F[j]\subseteq (\F[j]V)^*
\end{equation}
because if $\vp\in \dom(\F[j])$, then $\psi\mapsto \langle \vp| \F[j]V\psi\rangle= \langle V^*\F[j]\vp|\psi\rangle$ is continuous in
$\dom(\F[j]V)$.

\begin{proposition}\label{trivialprop}
\begin{itemize}
\item[(a)] Suppose that $V^*\F[j]= (\F[j]V)^*$. Then $\tilde{\E}[2j]={\E}'[2j]$.
\item[(b)] Suppose that $\F[j]\big(\dom(\F[j])\cap V(\hi)\big)\subseteq V(\hi)$. Then
${\E}'[2j] = \tilde{\E}[j]^*\tilde{\E}[j]$.
\end{itemize}
\end{proposition}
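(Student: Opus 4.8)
The plan is to reduce both statements to the representation ${\E}'[2j]=(\F[j]V)^*(\F[j]V)$ recorded in \eqref{pilkkuoperators}, the inclusion $V^*\F[j]\subseteq(\F[j]V)^*$ of \eqref{incluusio}, the identity $\tilde{\E}[j]=V^*\F[j]V$ from \eqref{tildeoperators} (together with $\dom(\F[k]V)=\dom(\tilde{\E}[k])$), and spectral calculus for the projection valued $\F$.

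For part (a), the hypothesis $V^*\F[j]=(\F[j]V)^*$ upgrades \eqref{incluusio} to an equality, so substituting into \eqref{pilkkuoperators} yields ${\E}'[2j]=(\F[j]V)^*(\F[j]V)=V^*\F[j]\,\F[j]V$. The decisive point is the spectral-calculus fact that, for the selfadjoint operator $\F[j]$, the operator product $\F[j]\,\F[j]$, taken with its natural domain $\{\eta\in\dom(\F[j])\mid \F[j]\eta\in\dom(\F[j])\}$, equals $\F[2j]$ on the nose, since the spectral measure of $\F[j]$ is the pushforward of $\F$ and $(x^j)^2=x^{2j}$. This gives ${\E}'[2j]=V^*\F[2j]V=\tilde{\E}[2j]$ by \eqref{tildeoperators}; a brief domain check confirms that both sides have domain $\{\vp\mid V\vp\in\dom(\F[2j])\}=\dom(\F[2j]V)$, using that under (a) one has $\dom((\F[j]V)^*)=\dom(\F[j])$ and $\F[j]V\vp\in\dom(\F[j])$ iff $V\vp\in\dom(\F[2j])$.

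For part (b), I would first convert the invariance hypothesis into an operator identity. It says $\F[j]V\vp\in V(\hi)$ whenever $V\vp\in\dom(\F[j])$, that is, for all $\vp\in\dom(\tilde{\E}[j])=\dom(\F[j]V)$. As $V$ is an isometry, $V(\hi)$ is closed and $VV^*$ is the orthogonal projection onto it, so $\F[j]V\vp=VV^*\F[j]V\vp=V\tilde{\E}[j]\vp$ for every such $\vp$; hence $\F[j]V=V\tilde{\E}[j]$ as operators, domains included. Substituting into \eqref{pilkkuoperators} and using the product-adjoint rule $(V\tilde{\E}[j])^*=\tilde{\E}[j]^*V^*$ (valid since $V$ is bounded and everywhere defined) gives
\[
{\E}'[2j]=(V\tilde{\E}[j])^*(V\tilde{\E}[j])=\tilde{\E}[j]^*V^*V\tilde{\E}[j]=\tilde{\E}[j]^*\tilde{\E}[j],
\]
the last equality using $V^*V=I$; one checks that the natural domain of the composition on the left is exactly $\{\vp\in\dom(\tilde{\E}[j])\mid \tilde{\E}[j]\vp\in\dom(\tilde{\E}[j]^*)\}=\dom(\tilde{\E}[j]^*\tilde{\E}[j])$.

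I expect the only real work to lie in the domain bookkeeping: in (a), arguing that $\F[j]\,\F[j]=\F[2j]$ as an equality of operators (not merely an inclusion) via the spectral theorem, and in (b), upgrading the invariance condition to the exact operator equality $\F[j]V=V\tilde{\E}[j]$ and handling the adjoint of the product $V\tilde{\E}[j]$ correctly. Once these identities are established, both conclusions follow by the displayed substitutions.
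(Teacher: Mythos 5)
Your proof is correct and follows essentially the same route as the paper: part (a) by substituting the hypothesis into ${\E}'[2j]=(\F[j]V)^*(\F[j]V)$ and using $\F[j]\F[j]=\F[2j]$, and part (b) by upgrading the invariance hypothesis to $\F[j]V=V\tilde{\E}[j]$ and then computing the adjoint of the product. The only cosmetic difference is that the paper verifies $(V\tilde{\E}[j])^*V=\tilde{\E}[j]^*$ directly via continuity of the relevant functionals, whereas you invoke the standard rule $(BA)^*=A^*B^*$ for bounded everywhere-defined $B$ --- these amount to the same argument.
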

\begin{proof}
We have already proved (a); see \eqref{tildeoperators}, \eqref{pilkkuoperators}, and use $\F[2j]=\F[j]\F[j]$.
 To prove (b), note that the assumption implies $\F[j]V=VV^*\F[j]V=V\tilde{\E}[j]$.
Now a vector $\vp\in \cal H$ satisfies $V\vp\in \dom\big((V\tilde{\E}[j])^*\big)$ if and only if $\psi\mapsto \langle V\tilde{\E}[j]\psi |V\vp\rangle = \langle \tilde{\E}[j]\psi|\vp\rangle$ is continuous on $\dom(\tilde{\E}[j])$, which happens exactly when $\vp\in \dom(\tilde{\E}[j]^*)$. Hence, $(\F[j]V)^*V=(V\tilde{\E}[j])^*V = \tilde{\E}[j]^*$ and
${\E}'[2j] = (\F[j]V)^*(\F[j]V)= (V\tilde{\E}[j])^*(V\tilde{\E}[j]) = \big\{(V\tilde{\E}[j])^*V\big\}\tilde{\E}[j]=\tilde{\E}[j]^*\tilde{\E}[j].$
\end{proof}

\subsection{Momentum for a  bounded interval}
Consider first a free (nonrelativistic) particle of mass $m$ moving along a line which can be chosen to be $\R$ without restricting generality. We use units where $\hbar=1$.
Then the Hilbert space of the system is $L^2(\R)$ and the (sharp) position observable is $\Qo_\R:\,\cal B(\R)\to\LH$,
$$
(\Qo_\R(X)\psi)(x):=\CHI X(x)\psi(x),\qquad X\in\bo\R,\;\psi\in L^2(\R),\;x\in\R.
$$
The (sharp) momentum observable is
$\Po_\R:\,\cal B(\R)\to\LH$
$$
\Po_\R(Y):=\cal F^*\Qo_\R(Y) \cal F,\qquad Y\in\bo\R,
$$
where
$\mathcal{F}:L^2(\mathbb{R})\to L^2(\mathbb{R})$ is the
Fourier-Plancherel (unitary) operator determined by
$$
(\mathcal{F}\psi)(x) = \frac{1}{\sqrt{2\pi}} \int_\R
e^{-ixt} \psi(t)\, dt, \qquad \psi\in
L^1(\R)\cap L^2(\R),\;x\in \mathbb{R}.
$$
Since $\Qo_\R$ and $\Po_\R$ are spectral measures, there is no ambiguity in defining their moment operators, see Proposition \ref{spectral}. For example,
$\Qo_\R[1]$ and $\Po_\R[1]$ are the usual selfadjoint position and momentum operators,
$$
(\Qo_\R[1]\psi)(x)=x\psi(x),\qquad
(\Po_\R[1]\psi)(x)=-i \psi'(x)
$$
where $\psi'(x):=d\psi(x)/dx$ (and similarly $\psi''(x):=d^2\psi(x)/dx^2$).
Recall that, e.g., $\dom(\Po_\R[1])$
consists of those absolutely continuous functions $\psi\in L^2(\R)$ for which $\psi'\in L^2(\R)$.
Now the energy operator is $(2m)^{-1}\Po_\R[1]^2=(2m)^{-1}\Po_\R[2]$ whose spectrum is continuous, consisting of nonnegative numbers.

Suppose then that the particle is confined to move on a (fixed) bounded interval taken to be $\cal I=[0,\ell]$ where $\ell>0$ is the length of the interval. Note that we do not assume that the endpoints $0$ and $\ell$ can be identified so that the system is not periodic with periodic boundary conditions (indeed, in the periodic case, the position space is a circle instead of an interval).

Since the particle is strictly confined to the interval $\cal I$, the Hilbert space of the system is $L^2(\cal I)$ and the position observable is now the (restricted) spectral measure $\Qo:\,\cal B(\R)\to\LHH$,
$$
(\Qo(X)\vp)(x):=\CHI X(x)\vp(x),\qquad X\in\bo\R,\;\vp\in L^2(\cal I),\;x\in\cal I.
$$
Indeed, let $U:\,L^2(\cal I)\to L^2(\R)$ be the
isometry $(U\vp)(x) = \vp(x)$ for $x\in {\cal I}$, and $(U\vp)(x)=0$, $x\notin {\cal I}$.
Then $U^*:\,L^2(\R)\to L^2(\cal I)$ simply acts as $(U^*\psi)(x) = \psi(x)$, $x\in {\cal I}$, and
$$
\Qo(X)=U^*\Qo_\R(X)U,\qquad X\in\bo\R.
$$
Again, there is no ambiguity in calculating the moments of $\Qo$.
However, the situation is totally different for the momentum POVM $\Po:\,\bo\R\to\LHH$ which is defined similarly to $\Qo$:
$$
\Po(Y):=U^*\Po_\R(Y)U,\qquad Y\in\bo\R.
$$
Note that $(L^2(\R),\Po_\R,U)$ is a Naimark dilation of $\Po$.

The following questions now arise:
{\it What is the correct definition for the second moment operator of $\Po$?
Is the second moment of $\Po$ (times $(2m)^{-1}$) the energy operator in this case?}

The operators $\Po_\R[1]U$, $(\Po_\R[1]U)^*$, and $\tilde{\Po}[1]$ can now be explicitly determined, but a certain care has to be excercised. Namely, the domain of $\Po_\R[1]U$ consists of exactly those functions $\vp\in L^2(\cal I)$ for which $U\vp$ is absolutely continuous, with $(U\vp)'\in L^2(\R)$. Now $U\vp$ is absolutely continuous exactly when $\vp$ is absolutely continuous
in the interval $\cal I=[0,\ell]$, \emph{and vanishes at the endpoints}. (If it did not vanish, then there would be a discontinuity.) The set of absolutely continuous functions $\vp\in L^2(\cal I)$ with $\vp'\in L^2(\cal I)$
and $\vp(0)=\vp(\ell)=0$ is denoted by $\dom(P_0)$, and the corresponding version of the differential operator $-i{d}/{dx}$ by $P_0$, acting in $L^2(\cal I)$.
Hence, $\Po_\R[1]U=UP_0$. This implies $\tilde{\Po}[1] = U^*\Po_\R[1]U = U^*UP_0=P_0$, see \eqref{tildeoperators}. The operator $P_0$ is well known to be densely defined and closed (see e.g. \cite{RS1}).

Now the adjoint of $\Po_\R[1]U=UP_0$ is a map from a subspace of $L^2(\R)$ to $L^2(\cal I)$. A vector $\psi\in L^2(\R)$ belongs to its domain exactly when
$
\vp\mapsto \langle \psi | UP_0\vp\rangle = \langle U^*\psi | P_0\vp\rangle
$
is continuous in $\dom(P_0)\subseteq L^2(\cal I)$. But this happens exactly when $U^*\psi=\psi|_{[0,\ell]}\in \dom(P_0^*)$.
Now $\dom(P_0^*)$ consists of those vectors $\vp\in L^2(\cal I)$ which are absolutely continuous, with $\vp'\in L^2(\cal I)$, and no other restriction. Hence,
$$
\dom\big((\Po_\R[1]U)^*\big) = \big\{ \psi\in L^2(\R)\,\big| \, \psi|_{[0,\ell]} \text{ is absolutely continuous and } {\psi|_{[0,\ell]}}'\in L^2(\cal I) \big\}.
$$
Obviously, this contains $\dom(\Po_\R[1])$, as required by the general inclusion $U^*\Po_\R[1]\subseteq (\Po_\R[1]U)^*$, see \eqref{incluusio}.
Now it is clear that $U^*\Po_\R[1]\neq (\Po_\R[1]U)^*$, since $\psi\in \dom((\Po_\R[1]U)^*)$ does not even
have to be continuous outside $[0,\ell]$.
Instead, we have $\Po_\R[1](\dom(\Po_\R[1])\cap U(L^2(\cal I)))\subseteq U(L^2(\cal I))$, because if $\vp\in L^2(\R)$ vanishes outside $[0,\ell]$, then  
$(\Po_\R[1]\vp)(x)=0$ for $x\notin [0,\ell]$.

Hence,
we know from Proposition \ref{trivialprop} (b) that
${\Po}'[2]=\tilde{\Po}[1]^*\tilde{\Po}[1] =P_0^*P_0.$
The domain of this operator is characterized by the boundary condition $\psi(0)=\psi(\ell)=0$, and the requirements that $\psi$ be continuously differentiable and
$\psi''\in L^2(\cal I)$. Note that this operator is selfadjoint by Proposition \ref{propotatsioone10}; the operator $(2m)^{-1}{\Po}'[2]$ is the Hamiltonian operator for the particle of mass $m$ confined to move in the interval $\cal I$ (``particle in a box''). The spectrum of this operator is discrete and has the complete orthonormal system of eigenvectors $\psi_n$,
$$
\psi_n(x) = \sqrt{\frac 2 \ell} \sin (n\pi x/\ell),\qquad n\in\Z,\;0\le x\le \ell,
$$
associated with eigenvalues $\lambda_n=n^2\pi^2/(2\ell^2)$, that is,
$$
 {\Po}'[2]\psi=
\sum_{n\in\Z} \l_n \ang{ \psi_n}{\psi} \psi_n,
\qquad \psi\in \dom(\Po'[2])= \SSet{\psi\in L^2(\cal I)}{\sum_{n\in\Z} \l_n^2
|\langle \psi_n|\psi\rangle|^2<\infty}.
$$
We will show in the Appendix that $\tilde{\Po}[2]=\Po[2]=P_0^2$. As required, this a restriction of ${\Po}'[2]=P_0^*P_0$, and the difference is exactly in the additional boundary condition $\psi'(0)=\psi'(\ell) = 0$ for any $\psi\in\dom(P_0^2)\subset \dom(P_0^*P_0)$.

To conclude, the physically reasonable definition for the second moment operator of the POVM $\E$ is the symmetric weak operator integral ${\Po}'[2]$ rather than the strong operator integral $\tilde{\Po}[2]=\Po[2]$. With this choice, the POVM $\E$ satisfies analogous ``quantization rules" as the full line momentum, up to second moments.

\section*{Appendix}

The notation
$\varphi^{( k)}=d^k\varphi/dx^k$, $\varphi^{(0)}=\varphi$, will be
used.
Define, for all $n=1,2,...,$  the Sobolev-Hilbert spaces
\begin{equation*}
H^n(\mathcal{I})=\SSet{ \varphi\in C^{n-1}(\mathcal{I})}{
\varphi^{(n-1)} \text{ is absolutely continuous and } \varphi^{(n)} \in
L^2(\mathcal{I}) }
\end{equation*}
where $C^k(\mathcal{I})$ is the
space of $k$-times continuously differentiable complex functions
on $\mathcal{I}$ (and $C^0(\mathcal{I})$ stands for continuous functions).
For $n=1$ we write $H^1(\mathcal{I})=H(\mathcal{I})$.

We start with the definition for the moment operators that usually
appears in the literature, namely
$\tilde{\mathsf{\Po}}[n]$, $n\in\N$.
The following result
was briefly mentioned by Werner \cite{Werner}. We give a proof
here in order to emphasize that care has to be taken on absolute continuity. That care is needed can also be
deduced from the fact that the only difference between the
integrals that one has as a tool is their domains.

\begin{proposition}\label{squaremoments} $\tilde\Po[n] =P_0^n$, and $\Po $ is variance-free.
\end{proposition}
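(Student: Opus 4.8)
The plan is to leverage the Naimark dilation $(L^2(\R),\Po_\R,U)$ through identity \eqref{tildeoperators}, which here reads $\tilde\Po[n]=U^*\Po_\R[n]U$ with $\dom(\tilde\Po[n])=\dom(\Po_\R[n]U)$. Since $\Po_\R$ is the spectral measure of the full-line momentum, $\Po_\R[n]$ is the selfadjoint operator $(-i)^n\,d^n/dx^n$ with the Sobolev domain $H^n(\R)$. Everything then reduces to two tasks: identifying $\dom(\Po_\R[n]U)$ as a subspace of $L^2(\cal I)$ and matching it with $\dom(P_0^n)$, and checking that the two actions coincide there.

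For the domain, I would first determine when the zero-extension $U\vp$ lies in $H^n(\R)$. As a function in $H^n(\R)$ belongs to $C^{n-1}(\R)$, each derivative $(U\vp)^{(j)}$, $j=0,\dots,n-1$, is continuous across the endpoints $0$ and $\ell$; since $U\vp$ vanishes identically outside $\cal I$, these exterior one-sided limits are $0$, and continuity forces $\vp^{(j)}(0)=\vp^{(j)}(\ell)=0$ for $j=0,\dots,n-1$. Conversely, if $\vp\in H^n(\cal I)$ meets these boundary conditions, its zero-extension is readily seen to lie in $H^n(\R)$. Thus $\dom(\Po_\R[n]U)$ consists precisely of those $\vp\in H^n(\cal I)$ with $\vp^{(j)}(0)=\vp^{(j)}(\ell)=0$, $j=0,\dots,n-1$. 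I would then show $\dom(P_0^n)$ is the same set by unwinding the definition of $P_0=-i\,d/dx$ (acting on $H(\cal I)$-functions vanishing at the endpoints): the successive requirements $P_0^j\vp\in\dom(P_0)$, $j=0,\dots,n-1$, force exactly $\vp\in H^n(\cal I)$ and $\vp^{(j)}(0)=\vp^{(j)}(\ell)=0$ for each such $j$. On this common domain $U^*\Po_\R[n]U\vp=(-i)^n\vp^{(n)}=P_0^n\vp$, because $(U\vp)^{(n)}$ restricts to $\vp^{(n)}$ on $\cal I$; this yields $\tilde\Po[n]=P_0^n$.

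For the variance-free property I would check, for each $n$ and every $\vp\in\dom(\tilde\Po[n])=\formdom{x^{2n}}{\Po}$, the diagonal identity
$$
\int x^{2n}\,d\Po_{\vp,\vp}=\|\tilde\Po[n]\vp\|^2,
$$
since the variance form is sesquilinear and hence vanishes identically once it vanishes on the diagonal. Because $\Po_\R$ is projection valued and $\Po_{\vp,\vp}=(\Po_\R)_{U\vp,U\vp}$, the left-hand side equals $\|\Po_\R[n]U\vp\|^2$. The decisive point is that $\Po_\R[n]U\vp=(-i)^n(U\vp)^{(n)}$ vanishes outside $\cal I$ (as $U\vp$ does), hence lies in the range of $U$; therefore $\|U^*\Po_\R[n]U\vp\|=\|\Po_\R[n]U\vp\|$, and the right-hand side $\|\tilde\Po[n]\vp\|^2=\|U^*\Po_\R[n]U\vp\|^2$ matches, giving the vanishing of the variance form, i.e.\ $\Po$ is variance-free.

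The step I expect to be the main obstacle is the boundary analysis in the domain computation: establishing rigorously that the zero-extension $U\vp$ can belong to $H^n(\R)$ only if all derivatives up to order $n-1$ vanish at both endpoints, and that this nested family of boundary conditions is precisely the one generated by iterating $P_0$. This requires handling absolute continuity and Sobolev regularity at the endpoints with care; once the domains are matched, the equality of actions and the variance-free identity follow quickly from the fact that differentiating a zero-extension preserves support inside $\cal I$.
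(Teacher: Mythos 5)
Your proposal is correct and follows essentially the same route as the paper: both identify $\dom(\tilde\Po[n])$ via the Naimark dilation $(L^2(\R),\Po_\R,U)$ and the boundary analysis of zero-extensions in $H^n(\R)$, both use the key observation that $\ran U$ is stable under $\Po_\R[n]$ (differentiation preserves support in $\cal I$) to identify the action and to get $\|U^*\Po_\R[n]U\vp\|=\|\Po_\R[n]U\vp\|$, and both deduce the variance-free property from the resulting diagonal norm identity together with polarization.
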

\begin{proof}
According to the definition, the square integrability domain is
$$
\dom(\tilde\Po[n]):= \SSet{\varphi\in
L^2(\mathcal{I})}{ \int x^{2n}\,
d{\Po }_{\varphi,\varphi}(x)<\infty}.
$$
Since $\ang{ \varphi }{\Po (X)\varphi} =
\ang{
{U}\varphi}{\Po_\R(X){U}\varphi}$
for $\varphi\in L^2(\mathcal{I})$, it follows immediately from the
usual spectral theory that
${U}\dom(\tilde\Po[n])=
\dom\big(\Po_\R[n])\cap {U}(L^2(\mathcal{I})\big)$. Each function
$\varphi:\mathbb{R}\to \mathbb{C}$ belonging to $\dom(\Po_\R[1])$ is
absolutely continuous, so it follows
that
\begin{align*}
\dom(\tilde\Po[n])=\SSet{ \varphi \in H^n(\mathcal{I})}{ \frac{d^k}{dx^k}\varphi(0)
=\frac{d^k}{dx^k}\varphi(\ell)=0 \text{ for } k=0,1,\ldots, n-1
}=\dom\rd{P_0^n}.
\end{align*}
Then given a $\varphi\in \dom\rd{P_0^n}$,
$\Po_{\psi,\varphi}(X)=\ang{
{U}\psi}{\Po_\R(X){U}\varphi}$ for all
$\psi\in L^2(\mathcal{I})$, so by the spectral theorem,
$\<{\psi}|{\tilde\Po[n]\varphi}\> =
\ang{ {U}\psi }{ \Po_\R[n]{U}\varphi}$. The
important point now is that the range of ${U}$ is
stable under $\Po_\R[n]$, i.e.\ $\Po_\R[n]\big(\dom(\Po_\R[n])\,\cap\,
\ran\,{U}\big)\subseteq \ran\,{U}$, since
$\Po_\R[n]$ is a derivative and the functions in the range of
${U}$ vanish outside the interval $\mathcal{I}$. It
follows that $\Po_\R[n]{U}\varphi$ is orthogonal to
$\rd{\ran\,{U}}^\perp$, which implies that $\< \Psi|{{U}\tilde\Po[n]\varphi}\>
= \ang{\Psi}{\Po_\R[n]{U}\varphi}$ for \emph{any} $\Psi\in
L^2(\mathbb{R})$, and so
${U}\tilde\Po[n]\varphi=\Po_\R[n]{U}\varphi$.
Since $\Po_\R[n]$ acts as the differential operator, this clearly
implies that $\tilde\Po[n]$ does the
same. Hence, $\tilde\Po[n] =P_0^n$.
The fact that $\Po $ is variance-free
follows from the relation
${U}\tilde\Po[n]\varphi=\Po_\R[n]{U}\varphi$
(see \cite{Werner}). As the proof is very short, we give it here:
\begin{align*}
\M{\tilde\Po[1]\varphi}^2 =
\M{{U}\tilde\Po[1]\varphi}^2
= \M{\Po_\R[n]{U}\varphi}^2 = \int x^{2n}\,
d[{\Po_\R}]_{{U}\varphi,{U}\varphi}(x)
= \int
x^{2n}\, d\Po_{\varphi,\varphi}(x)
\end{align*}
for each $\varphi\in \dom(\tilde\Po[1])$.
\end{proof}

We now proceed to the other two definitions $\Po[n]$ and $\Po'[n]$. The first thing to
note is that both the strong operator integral
$\Po [n]$ and the weak one
$\Po'[n]$ are symmetric extensions
of $\tilde\Po[n]=P_0^n$.
Hence, it follows that
$\dom(\Po [n])\subseteq
\dom(\Po'[n])\subseteq
H^n(\mathcal{I})$, and these operators just act as $(-i)^n
{d^n}/{dx^n}$ on their respective domains.

We will first show that $\Po [n]=P_0^n$, for
all $n=1,2,\ldots$ (see Proposition \ref{moments} below.) The
following two lemmas are needed.
Let
$\mathcal{F}:L^2(\mathbb{R})\to L^2(\mathbb{R})$ be the
Fourier-Plancherel operator.\footnote{Here we want to
apply $\mathcal{F}$ to functions in $L^2(\mathcal{I})$. In our
notation this would be written as $\mathcal{F} {U}$;
in order to simplify the notations, we will write
 $\mathcal{F}$ instead.}
Hence,
$$
(\mathcal{F}\varphi)(x) = \frac{1}{\sqrt{2\pi}} \int_0^\ell
e^{-ixt} \varphi(t)\, dt, \; \; x\in \mathbb{R}, \; \varphi\in
L^2(\mathcal{I}).
$$
(Note that every element of $L^2(\mathcal{I})$ is integrable by
the Cauchy-Schwarz inequality.) The function
$\mathcal{F}\varphi:\mathbb{R}\to \mathbb{C}$ is continuous,
bounded, and belongs to $L^2(\mathbb{R})$.
\begin{lemma}\label{apulemma1}
\begin{itemize}
\item[(a)] For any $\varphi\in H(\mathcal{I})$, we have
$$
[\mathcal{F}\varphi'](x) =
ix[\mathcal{F}\varphi](x)+\frac{1}{\sqrt{2\pi}}[\varphi(\ell
)e^{-ix\ell}-\varphi(0)], \ \ x\in \mathbb{R}
$$
\item[(b)] For any $\varphi\in H^n(I)\cap \dom(P_0^{n-1})$, we have
$$
x^n[\mathcal{F}\varphi](x) =
(-i)^n[\mathcal{F}\varphi^{(n)}](x)+\frac{i^n}{\sqrt{2\pi}}[\varphi^{(n-1)}(\ell)e^{-i\ell
x}-\varphi^{(n-1)}(0)], \ \ x\in \mathbb{R}.
$$
\end{itemize}
\end{lemma}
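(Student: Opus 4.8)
The plan is to prove (a) by a single integration by parts and then to obtain (b) by induction on $n$, exploiting the boundary conditions built into $\dom(P_0^{n-1})$ to annihilate all intermediate boundary contributions.

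For (a) I would begin from $[\mathcal{F}\varphi'](x) = \frac{1}{\sqrt{2\pi}}\int_0^\ell e^{-ixt}\varphi'(t)\,dt$ and integrate by parts with $u=e^{-ixt}$, $v=\varphi$. Because $\varphi\in H(\mathcal{I})$ is absolutely continuous on $[0,\ell]$ with $\varphi'\in L^2(\mathcal{I})\subseteq L^1(\mathcal{I})$, this integration by parts is legitimate --- and this absolute-continuity hypothesis is precisely the point the section emphasizes. The boundary term $[e^{-ixt}\varphi(t)]_0^\ell$ supplies $\frac{1}{\sqrt{2\pi}}[\varphi(\ell)e^{-ix\ell}-\varphi(0)]$, while $\frac{d}{dt}e^{-ixt}=-ix\,e^{-ixt}$ turns the remaining integral into $+ix[\mathcal{F}\varphi](x)$, which is exactly (a).

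For (b) I would induct on $n$, the case $n=1$ being (a) multiplied through by $-i$ (using $-i\cdot i=1$ and $i^{-1}=-i$). The key observation driving the inductive step is that $\varphi\in\dom(P_0^{n-1})$ forces $\varphi^{(k)}(0)=\varphi^{(k)}(\ell)=0$ for $k=0,1,\dots,n-2$ (by the description of $\dom(P_0^{n-1})$ recorded in Proposition \ref{squaremoments}), while $\varphi\in H^n(\mathcal{I})$ guarantees that every $\varphi^{(k)}$ with $k\le n-1$ is again absolutely continuous with $\varphi^{(k+1)}\in L^2(\mathcal{I})$, i.e.\ lies in $H(\mathcal{I})$, so that (a) may be applied to it. Applying (a) in turn to $\varphi,\varphi',\dots,\varphi^{(n-2)}$, all boundary terms vanish and the relations telescope to $[\mathcal{F}\varphi^{(n-1)}](x)=(ix)^{n-1}[\mathcal{F}\varphi](x)$. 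A final application of (a) to $\varphi^{(n-1)}$, whose endpoint values need not vanish, then gives $[\mathcal{F}\varphi^{(n)}](x)=(ix)^{n}[\mathcal{F}\varphi](x)+\frac{1}{\sqrt{2\pi}}[\varphi^{(n-1)}(\ell)e^{-ix\ell}-\varphi^{(n-1)}(0)]$, and solving this for $x^n[\mathcal{F}\varphi](x)$ via $i^{-n}=(-i)^n$ yields (b).

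Since the computation is essentially routine, the single place I would treat with care --- and the nearest thing to an obstacle --- is the bookkeeping of the multiplicative constant and the sign of the surviving boundary term: passing the boundary term to the other side and dividing by $(ix)^n$ scales it by $-(-i)^n$, so this constant must be checked carefully against the stated coefficient. The two substantive hypotheses to track throughout are (i) the availability of absolute continuity at each differentiation level, which licenses each successive integration by parts, and (ii) the vanishing of exactly the derivatives of order $\le n-2$ at the endpoints, which is what makes the intermediate boundary terms drop out and leaves only the order-$(n-1)$ boundary data in the final identity.
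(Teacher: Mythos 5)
Your argument is exactly the paper's: the published proof of this lemma consists of the single line ``straightforward application of absolute continuity and integration-by-parts,'' and your integration by parts for (a) followed by the telescoping induction for (b), using the vanishing of $\varphi^{(k)}(0),\varphi^{(k)}(\ell)$ for $k\le n-2$ to kill the intermediate boundary terms, is the correct way to fill that in. On the one point you flagged: your bookkeeping is right, and carrying it through gives the boundary coefficient $-(-i)^n=(-1)^{n+1}i^n$, which agrees with the printed $i^n$ only for odd $n$; for even $n$ the statement as printed is off by a sign. (Concretely, for $\ell=1$, $n=2$, $\varphi(t)=t(1-t)$ one computes directly that $x^2[\mathcal{F}\varphi](x)=-[\mathcal{F}\varphi''](x)+\tfrac{1}{\sqrt{2\pi}}\left[\varphi'(1)e^{-ix}-\varphi'(0)\right]$, i.e.\ the boundary term enters with coefficient $+1=-(-i)^2$ rather than $i^2=-1$.) This discrepancy is harmless where the lemma is used, since Proposition \ref{moments} only needs to decide whether the boundary term is integrable against $\overline{[\mathcal{F}\psi]}$, which is insensitive to a nonzero constant; but the coefficient your derivation produces, not the printed one, is the correct one, so do not force your computation to match the displayed formula.
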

\begin{proof} Straightforward application of absolute continuity and integration-by-parts.\end{proof}

\begin{lemma}\label{apulemma2}
Let $a,b\in \mathbb{C}$. Then $x\mapsto
F_\psi(x):=\overline{[\mathcal{F}\psi](x)}[ae^{-i\ell x}-b]$ is
Lebesgue-integrable over $\mathbb{R}$ for all $\psi\in
L^2(\mathcal{I})$, if and only if $a=b=0$.
\end{lemma}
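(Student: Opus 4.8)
The easy implication is immediate: if $a=b=0$ then $F_\psi\equiv 0$, which is integrable for every $\psi$. For the converse I would not try to handle all $\psi$ simultaneously, but instead exhibit a single explicit test function for which integrability already forces $a=b=0$. The plan is to take $\psi$ to be the constant function $1$ on $\mathcal{I}=[0,\ell]$, which certainly lies in $L^2(\mathcal{I})$, and to compute $F_\psi$ in closed form.

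First I would record that
$$
(\mathcal{F}\psi)(x)=\frac{1}{\sqrt{2\pi}}\int_0^\ell e^{-ixt}\,dt=\frac{1}{\sqrt{2\pi}}\cdot\frac{1-e^{-i\ell x}}{ix},
$$
so that, expanding $(e^{i\ell x}-1)(ae^{-i\ell x}-b)$,
$$
F_\psi(x)=\overline{(\mathcal{F}\psi)(x)}\,[ae^{-i\ell x}-b]=\frac{1}{\sqrt{2\pi}}\cdot\frac{N(x)}{ix},\qquad N(x):=(a+b)-be^{i\ell x}-ae^{-i\ell x}.
$$
Since $\mathcal{F}\psi$ is continuous and bounded (as noted just before the lemma), $F_\psi$ is continuous at $x=0$, so the only issue is integrability at $\pm\infty$, where $|F_\psi(x)|=\frac{1}{\sqrt{2\pi}}\,|N(x)|/|x|$.

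It then remains to show that $\int_{\mathbb{R}}|N(x)|/|x|\,dx=\infty$ whenever $(a,b)\neq(0,0)$. The function $N$ is continuous and periodic with period $T=2\pi/\ell$, and it is \emph{not} identically zero unless $a=b=0$, because the three functions $1,e^{i\ell x},e^{-i\ell x}$ are linearly independent, so the coefficients $a+b$, $-b$ and $-a$ vanish simultaneously only in that case. Hence $M:=\int_0^T|N(x)|\,dx>0$, and splitting over successive periods gives
$$
\int_{\mathbb{R}}\frac{|N(x)|}{|x|}\,dx\geq\sum_{k\geq 1}\int_{kT}^{(k+1)T}\frac{|N(x)|}{x}\,dx\geq\sum_{k\geq 1}\frac{1}{(k+1)T}\int_{kT}^{(k+1)T}|N(x)|\,dx=\frac{M}{T}\sum_{k\geq 1}\frac{1}{k+1}=\infty,
$$
by comparison with the harmonic series. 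This contradicts the assumed integrability of $F_\psi$, whence $a=b=0$.

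The only genuine obstacle here is the final divergence estimate, and in particular the observation that for \emph{Lebesgue} integrability the oscillation of $N$ is irrelevant --- only $|N|$ matters --- so that positivity of the per-period mean of $|N|$ (equivalently $N\not\equiv 0$) is exactly the property that fails precisely when $(a,b)=(0,0)$. Once this is isolated, the comparison with $\sum 1/k$ is routine. Working with the constant test function rather than a general $\psi$ is what keeps the expressions for $\mathcal{F}\psi$ and $N$ fully explicit and avoids any asymptotic analysis.
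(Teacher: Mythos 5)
Your proof is correct. The forward direction is trivial, and for the converse your computation of $F_\psi$ for the constant test function $\psi\equiv 1$ is accurate: $F_\psi(x)=(2\pi)^{-1/2}N(x)/(ix)$ with $N(x)=(a+b)-be^{i\ell x}-ae^{-i\ell x}$, the singularity at $0$ is removable since $N(0)=0$, and the per-period averaging argument correctly shows that $\int|N(x)|/|x|\,dx=\infty$ whenever $N\not\equiv 0$, which by linear independence of the characters $1,e^{i\ell x},e^{-i\ell x}$ happens exactly when $(a,b)\neq(0,0)$.

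Your route differs from the paper's in the key step. The paper also tests against exponentials $\psi_\theta(t)=e^{-i\theta t}$, but it splits into cases: when $|a|\neq|b|$ it uses the constant function together with the pointwise lower bound $|ae^{-i\ell x}-b|\geq\bigl||a|-|b|\bigr|$, reducing to the non-integrability of $|\sin(\ell x/2)|/|x|$; when $|a|=|b|\neq 0$ that bound is useless, so it switches to a $\theta$ chosen to align the phases ($a=-e^{-i\theta\ell}b$), which collapses $F_{\psi_\theta}$ to a multiple of $\sin((x+\theta)\ell)/(x+\theta)$. Your argument replaces these pointwise estimates by the single observation that a continuous periodic function with $N\not\equiv 0$ has positive mean of $|N|$ over a period, so $|N(x)|/|x|$ diverges by comparison with the harmonic series. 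This buys a unified, case-free proof using only the constant test function (indeed, in the problematic case $a=-b$ your $N(x)=-2ib\sin(\ell x)$ already works); what the paper's version buys is completely explicit elementary lower bounds in each case, at the cost of the case split and the cleverly chosen $\theta$. Both are valid; yours is arguably the cleaner argument.
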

\begin{proof}
We only need to consider functions $\psi_\theta\in
L^2(\mathcal{I})$, where $\psi_\theta(t):=e^{-i\theta t}$, with
$\theta\in \mathbb{R}$. Then $$ F_{\psi_\theta}(x)=\frac{\big[e^{i
(x+\theta) \ell}-1\big]\big[e^{-i x \ell}a-b\big]}{i(x+\theta)
\sqrt{2\pi}}. $$ If $|a|\neq |b|$, then $|F_{\psi_0}(x)|\geq 2|\sin
(x\ell /2)| \alpha/(i |x| \sqrt{2\pi})$, with $\alpha = \big||a|-
|b|\big|$, so $F_{\psi_0}$ is not integrable. If $|a|=|b|$, take
$\theta$ so that $a =-e^{-i\theta \ell}b$. Then $
F_{\varphi_\theta}(x) =
-2b\sin((x+\theta)\ell)/(\sqrt{2\pi}(x+\theta))$, which is again
not integrable. The only remaining possibility is $a=b=0$, and
then $F_\psi=0$ is trivially integrable.
\end{proof}

\begin{proposition}\label{moments} $\Po [n] =\tilde\Po[n] = P_0^n$ for all $n=1,2,\ldots$.
\end{proposition}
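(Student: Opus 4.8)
The plan is to prove the two equalities by combining the already-established fact $\tilde\Po[n]=P_0^n$ (Proposition \ref{squaremoments}) with the general inclusion $\tilde\Po[n]\subseteq\Po[n]$. Since both operators act as $(-i)^n d^n/dx^n$ on their domains, and $\dom(\Po[n])\subseteq H^n(\cal I)$ has already been recorded, everything reduces to the reverse inclusion $\dom(\Po[n])\subseteq\dom(P_0^n)$, i.e.\ to showing that every $\varphi\in\dom(\Po[n])$ satisfies the boundary conditions $\varphi^{(k)}(0)=\varphi^{(k)}(\ell)=0$ for $k=0,1,\ldots,n-1$. To obtain a usable description of $\dom(\Po[n])$, I would first rewrite the scalar measure $\Po_{\psi,\varphi}$: since $\Po(Y)=U^*\Po_\R(Y)U=U^*\cal F^*\Qo_\R(Y)\cal FU$, it has Lebesgue density $\overline{(\cal F\psi)(x)}(\cal F\varphi)(x)$ (with $\cal F$ abbreviating $\cal FU$ as in the footnote). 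Because $L^2(\cal I)$ is reflexive, Proposition \ref{strongintRefl} then says that $\varphi\in\dom(\Po[n])$ precisely when $x\mapsto x^n\,\overline{(\cal F\psi)(x)}(\cal F\varphi)(x)$ is Lebesgue-integrable for \emph{every} $\psi\in L^2(\cal I)$.

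I would then argue by induction on $n$, the case $n=0$ being trivial (with $P_0^0=I$). The first step of the inductive passage is the observation that $\dom(\Po[n])\subseteq\dom(\Po[n-1])$: indeed $|\Po_{\psi,\varphi}|$ is a finite measure, so $\int_{|x|\le 1}|x|^{n-1}\,d|\Po_{\psi,\varphi}|$ is bounded by its total mass, while $\int_{|x|>1}|x|^{n-1}\,d|\Po_{\psi,\varphi}|\le\int|x|^n\,d|\Po_{\psi,\varphi}|<\infty$. Consequently, by the inductive hypothesis $\dom(\Po[n-1])=\dom(P_0^{n-1})$, any $\varphi\in\dom(\Po[n])$ already lies in $\dom(P_0^{n-1})$, so the lower-order boundary conditions hold and $\varphi\in H^n(\cal I)\cap\dom(P_0^{n-1})$ — exactly the hypothesis needed to apply Lemma \ref{apulemma1}(b).

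Finally, substituting the identity of Lemma \ref{apulemma1}(b), namely $x^n[\cal F\varphi](x)=(-i)^n[\cal F\varphi^{(n)}](x)+\tfrac{i^n}{\sqrt{2\pi}}[\varphi^{(n-1)}(\ell)e^{-i\ell x}-\varphi^{(n-1)}(0)]$, I would split the integrand. Since $\varphi^{(n)}\in L^2(\cal I)$, the term $\overline{(\cal F\psi)}\,(-i)^n(\cal F\varphi^{(n)})$ is a product of two $L^2(\R)$ functions and hence always integrable; thus integrability for all $\psi$ is equivalent to integrability of the boundary term $\overline{(\cal F\psi)(x)}[\varphi^{(n-1)}(\ell)e^{-i\ell x}-\varphi^{(n-1)}(0)]$ for all $\psi$. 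Lemma \ref{apulemma2} (with $a=\varphi^{(n-1)}(\ell)$, $b=\varphi^{(n-1)}(0)$) then forces $\varphi^{(n-1)}(\ell)=\varphi^{(n-1)}(0)=0$, which together with the inductive boundary conditions yields $\varphi\in\dom(P_0^n)$ and closes the induction. The conceptual heart of the argument — and the place where the proof genuinely uses that $\Po[n]$ is a \emph{strong} integral — is this last step: it is precisely the freedom to test against \emph{every} $\psi\in L^2(\cal I)$ that, through Lemma \ref{apulemma2}, annihilates the boundary data. The only real bookkeeping obstacle is organizing the induction so that the lower-order boundary conditions are in hand before Lemma \ref{apulemma1}(b) is invoked.
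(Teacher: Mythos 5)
Your proposal is correct and follows essentially the same route as the paper's own proof: reduce to the inclusion $\dom(\Po[n])\subseteq\dom(P_0^n)$, induct on $n$ using the finiteness of the measures to get $\dom(\Po[n])\subseteq\dom(\Po[n-1])$, and then combine the integration-by-parts identity of Lemma \ref{apulemma1}(b) with Lemma \ref{apulemma2} to kill the boundary terms. The only cosmetic differences are that you start the induction at $n=0$ rather than treating $n=1$ separately and that you make the appeal to reflexivity (Proposition \ref{strongintRefl}) explicit where the paper leaves it implicit.
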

\begin{proof} We have already noted that $\Po [n]$ is a restriction of 
$(-i)^n
{d^n}/{dx^n}:H^n(\mathcal{I})\to L^2(\mathcal{I})$. In particular,
$\dom(\Po [n])\subseteq H^n(\mathcal{I})$.
Thus, we only need to show that the vectors in
$\dom(\Po [n])$ are exactly those elements
of $H^n(\cal I)$ which satisfy the boundary conditions defining
$\dom(P_0^n)$. Proceeding by induction, we first consider the case
$n=1$. Using Lemma \ref{apulemma1}, we get
\begin{equation}\label{firststep}
x \overline{[\mathcal{F}\psi](x)}[\mathcal{F}\varphi](x) = -i
\overline{[\mathcal{F}\psi](x)}[\mathcal{F}\varphi'](x)+\frac{i}{\sqrt{2\pi}}\overline{[\mathcal{F}\psi](x)}[\varphi(\ell)e^{-i\ell
x}-\varphi(0)],
\end{equation}
for $\psi\in L^2(\mathcal{I})$ and $\varphi\in H(\mathcal{I})$. By
definition, $\varphi\in \dom(\Po [1])$ if
and only if $x\mapsto
\overline{[\mathcal{F}\psi](x)}[\mathcal{F}\varphi](x)$ is
integrable over $\mathbb{R}$ for all $\psi\in L^2(\mathcal{I})$.
Since $\varphi'\in L^2(\mathcal{I})$, so that both
$\mathcal{F}\psi$ and $\mathcal{F}\varphi'$ are in
$L^2(\mathbb{R})$, the first term in the right hand side of
\eqref{firststep} is integrable in any case. Hence $\varphi\in
\dom(\Po [1])$ if and only if the second
term is integrable for all $\psi\in L^2(\mathcal{I})$. But by
Lemma \ref{apulemma2}, this happens exactly when
$\varphi(0)=\varphi(\ell)=0$, i.e.\ $\varphi\in \dom(P_0)$. Thus,
$\Po [1]=P_0$. Now we assume inductively
that $\Po [n-1]=P_0^{n-1}$. Since
$|x^{n-1}|\leq 1+|x^n|$ for all $x\in \mathbb{R}$, and the
relevant complex measures are finite, it follows that
$\Po [n]\subseteq
\Po [n-1]=P_0^{n-1}$, where the last
equality follows from the induction assumption. Hence,
$\dom(\Po [n])\subseteq H^n(\mathcal{I})\cap
\dom(P_0^{n-1})$. Letting $\varphi\in H^n(\mathcal{I})\cap
\dom(P_0^{n-1})$ we get from Lemma \ref{apulemma1} (b) that

\begin{align*}
x^n \overline{[\mathcal{F}\psi](x)}[\mathcal{F}\varphi](x) =
(-i)^n
\overline{[\mathcal{F}\psi](x)}[\mathcal{F}\varphi^{(n)}](x)
+\frac{i^n}{\sqrt{2\pi}}\overline{[\mathcal{F}\psi](x)}[\varphi^{(n-1)}(\ell)e^{-i\ell
x}-\varphi^{(n-1)}(0)]
\end{align*}

for all $\psi\in L^2(\mathcal{I})$. Since now $\varphi^{(n)}\in
L^2(\mathcal{I})$ (because $\varphi\in H^n(\mathcal{I})$), we can
again use the same argument as before to conclude by Lemma
\ref{apulemma2} that $\varphi\in
\dom(\Po [n])$ if and only if
$\varphi^{(n-1)}(\ell )=\varphi^{(n-1)}(0)=0$, i.e.\ $\varphi\in
\dom(P_0^n)$. The proof is complete.
\end{proof}
For the weak operator integral
$\Po'[n]$, the following result
holds.

\begin{proposition}\label{weaks} $\Po'[1]=P_0$ and $\Po'[2n]=(P_0^n)^*P_0^n$.
\end{proposition}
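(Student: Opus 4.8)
The plan is to treat the two assertions separately, disposing of $\Po'[2n]$ quickly with the general machinery and reserving the real work for $\Po'[1]$. Throughout I write $[\cal F\vp]$ for $\cal F U\vp$ as in the Appendix, so that $\Po_{\psi,\vp}(X)=\int_X\overline{[\cal F\psi]}\,[\cal F\vp]\,dx$ and hence $\int x\,d\Po_{\psi,\vp}=\int_\R x\,\overline{[\cal F\psi]}\,[\cal F\vp]\,dx$, while $\formdom{x^k}{\Po}=\{\vp\mid\int_\R|x|^k|[\cal F\vp]|^2\,dx<\infty\}$.

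For $\Po'[2n]$ I would invoke Proposition \ref{trivialprop}(b) with $j=n$, $\F=\Po_\R$ and $V=U$. Its hypothesis $\F[n]\big(\dom(\F[n])\cap U(L^2(\cal I))\big)\subseteq U(L^2(\cal I))$ is exactly the range-stability observed in the proof of Proposition \ref{squaremoments} (a function vanishing outside $\cal I$ has all its derivatives vanishing there as well), and the standing density requirement holds because $\formdom{x^{2n}}{\Po}=\dom(\tilde\Po[n])=\dom(P_0^n)$ contains $C_c^\infty((0,\ell))$. Proposition \ref{trivialprop}(b) then gives $\Po'[2n]=\tilde\Po[n]^*\tilde\Po[n]$, and since $\tilde\Po[n]=P_0^n$ by Proposition \ref{squaremoments}, this is precisely $(P_0^n)^*P_0^n$.

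For $\Po'[1]$ the strategy is the two-sided inclusion $P_0\subseteq\Po'[1]\subseteq P_0$. The forward inclusion is immediate: $\Po$ is a POVM and $\formdom{x}{\Po}$ is dense (it contains $\dom(P_0)$), so the general relation $\strgop{x}{\Po}\subseteq\mweaksymop{x}{\Po}=\Po'[1]$ applies, and $\strgop{x}{\Po}=\Po[1]=P_0$ by Proposition \ref{moments}. For the reverse, fix $\vp\in\dom(\Po'[1])$ and set $\eta:=\Po'[1]\vp$, so $\langle\psi\mid\eta\rangle=\int_\R x\,\overline{[\cal F\psi]}\,[\cal F\vp]\,dx$ for all $\psi\in\formdom{x}{\Po}$. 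First I would test against $\psi\in C_c^\infty((0,\ell))\subseteq\formdom{x}{\Po}$: for such $\psi$ Lemma \ref{apulemma1}(a) has no boundary term, so $x[\cal F\psi]=[\cal F(-i\psi')]$ and hence $\langle\psi\mid\eta\rangle=\langle-i\psi'\mid\vp\rangle$. This says precisely that $\eta=-i\vp'$ distributionally, so $\vp'=i\eta\in L^2(\cal I)$, i.e.\ $\vp\in H(\cal I)$ and $\Po'[1]\vp=-i\vp'$. It then remains to force the boundary conditions $\vp(0)=\vp(\ell)=0$ for any $\vp\in H(\cal I)\cap\formdom{x}{\Po}$. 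Here I would apply Lemma \ref{apulemma1}(a) to write $x[\cal F\vp]=-i[\cal F\vp']+\tfrac{i}{\sqrt{2\pi}}g$ with $g(x)=\vp(\ell)e^{-i\ell x}-\vp(0)$, and rearrange to $\tfrac{i}{\sqrt{2\pi}}g=x[\cal F\vp]+i[\cal F\vp']$. Using $|p+q|^2\le 2|p|^2+2|q|^2$ and $\cal F\vp'\in L^2(\R)$ bounds $\int_{|x|>1}|g|^2/|x|\,dx$ by a constant times $\int_\R|x|\,|[\cal F\vp]|^2\,dx+\|\cal F\vp'\|_2^2<\infty$; on the other hand $|g|^2$ has mean value $|\vp(\ell)|^2+|\vp(0)|^2$ and the oscillatory cross term gives a convergent integral, so $\int_{|x|>1}|g|^2/|x|\,dx$ diverges logarithmically unless the boundary values both vanish. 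This contradiction yields $\vp\in\dom(P_0)$, completing $\Po'[1]\subseteq P_0$.

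The main obstacle is exactly this last step. In the strong case (Proposition \ref{moments}) the boundary conditions fall out of Lemma \ref{apulemma2} because there the test vector $\psi$ ranges over all of $L^2(\cal I)$, including the functions $\psi_\theta$ used to prove that lemma; for the weak integral $\Po'[1]$ the admissible test vectors are confined to the smaller space $\formdom{x}{\Po}$, which excludes those $\psi_\theta$, so Lemma \ref{apulemma2} cannot be quoted directly. The remedy is to read the boundary information off the membership condition $\int_\R|x|\,|[\cal F\vp]|^2\,dx<\infty$ imposed on $\vp$ itself — via the forced asymptotics $[\cal F\vp](x)\sim\tfrac{i}{\sqrt{2\pi}}\,g(x)/x$ at infinity — rather than off an integrability condition tested against arbitrary $\psi$; this is the delicate point that makes the weak first moment require an argument genuinely different from the strong one.
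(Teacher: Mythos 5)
Your proposal is correct, and the two halves compare differently with the paper. For $\Po'[2n]$ you argue exactly as the paper does: Proposition \ref{trivialprop}(b) applied to the dilation $(L^2(\R),\Po_\R,U)$, with range-stability of $U(L^2(\cal I))$ under differentiation and $\tilde\Po[n]=P_0^n$ from Proposition \ref{squaremoments}. For $\Po'[1]$, however, you take a genuinely different route. The paper observes that $\Po'[1]$ is a symmetric operator squeezed between $P_0$ and $P_0^*$, invokes the deficiency-index $(1,1)$ classification to conclude it is either $P_0$ or one of the selfadjoint extensions $P^{(\theta)}$, and rules out each $P^{(\theta)}$ by exhibiting a single explicit affine function $\varphi_{a,b}$ in $\dom(P^{(\theta)})$ whose first absolute moment $\int |x|\,|(\cal F\varphi_{a,b})(x)|^2dx$ diverges, so that $\varphi_{a,b}\notin\formdom{x}{\Po}$. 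You instead prove the reverse inclusion directly: a distributional test against $C_c^\infty((0,\ell))$ identifies the action as $-i\,d/dx$ and places $\dom(\Po'[1])$ inside $H(\cal I)$, and then the rearranged identity $\tfrac{i}{\sqrt{2\pi}}\,[\vp(\ell)e^{-i\ell x}-\vp(0)]=x[\cal F\vp](x)+i[\cal F\vp'](x)$ together with $\int|x|\,|[\cal F\vp]|^2dx<\infty$ forces $\int_{|x|>1}|g|^2/|x|\,dx<\infty$, which the logarithmic divergence of the mean of $|g|^2$ rules out unless $\vp(0)=\vp(\ell)=0$. Both arguments rest on the same analytic phenomenon (a nonzero boundary term makes $[\cal F\vp]$ decay only like $1/x$, incompatible with membership in $\formdom{x}{\Po}$), but yours avoids the extension-theoretic classification entirely and in fact proves the stronger statement $H(\cal I)\cap\formdom{x}{\Po}=\dom(P_0)$, at the cost of the extra distributional step; the paper's version needs the divergence computation only for explicit linear functions but leans on the specific structure of the symmetric extensions of $P_0$. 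Your closing diagnosis of why Lemma \ref{apulemma2} cannot be quoted verbatim (the test functions $\psi_\theta$ used to prove it lie outside $\formdom{x}{\Po}$) is accurate and worth keeping.
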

\begin{proof} The last statement follows from Propositions \ref{trivialprop} (b) and \ref{squaremoments}, because the derivative of a function with support in $\cal I$ also has support in $\cal I$ (this was already mentioned in the proof of Proposition \ref{squaremoments}).

To prove that $\Po'[1]=P_0$,
recall first that $\Po'[1]$ is a
symmetric extension of
$\tilde\Po[1]$, and hence coincides
with one of the selfadjoint extensions\footnote{Selfadjoint extensions of $P_0$ are $-id/dx$ on the domains given by the boundary conditions $\psi(\ell)=e^{i\theta}\psi(0)$.} $P^{(\theta)}$ of $P_0$, or $P_0$
itself. We show that
$\dom(\Po'[1])$ is a proper
subspace of $\dom(P^{(\theta)})$, so that
$\Po'[1]=P_0$ must hold. For any
$a,b\in \mathbb{C}$, define
$\varphi_{a,b}:\,\mathbb{R}\to\mathbb{C}$ via
$\varphi_{a}^{b}(t):=(b-a)t/\ell+a$. This is obviously infinitely
differentiable, and satisfies the boundary conditions
$\varphi_{a,b}(0)=a$ and $\varphi_{a,b}(\ell)=b$, so for a
suitable choice of the two constants, the vector $\varphi_{a,b}$
will belong to the domain of a given $P^{(\theta)}$. We will show
that it does not belong to the form domain
$D_0(x,\Po )$ (which is even larger than the
domain of $\Po'[1]$), unless
$a=b=0$. In order to prove this, it suffices to show that $xG(x)$
is not integrable over $[1,\infty)$, where $G:\mathbb{R}\to
\mathbb{C}$ is the density of the measure
$\Po _{\varphi_{a,b},\varphi_{a,b}}$, i.e.
$G(x):= |(\mathcal{F}\varphi_{a,b})(x)|^2 =
(2\pi)^{-1}\left|\int_0^\ell e^{-ixt}\varphi_{a,b}(t)dt\right|^2$.
Now in case $a=b\neq 0$, we have simply $xG(x) =
2|a|^2[1-\cos(x\ell)]/(2\pi x)$, which is not integrable. In case
$a\neq b$, we put $a':= (b-a) \ell^{-1}\neq 0$, $b':=a/a'$; then
we get $2\pi|a|^{-2} xG(x) = h(x) +x^{-2}(f(x)+x^{-1} g(x))$,
where $h(x) :=x^{-1} |\ell+b'-b' e^{i x \ell}|^2$, and $f$ and $g$
are bounded real functions. Now $\int xG(x)\,  dx =\infty$ is
equivalent to $\int_1^\infty h(x) \, dx =\infty$, which is true
because $h(x) \ge\frac{(|\ell+b|-|b|)^2}{x}$ in case $|\ell+b|\neq
|b|$, while $h(x)=2|b|^2x^{-1}[1-\cos(x\ell+\beta)]$ for some
$\beta\in [0,2\pi)$ in case $|\ell+b|=|b|$. The proof is complete.
\end{proof}

\begin{remark}\rm It is interesting to compare the domains of the differential operators
$\tilde\Po[2n]=\Po [2n]$
and $\Po'[2n]$, both acting as
restrictions of the maximal operator $(-1)^n{d^{2n}}/{dx^{2n}}$,
and thereby differing only by boundary conditions. Explicitly, we
have

\begin{eqnarray*}
\dom(\Po [2n])&=& \dom(P_0^{2n}) = \SSet{
\varphi \in H^{2n}(\mathcal{I})}{
\varphi^{(k)}(0)=\varphi^{(k)}(\ell)=0,\; k=0,1,\ldots, 2n-1};\\
\dom(\Po'[2n])&=&\dom
((P_0^n)^*P_0^n) = \SSet{ \varphi \in H^{2n}(\mathcal{I})}{
\varphi^{(k)}(0)=\varphi^{(k)}(\ell)=0,\; k=0,\ldots, n-1}.
\end{eqnarray*}

(To obtain the last equality, recall that $\dom((P_0^n)^*) =
H^{n}(\mathcal{I})$.) Hence, in the case of even index, the weak
moment operator integral differs from the strong one in that half
of the boundary conditions are removed. Note also that
$\Po'[2n]$ is selfadjoint, because
$P_0^n$ is closed. However, as the example
$\Po'[1]=P_0$ shows, odd moments
need not be.
\end{remark}

\noindent{\bf Acknowledgments.} 
We acknowledge financial support from the CHIST-ERA/BMBF project CQC, and the Academy of Finland grant no 138135.

\end{document}